\newcommand{\pFq}[5]{\ensuremath{{}_{#1}F_{#2} \left( \genfrac{}{}{0pt}{}{#3}
{#4} \bigg| {#5} \right)}}
\newcommand{\df}{\dfrac}
\newcommand{\tf}{\tfrac}
 \renewcommand{\a}{\alpha}
\renewcommand{\b}{\beta}
\renewcommand{\d}{{\delta}}
\newcommand{\g}{\gamma}
\newcommand{\G}{\Gamma}
\renewcommand{\l}{\lambda}
\renewcommand{\(}{\left\(}
\renewcommand{\)}{\right\)}
\renewcommand{\[}{\left\[}
\renewcommand{\]}{\right\]}
\let\dotlessi=\i
\let\dotlessi=\i
\renewcommand{\pmod}[1]{\,(\textup{mod}\,#1)}
\numberwithin{equation}{section}
 \theoremstyle{plain}
\newtheorem{theorem}{Theorem}[section]
\newtheorem{corollary}[theorem]{Corollary}
\newtheorem{remark}[theorem]{Remark}
\def\proof{\@ifnextchar[{\@oproof}{\@nproof}}
\def\@oproof[#1][#2]{\trivlist\item[\hskip\labelsep\textit{#2 Proof of\
#1.}~]\ignorespaces}
\def\@nproof{\trivlist\item[\hskip\labelsep\textit{Proof.}~]\ignorespaces}
\begin{document}
\title{Sums of squares and products of Bessel functions}
\author{Bruce C.~Berndt, Atul Dixit, Sun Kim, and Alexandru Zaharescu}
\address{Department of Mathematics, University of Illinois, 1409 West Green
Street, Urbana, IL 61801, USA} \email{berndt@illinois.edu}
\address{Department of Mathematics, Indian Institute of Technology Gandhinagar, Palaj, Gandhinagar 382355, Gujarat, India}\email{adixit@iitgn.ac.in}
\address{Department of Mathematics, University of Illinois, 1409 West Green
Street, Urbana, IL 61801, USA} \email{sunkim2@illinois.edu}
\address{Department of Mathematics, University of Illinois, 1409 West Green
Street, Urbana, IL 61801, USA and \newline
		Simion Stoilow Institute of Mathematics of the Romanian
		Academy, P.O. Box 1--764, RO--014700 Bucharest, Romania.} \email{zaharesc@illinois.edu}
\begin{abstract}
Let $r_k(n)$ denote the number of representations of the positive integer $n$ as the sum of $k$ squares. We rigorously prove for the first time a Vorono\"{\dotlessi} summation formula for $r_k(n), k\geq2,$ proved incorrectly by A. I. Popov and later rediscovered by A. P. Guinand, but without proof and without conditions on the functions associated in the transformation. Using this summation formula we establish a new transformation between a series consisting of $r_k(n)$ and a product of two Bessel functions, and a series involving $r_k(n)$ and the Gaussian hypergeometric function. This transformation can be considered as a massive generalization of well-known results of G. H. Hardy, and of A. L. Dixon and W. L. Ferrar, as well as of a classical result of A. I. Popov that was completely forgotten. An analytic continuation of this transformation yields further useful results that generalize those obtained earlier by Dixon and Ferrar.
\end{abstract}
\subjclass[2010]{Primary: 11E25; Secondary: 33C10, 30B40}

\keywords{sums of squares, Bessel functions, Vorono\"{\dotlessi} summation formula, analytic continuation}

\maketitle

\section{Introduction}
Infinite series involving arithmetic functions and Bessel functions are instrumental in studying some notoriously difficult problems in analytic number theory, for example, the circle and the divisor problems. As mentioned by G. H. Hardy \cite[p.~266]{hardyqjpam1915}, S.~Wigert \cite{wigert} was the first mathematician to recognize the importance of series of Bessel functions in analytic number theory. Since then, several mathematicians have studied, and continue to study, such series, for example, with the point of view of understanding and improving the order of magnitude of error terms associated with the summatory functions of certain arithmetic functions. A prime tool in making the connection between a summatory function and certain series of Bessel functions is the Vorono\"{\dotlessi} summation formula associated with the corresponding arithmetic function.

Let $r_k(n)$ denote the number of representations of a positive integer $n$ as the sum of $k$ squares, where different signs and different orders of the summands give distinct representations. The ordinary Bessel function $J_{\nu}(z)$ of order $\nu$ is defined by \cite[p.~40]{watson-1966a}
\begin{align}\label{sumbesselj}
	J_{\nu}(z):=\sum_{m=0}^{\infty}\frac{(-1)^m(z/2)^{2m+\nu}}{m!\Gamma(m+1+\nu)}, \quad |z|<\infty.
	\end{align}
We record the Vorono\"{\dotlessi} summation formula associated with $r_2(n)$, sometimes known as the Hardy--Landau summation formula, in the form given in \cite[p.~274]{landau} (or \cite[Thm.~A]{dixfer2}).

\begin{theorem}
If $0\leq\a<\b$ and $h(y)$ is real and of bounded variation in $(\a,\b)$, then
\begin{equation}\label{hlsf}
\sum_{\a\leq n\leq\b}r_2(n)\tfrac{1}{2}(h(n-0)+h(n+0))=\pi\sum_{n=0}^{\infty}r_2(n)\int_{\a}^{\b}h(y)J_{0}(2\pi\sqrt{ny})\, dy,
\end{equation}
where, if $n=\b$, the coefficient of $r_2(\b)$ is taken to be  $\tfrac{1}{2}h(\b-0)$; if $n=\a\neq 0$, the coefficient of $r_2(\a)$ is taken to be $\tfrac{1}{2}h(\a+0)$; and if $n=\a=0$, the coefficient of $r_2(0):=1$ is taken to be $h(0+)$.
\end{theorem}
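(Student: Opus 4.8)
The plan is to reduce the formula to the known Bessel-function expansion of the summatory function of $r_2(n)$ and then transfer that expansion onto $h$ by Riemann--Stieltjes integration by parts. Write $A(t):=\sum_{0\le n\le t}{}' r_2(n)$, where the prime indicates that $r_2(t)$ is halved when $t$ is a positive integer, so that $A$ is the normalized counting function for lattice points in a disk. Hardy's identity---itself a consequence of two-dimensional Poisson summation, which is also the origin of the $J_0$-kernel through the Hankel transform of a radial function---gives
\begin{equation}
A(t)=\pi t+R(t),\qquad R(t)=\sum_{n=1}^{\infty}r_2(n)\Big(\frac{t}{n}\Big)^{1/2}J_1(2\pi\sqrt{nt}),
\end{equation}
the series converging with uniformly bounded partial sums on compact subsets of $(0,\infty)$. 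By the jump structure of $A$, the left-hand side of \eqref{hlsf} is exactly the symmetric Riemann--Stieltjes integral $\int_{\alpha}^{\beta}h(t)\,dA(t)$: the averaging $\tfrac12(h(n-0)+h(n+0))$ at each integer, and the half-weights at the endpoints, are precisely what the symmetric Stieltjes integral assigns to a jump of $A$ that coincides with a discontinuity of $h$.

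The analytic engine is the differentiation identity $\frac{d}{dt}\big[t^{1/2}J_1(2\pi\sqrt{nt})\big]=\pi\sqrt{n}\,J_0(2\pi\sqrt{nt})$, which follows from $\frac{d}{dz}[zJ_1(z)]=zJ_0(z)$ and the chain rule. Rather than differentiate the expansion term by term (which would produce the only conditionally convergent $J_0$-series prematurely), I would integrate by parts,
\begin{equation}
\int_{\alpha}^{\beta}h\,dA=\big[h(t)A(t)\big]_{\alpha}^{\beta}-\int_{\alpha}^{\beta}A(t)\,dh(t),
\end{equation}
insert $A=\pi t+R$, interchange the now-legitimate summation with the finite signed measure $dh$, and then integrate each summand back by parts, using the identity above, so that $c_n(t):=r_2(n)(t/n)^{1/2}J_1(2\pi\sqrt{nt})$ contributes $\pi r_2(n)\int_{\alpha}^{\beta}h(t)J_0(2\pi\sqrt{nt})\,dt$ plus a boundary term. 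One checks that all boundary terms recombine: $[h(A-R)]_{\alpha}^{\beta}=\pi[th]_{\alpha}^{\beta}$ cancels against $-\pi\int t\,dh$ to leave $\pi\int_{\alpha}^{\beta}h\,dt$, which is exactly the $n=0$ term on the right of \eqref{hlsf} since $r_2(0)=1$ and $J_0(0)=1$. This yields the asserted identity.

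The main obstacle is this interchange of summation and Stieltjes integration, since the $J_0$-series on the right of \eqref{hlsf} is not absolutely convergent; here the bounded-variation hypothesis on $h$ is decisive, as it both controls the boundary terms and supplies a dominating finite measure $|dh|$, against which Hardy's uniform boundedness of the partial sums of $R$ permits passage to the limit. A secondary but delicate point is the bookkeeping at the endpoints: when $\alpha=0$ the jump of $A$ at the origin carries the full weight $r_2(0)=1$, there being no contribution from $t<0$, which is why the coefficient of $r_2(0)$ is $h(0+)$ rather than an average, while at an interior integer or at the right endpoint the symmetric convention restores the half-weights stated in the theorem.
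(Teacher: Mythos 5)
The paper does not actually prove this theorem: it is recorded as a known result, quoted directly from Landau's \emph{Vorlesungen} and from Dixon and Ferrar, so there is no in-paper argument to measure yours against. Your proposal is, in outline, the classical proof from that literature: start from Hardy's expansion \eqref{hqjpam1.25} of the normalized summatory function, writing $A(t)=\pi t+R(t)$ for $t>0$, interpret the left side of \eqref{hlsf} as a symmetric Stieltjes integral $\int_{\a}^{\b}h\,dA$, integrate by parts, push the finite measure $dh$ through the boundedly convergent series $R$ term by term, and integrate each term back using $\frac{d}{dt}\bigl[t^{1/2}J_1(2\pi\sqrt{nt})\bigr]=\pi\sqrt{n}\,J_0(2\pi\sqrt{nt})$. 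All of these ingredients are correct, the use of bounded variation to dominate the interchange is the right mechanism, and the recombination of boundary terms into the $n=0$ term works exactly as you describe when $\a>0$.

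The one place your outline genuinely undersells the difficulty is the endpoint $\a=0$. The representation $A(t)=\pi t+R(t)$ holds only for $t>0$; each term of $R$ vanishes at $t=0$ while $R(0^{+})=1$, so the convergence of Hardy's series is not uniform near the origin. Running your argument on $[\epsilon,\b]$ and letting $\epsilon\to0^{+}$, one finds that $\pi\sum_{n\ge1}r_2(n)\int_0^{\epsilon}h(t)J_0(2\pi\sqrt{nt})\,dt\to h(0+)\lim_{\epsilon\to0^{+}}R(\epsilon)=h(0+)$, so the limit of the sum and the sum of the limits differ by exactly $h(0+)$; it is this defect of uniformity, not a boundary term of the form $[h(A-R)]_{\a}^{\b}$, that accounts for the term $r_2(0)h(0+)$ on the left of \eqref{hlsf}. (A sanity check with $h=\mathbf{1}_{(0,x)}$, $x$ non-integral, shows both sides equal $\pi x+\sqrt{x}\sum_{n\ge1}r_2(n)n^{-1/2}J_1(2\pi\sqrt{nx})$, confirming that the $h(0+)$ on the left is absorbed in precisely this way.) So your sentence that ``all boundary terms recombine'' is true in the end but conceals the only genuinely delicate limit in the proof; with that point made explicit, your argument is complete and coincides with the standard one that the paper cites.
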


A.~L.~Dixon and W.~L.~Ferrar \cite[eqs.~(2.2), (2.3)]{dixfer2} extended this theorem to include the case when $\b=\infty$ and obtained the following result.

\begin{theorem}\label{hlinf}
If $h(y), h'(y)$ and $h''(y)$ are bounded in $(0,\infty)$, and are $O(\textup{exp}(-y^u))$ for $y$ large and $u>0$, then
\begin{equation}\label{r2vsfinf}
\sum_{n=0}^{\infty}r_2(n)h(n)=\pi\sum_{n=0}^{\infty}r_2(n)\int_{0}^{\infty}h(t)J_{0}(2\pi\sqrt{nt})\, dt.
\end{equation}
\end{theorem}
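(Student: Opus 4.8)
The plan is to derive \eqref{r2vsfinf} by letting $\beta=R\to\infty$ in the Hardy--Landau formula \eqref{hlsf} applied on $(\alpha,\beta)=(0,R)$. The hypotheses make $h$ twice differentiable, and in particular continuous, so $\tfrac12(h(n-0)+h(n+0))=h(n)$ and $h(0+)=h(0)$; letting $R\to\infty$ through non-integer values also disposes of the half-weight convention at the upper endpoint. The left side of \eqref{hlsf} then becomes $\sum_{0\le n\le R}r_2(n)h(n)$, and since $r_2(n)\ll_{\epsilon}n^{\epsilon}$ while $h(n)=O(\exp(-n^{u}))$, this converges absolutely to $\sum_{n=0}^{\infty}r_2(n)h(n)$ as $R\to\infty$. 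The whole difficulty is thus shifted to the right side: I must show that $\pi\sum_{n=0}^{\infty}r_2(n)\int_{0}^{R}h(y)J_{0}(2\pi\sqrt{ny})\,dy$ converges, as $R\to\infty$, to the corresponding series in which $\int_0^R$ is replaced by $\int_0^\infty$.

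First I would show that the target right-hand series converges absolutely, by estimating $I_n:=\int_{0}^{\infty}h(y)J_{0}(2\pi\sqrt{ny})\,dy$ for $n\ge1$. Starting from the identities $\frac{d}{dy}\left[\sqrt{y}\,J_{1}(2\pi\sqrt{ny})\right]=\pi\sqrt{n}\,J_{0}(2\pi\sqrt{ny})$ and $\frac{d}{dy}\left[y\,J_{2}(2\pi\sqrt{ny})\right]=\pi\sqrt{n}\,\sqrt{y}\,J_{1}(2\pi\sqrt{ny})$, two integrations by parts---whose boundary contributions at $0$ and $\infty$ vanish because of the power behaviour $J_{\nu}(z)=O(z^{\nu})$ as $z\to0$ and the rapid decay of $h$ and $h'$---give
\begin{equation*}
I_n=\frac{1}{\pi^{2}n}\int_{0}^{\infty}h''(y)\,y\,J_{2}(2\pi\sqrt{ny})\,dy.
\end{equation*}
Splitting this integral at $y=1/n$ and inserting $J_{2}(z)=O(z^{-1/2})$ for large $z$ yields $I_n=O(n^{-5/4})$, so $\pi\sum_{n}r_2(n)I_n$ converges absolutely because $\sum_{n}r_2(n)n^{-5/4}<\infty$.

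It remains to prove that the tail $T(R):=\pi\sum_{n=0}^{\infty}r_2(n)\int_{R}^{\infty}h(y)J_{0}(2\pi\sqrt{ny})\,dy\to0$, which I expect to be the main obstacle. The $n=0$ term equals $\pi\int_R^\infty h\,dy\to0$ and is set aside. For $n\ge1$ the same two integrations by parts on $[R,\infty)$ produce three pieces: a tail integral $\tfrac{1}{\pi^{2}n}\int_{R}^{\infty}h''(y)y\,J_{2}\,dy$, which is $O(n^{-5/4})$ times a factor $\int_{R}^{\infty}|h''|y^{3/4}\,dy\to0$ and so is harmless after summation; an $h'(R)$-boundary term dominated after summation by $|h'(R)|R^{3/4}\sum_{n}r_2(n)n^{-5/4}\to0$; and the delicate $h(R)$-boundary term $\tfrac{h(R)\sqrt{R}}{\pi}\sum_{n\ge1}\tfrac{r_2(n)}{\sqrt{n}}J_{1}(2\pi\sqrt{nR})$. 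This last series is only conditionally convergent---bounding $J_1$ trivially gives the divergent $\sum r_2(n)n^{-3/4}$---so the oscillation of $J_{1}$ in $n$ must be exploited rather than absolute values. To handle it I would invoke Hardy's classical circle identity (the very result generalized here), $\sum_{n\ge1}\tfrac{r_2(n)}{\sqrt{n}}J_{1}(2\pi\sqrt{nR})=P(R)/\sqrt{R}$ with $P(R)=\sum_{n\le R}'r_2(n)-\pi R$; alternatively this can be read off from \eqref{hlsf} itself by reapplying it on $[R,S]$ and letting $S\to\infty$, which merely transfers the endpoint difficulty to a harmless upper limit. Either way the stubborn term equals $\pm\,h(R)P(R)/\pi$, and the trivial discrepancy bound $P(R)=O(\sqrt{R})$ together with $h(R)=O(\exp(-R^{u}))$ forces it to $0$. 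Hence $T(R)\to0$, the interchange of limit and summation on the right is justified, and passing to the limit $R\to\infty$ in \eqref{hlsf} establishes \eqref{r2vsfinf}.
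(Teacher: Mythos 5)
This theorem is quoted in the paper from Dixon and Ferrar \cite[eqs.~(2.2), (2.3)]{dixfer2} without proof, so there is no in-paper argument to compare against; judged on its own, your reconstruction is correct and is in substance the classical Dixon--Ferrar route: pass to the limit $\b\to\infty$ in the Hardy--Landau formula \eqref{hlsf}. The key points are all present and sound. The double integration by parts using $\frac{d}{dy}[\sqrt{y}\,J_1(2\pi\sqrt{ny})]=\pi\sqrt{n}\,J_0(2\pi\sqrt{ny})$ and $\frac{d}{dy}[y\,J_2(2\pi\sqrt{ny})]=\pi\sqrt{n}\,\sqrt{y}\,J_1(2\pi\sqrt{ny})$ is exactly what the hypotheses on $h''$ are for, the bound $I_n=O(n^{-5/4})$ obtained by splitting at $y=1/n$ is right and, with \eqref{rknbd2}, gives absolute convergence of the limiting series, and you correctly identified that the only genuinely delicate piece of the tail is the $h(R)$-boundary term, which must be summed using the oscillation of $J_1$ via Hardy's identity \eqref{hqjpam1.25} rather than absolute values. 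Two small remarks. First, your constant in that term is slightly off: after multiplying by $\pi r_2(n)$ and summing, the piece is $-h(R)\sqrt{R}\sum_{n\ge1}r_2(n)n^{-1/2}J_1(2\pi\sqrt{nR})=-h(R)\bigl(P(R)+1\bigr)$ with no factor $1/\pi$; this is immaterial since $P(R)=O(\sqrt{R})$ by the elementary Gauss lattice-point bound and $h(R)$ decays faster than any power. Second, the alternative suggestion of deriving \eqref{hqjpam1.25} by ``reapplying \eqref{hlsf} on $[R,S]$ and letting $S\to\infty$'' is circular in spirit (it is the same endpoint problem you are trying to solve); stick with the citation of Hardy's identity, which the paper records independently. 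With those cosmetic fixes the argument is complete.
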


To state Dixon and Ferrar's application of Theorem \ref{hlinf}, we need to define two further Bessel functions. The modified Bessel function of the first kind of order $\nu$ is defined by \cite[p.~77]{watson-1966a}
	\begin{equation}\label{besseli}
I_{\nu}(z)=
\begin{cases}
e^{-\frac{1}{2}\pi\nu i}J_{\nu}(e^{\frac{1}{2}\pi i}z), & \text{if $-\pi<$ arg $z\leq\frac{\pi}{2}$,}\\
e^{\frac{3}{2}\pi\nu i}J_{\nu}(e^{-\frac{3}{2}\pi i}z), & \text{if $\frac{\pi}{2}<$ arg $z\leq \pi$,}
\end{cases}
\end{equation}
where $J_{\nu}(z)$ is the ordinary Bessel function of order $\nu$ defined in \eqref{sumbesselj}.
	The modified Bessel function of the second kind is defined by \cite[p.~78, eq.~(6)]{watson-1966a},
\begin{equation}\label{besselk}
K_{\nu}(z):=\frac{\pi}{2}\frac{I_{-\nu}(z)-I_{\nu}(z)}{\sin\nu\pi}.
\end{equation}

Using Theorem \ref{hlinf}, Dixon and Ferrar \cite[eq.~(3.12)]{dixfer2} showed that, for Re$(\sqrt{\b})>0$ and Re$(\nu)>0$,
\begin{equation}\label{r2dix}
\sum_{n=0}^{\infty}r_2(n)n^{\frac{\nu}{2}}K_{\nu}(2\pi\sqrt{n\b})=\frac{\b^{\frac{\nu}{2}}\G(\nu+1)}{2\pi^{\nu+1}}
\sum_{n=0}^{\infty}\frac{r_{2}(n)}{(n+\b)^{\nu+1}}.
\end{equation}
If we set $\nu=\tf12$ in \eqref{r2dix} and employ the formula \cite[p.~80, eq.~(13)]{watson-1966a}
\begin{equation}\label{kspl}
K_{\frac{1}{2}}(z)=\sqrt{\dfrac{\pi}{2z}}e^{-z},
\end{equation}
after a change of variable, we deduce a result of
Hardy \cite[eq.~(2.12)]{hardyqjpam1915}
\begin{equation*}
\sum_{n=1}^{\infty}r_2(n)e^{-s\sqrt{n}}=\df{2\pi}{s^2}-1+2\pi s\sum_{n=1}^{\infty}\df{r_2(n)}{(s^2+4\pi^2n)^{3/2}},
\end{equation*}
where $\text{Re } s>0$.  This was the primary identity that Hardy used to derive a lower bound for the error term in the famous \emph{circle problem}.

In \cite{dixfer2}, Dixon and Ferrar also obtained a generalization of \eqref{r2dix}, namely for Re$(\sqrt{\b})>0$, $\d\geq 0$, and $\nu$ arbitrary,
\begin{equation}\label{dixonferrar}
\b^{\nu/2}\sum_{n=0}^{\infty}\frac{r_2(n)}{(n+\d)^{\nu/2}}K_{\nu}(2\pi\sqrt{\b(n+\d)})
=\d^{(1-\nu)/2}
\sum_{n=0}^{\infty}\frac{r_2(n)}{(n+\b)^{(1-\nu)/2}}K_{1-\nu}(2\pi\sqrt{\d(n+\b)}).
\end{equation}
It is easy to see that if we replace $\nu$ by $-\nu$ in \eqref{dixonferrar}, let Re$(\nu)>0$, and then let $\d\to 0$, we obtain \eqref{r2dix}, with the help of \eqref{besselk} and \eqref{besseli}. If we set $\delta=a$, $\beta=b$, and  $\nu=\tf12$ in \eqref{dixonferrar}, and use \eqref{kspl}, we obtain a beautiful formula of Ramanujan \cite[p.~283, eq.~(4.21)]{hardyqjpam1915}, namely, for Re$(a)$, Re$(b)>0$,
\begin{equation}\label{ramdixfer}
\sum_{n=0}^{\infty}\frac{r_2(n)}{\sqrt{n+a}}e^{-2\pi\sqrt{b(n+a)}}=\sum_{n=0}^{\infty}\frac{r_2(n)}{\sqrt{n+b}}e^{-2\pi\sqrt{a(n+b)}}.
\end{equation}
 N.~S.~Koshliakov \cite{koshsum} obtained a generalization of \eqref{dixonferrar} with $r_2(n)$ replaced by $F(n)$, the number of representations of $n$ by a binary quadratic form of discriminant $\Delta<0$. A generalization, in turn, involving the coefficients of a general Dirichlet series satisfying a functional equation, was obtained by the first author \cite[p.~343, Thm.~9.1]{III}. See also \cite[eq.~(5.5)]{bls}.

Around the same time as \cite{dixfer2} appeared, A.~I. Popov \cite[eq.~(10)]{popov1935} gave a beautiful transformation for a series involving $r_2(n)$ and a product of modified Bessel functions of the first and second kind, that is, $I_{\nu}(z)$ and $K_{\nu}(z)$, respectively. He claimed that if Re$(\nu)>0$ and $\a\geq\b>0$, then
\begin{align}\label{popovid}
&\frac{2\pi}{(\a-\b)^{\nu}}\sum_{n=1}^{\infty}r_2(n)I_{\nu}(\pi(\sqrt{n\a}-\sqrt{n\b}))K_{\nu}(\pi(\sqrt{n\a}+\sqrt{n\b}))\nonumber\\
&=\frac{\nu-\pi\sqrt{\a\b}}{\nu\sqrt{\a\b}\left(\sqrt{\a}+\sqrt{\b}\right)^{2\nu}}
+\sum_{n=1}^{\infty}\frac{r_2(n)}{\sqrt{n+\a}\sqrt{n+\b}\left(\sqrt{n+\a}+\sqrt{n+\b}\right)^{2\nu}}.
\end{align}
Later, we will prove \eqref{popovid} and more generally show that it holds for a much larger region, namely, Re$(\sqrt{\a})\geq\textup{Re}(\sqrt{\b})>0$. Assuming \eqref{popovid} for the moment, however, let
$\a\to\b^{+}$. The interchange of the order of limit and summation can be justified using the exponential decay of the summand of the series on the left side, which is discussed in detail in Section \ref{proofcoro}. Noting that, by \eqref{besseli},
\begin{equation*}
\lim_{\a\to\b^{+}}\frac{I_{\nu}(\pi(\sqrt{n\a}-\sqrt{n\b}))}{(\a-\b)^{\nu}}
=\frac{1}{\G(\nu+1)}\left(\frac{\pi\sqrt{n}}{4\sqrt{\b}}\right)^{\nu}
\end{equation*}
and, by \eqref{besselk} and \eqref{besseli},
\begin{equation*}
\lim_{x\to0}x^{\nu/2}K_{\nu}(2\pi\sqrt{x\b})=\df{\Gamma(\nu)}{2\pi^{\nu}\b^{\nu/2}},
\end{equation*}
we are led to \eqref{r2dix}. Hence, \eqref{popovid} is another generalization of \eqref{r2dix}, different from \eqref{dixonferrar}.

Popov neither gave a proof of \eqref{popovid} in \cite{popov1935}, nor did he indicate how to prove it. In what follows, we give plausible evidence for how Popov might have arrived at \eqref{popovid}.

In \cite[eq.~(3)]{popov}, he gave the following result for any positive integer $k\geq 2$:
\begin{align}\label{popres}
&\lim_{x\to 0}\frac{h(x)}{x^{k/4-1/2}}+\sum_{n=1}^{\infty}\frac{r_k(n)}{n^{k/4-1/2}}h(n)\nonumber\\
&=\frac{\pi^{k/2}}{\G(k/2)}\int_{0}^{\infty}x^{k/4-1/2}h(x)\, dx+\pi\sum_{n=1}^{\infty}\frac{r_k(n)}{n^{k/4-1/2}}\int_{0}^{\infty}h(x)J_{k/2-1}(2\pi\sqrt{nx})\, dx.
\end{align}
Note that since $J_{0}(0)=1$, \eqref{popres} reduces to \eqref{r2vsfinf} when $k=2$. Also, in \cite{popovfock}, he gave a short proof of the following beautiful integral evaluation found by V.~A.~Fock \cite[eqs.~(31), (33)]{fock}, namely, for Re$(z)>$ Re$(w)>0$ and Re$(\nu)>-3/4$,
\begin{align}\label{fock}
\int_{0}^{\infty}J_{0}(\rho x)\left(\frac{\sqrt{x^2+z^2}-\sqrt{x^2+w^2}}{\sqrt{x^2+z^2}+\sqrt{x^2+w^2}}\right)^{\nu}\frac{x\, dx}{\sqrt{x^2+z^2}\sqrt{x^2+w^2}}=I_{\nu}\left(\frac{\rho(z-w)}{2}\right)K_{\nu}\left(\frac{\rho(z+w)}{2}\right).
\end{align}
Now observe that if we let
\begin{equation}\label{eff2}
h(t):=\frac{1}{2\pi\sqrt{t+\a}\sqrt{t+\b}}\left(\frac{\sqrt{t+\a}-\sqrt{t+\b}}{\sqrt{t+\a}+\sqrt{t+\b}}\right)^{\nu}
\end{equation}
in the special case $k=2$ of \eqref{popres}, or equivalently in \eqref{r2vsfinf}, and employ \eqref{fock}, we are led to \eqref{popovid} after simplification.

 Several comments are in order. Firstly, we note that Popov \cite{popov} does not give any conditions on $h$ for \eqref{popres} to hold. Secondly, as we show below, his proof of \eqref{popres} is deficient. Thirdly, note that the $h$ in \eqref{eff2} does not satisfy the big-$O$ hypothesis in Theorem \ref{hlinf}. Thus the proof of \eqref{popovid} mentioned above is purely formal. Popov wrote 13 papers in mathematics, and in none of these papers is a rigorous proof of \eqref{popovid} given. 

In this paper, we not only rigorously prove Popov's identity \eqref{popovid}, but we also give its massive generalization for any positive integer $k\geq 2$, with several well-known theorems in the literature arising as corollaries. We use \eqref{popres} in order to generalize \eqref{popovid}. But one needs to first obtain conditions on $h$ so that \eqref{popres} holds. The task of generalizing such results for $k=2$ to $k\geq 2$ is not straightforward, as we now explain.

In his study of the average order of $r_2(n)$, Hardy \cite[eq.~(1.25)]{hardyqjpam1915} offered the identity
\begin{equation}\label{hqjpam1.25}
{\sum_{0<n\leq x}}^{\prime}r_2(n)=\pi x-1+\sqrt{x}\sum_{n=1}^{\infty}\frac{r_2(n)}{\sqrt{n}}J_1(2\pi\sqrt{nx}),
\end{equation}
where, here, and throughout the sequel, the prime $'$ on the summation sign indicates that if $x$ is an integer, then only one-half of the summand is counted.  Hardy \cite[p.~265]{hardyqjpam1915} acknowledged that ``the form'' of \eqref{hqjpam1.25} was suggested by Ramanujan.

Popov \cite[eq.~(2)]{popov} incorrectly generalized \eqref{hqjpam1.25} by claiming that
\begin{equation}\label{1.25an}
{\sum_{0< n\leq x}}^{\prime}r_k(n)=\frac{(\pi x)^{k/2}}{\G(1+\frac{k}{2})}-1+x^{k/4}\sum_{n=1}^{\infty}\frac{r_k(n)}{n^{k/4}}J_{k/2}(2\pi\sqrt{nx}),
\end{equation}
for any positive integer $k\geq 2$. As can be seen from the work of Oppenheim \cite[Theorem 2]{oppenheim}, when $k>2$, the series on the right-hand side in \eqref{1.25an} does not converge but is Riesz summable $(R, n, \tfrac{1}{2}k-\frac{3}{2}+\epsilon)$ for any $\epsilon>0$.

Also, it is known \cite[p.~19]{cn} that if $x>0$ and $q>\tf12(k-1)$, then
{\allowdisplaybreaks\begin{align}\label{bessel1}
\df{1}{\Gamma(q+1)}&{\sum_{0\leq n\leq x}}^{\prime}r_k(n)(x-n)^{q}\nonumber\\
&\quad\quad=\df{\pi^{k/2}x^{k/2+q}}{\Gamma(q+1+k/2)}+\left(\df{1}{\pi}\right)^{q}
\sum_{n=1}^{\infty}r_k(n)\left(\df{x}{n}\right)^{k/4+q/2}J_{k/2+q}(2\pi\sqrt{nx}),
\end{align}}%
where the series on the right-hand side converges absolutely. Here again the prime $'$ on the summation sign in \eqref{bessel1} indicates that if $x$ is an integer, then only $\frac{1}{2}r_k(x)$ is counted  in the case that $q=0$. As proved in \cite[p.~19, eq.~(67)]{cn}, the validity of \eqref{bessel1} can be extended to $q>\tfrac{1}{2}(k-3)$, which appears to be best possible. However, Popov \cite[eq.~(1)]{popov} used \eqref{bessel1} with $q>-1$ in his proof. Observe that, in particular, if $q=0$,  then \eqref{bessel1} is valid only for $k=2$, in agreement with our discussion in the previous paragraph.

Since almost all proofs of  Vorono\"{\dotlessi}-type summation formulas (such as \eqref{popres}) require that the Riesz sum identity (such as \eqref{bessel1}) holds (see, for example, \cite[p.~142]{V}), the problem of extending \eqref{hlsf} to $k\geq 2$ appears to be delicate. Moreover, in the case of \eqref{popres}, that is, when all of the associated sums and integrals are infinite, one has to take extra care.

We \cite{bdkz1} recently circumvented this issue pertaining to an analogue of \eqref{hlsf} for $k\geq 2$  by noting that the ingenious proof by
N.~S.~Koshliakov \cite{kosh1}, \cite{kosh2} of the Vorono\"{\dotlessi} summation formula for any arithmetical function whose Dirichlet series satisfies a functional equation with one simple gamma factor does not make use of \eqref{bessel1}.  Recall \cite[p.~18]{cn} that if
\begin{equation}\label{zeta}
\zeta_k(s):=\sum_{n=1}^{\infty}\df{r_k(n)}{n^s},\quad\text{Re }s>\frac{k}{2},
\end{equation}
then $\zeta_k(s)$ has an analytic continuation to the entire complex plane and satisfies the functional equation
$$\pi^{-s}\Gamma(s)\zeta_k(s)=\pi^{s-\frac{k}{2}}\Gamma(\tfrac{k}{2}-s)\zeta_k(\tfrac{k}{2}-s).$$
In particular, Koshliakov's method then rigorously gives a proof of the analogue of \eqref{hlsf} for $k\geq 2$. For more details, the reader is referred to \cite{bdkz1}. Note that in Koshliakov's work, $h$ is analytic. We rephrase in the following theorem \cite[Thms.~2.3, 2.4]{bdkz1} our extension of Koshliakov's result \cite[p.~10]{kosh1}, \cite[p.~62]{kosh2} in the case $\a\to 0$.

\begin{theorem}\label{vv}
Let
\begin{equation*}
\varphi(s):=\sum_{n=1}^{\infty}a(n)\lambda_n^{-s} \quad\text{and}\quad \psi(s):=\sum_{n=1}^{\infty}b(n)\mu_n^{-s},
\end{equation*}
where $0<\lambda_1<\lambda_2<\cdots <\lambda_n\to\infty$ and $0<\mu_1<\mu_2<\cdots<\mu_n\to\infty$, and where the abscissae of absolute convergence are $\sigma_a$ and $\sigma_a^*$, respectively. Let $\varphi(s)$ and $\psi(s)$ satisfy a functional equation of the type
\begin{equation}\label{funcequa}
\Gamma(s)\varphi(s)=\Gamma(\kappa-s)\psi(\kappa-s),
\end{equation}
for some $\kappa>0$.  Suppose that there exists a meromorphic function $\chi$ with the following properties:
\begin{align*}
\textup{(i)}& \quad\chi(s)= \Gamma(s)\varphi(s), \quad \sigma>\sigma_a,\qquad \chi(s)= \Gamma(\kappa-s)\psi(\kappa-s),\quad \sigma<\kappa-\sigma_a^*;\\
\textup{(ii)}& \quad\lim_{|\textup{Im }s|\to\infty}\chi(s)=0, \text{uniformly in every interval} -\infty<\sigma_1\leq\sigma\leq\sigma_2<\infty;\\
\textup{(iii)}& \quad\text{the poles of $\chi$ are confined to a compact set}.
\end{align*}
Let $x>0$, and let $N$ be an integer such that $\lambda_{N}<x<\lambda_{N+1}$.  Suppose that all of the poles of $\varphi(s)$ lie in the half-plane \textup{Re}$(s)>0$, and let $h(z)$ be an analytic function containing the interval $[0,x]$ in its domain of analyticity. If  the infinite series and the integrals on the right side below converge uniformly on $[0, x]$, then
\begin{equation*}
{\sum_{\lambda_n\leq x}}^{\prime}a(n)h(\lambda_n)=\lim_{a\to0}\varphi(0)h(a)+\int_0^x{Q}^{\prime}_0(t)h(t)\, dt +\sum_{n=1}^{\infty}\df{b(n)}{\mu_n^{\kappa-1}}
\int_0^x\mathcal{I}_{-1}(\mu_nt)h(t)\, dt,
\end{equation*}
where
\begin{equation*}
\mathcal{I}_q(x):=x^{(\kappa+q)/2}J_{\kappa+q}(2\sqrt{x})
\end{equation*}
and
\begin{equation*}
Q_q(x):=\df{1}{2\pi i}\int_{C_q}\df{\Gamma(s)\varphi(s)}{\Gamma(s+q+1)}x^{s+q}ds,
\end{equation*}
with $C_q$ being a positively oriented closed curve (or curves) containing all of the integrand's poles on the interior of $C_q$.
\end{theorem}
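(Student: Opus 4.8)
The plan is to follow the method of Koshliakov, reducing the weighted sum to the summatory function by partial summation and then inserting the fundamental identity in a form in which $h$ is kept inside every integral, so that the pointwise series (which, as the introduction stresses, need not converge when $k>2$) is never written down. Throughout I would use the two elementary facts $\frac{d}{dt}\mathcal{I}_q(\mu_n t)=\mu_n\mathcal{I}_{q-1}(\mu_n t)$ and $\mathcal{I}_q(0)=0$ for $\kappa+q>0$, together with the Mellin--Barnes representation $\mathcal{I}_0(y)=\frac{1}{2\pi i}\int_{(c'')}\frac{\Gamma(\kappa+s)}{\Gamma(1-s)}y^{-s}\,ds$.

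First I would set $A(t):={\sum_{\lambda_n\leq t}}^{\prime}a(n)$. Since $\lambda_N<x<\lambda_{N+1}$, the endpoint prime is inactive, and as $A(0)=0$, Stieltjes integration by parts gives
\begin{equation*}
{\sum_{\lambda_n\leq x}}^{\prime}a(n)h(\lambda_n)=h(x)A(x)-\int_0^x A(t)h'(t)\,dt.
\end{equation*}
By Perron's formula $A(t)=\frac{1}{2\pi i}\int_{(c)}\varphi(s)\,t^s s^{-1}\,ds$ for $c>\sigma_a$. Shifting the contour to a line $\Re(s)=-c'<0$ is legitimate by the decay property (ii) and the finiteness (iii) of the pole set, and the residues at $s=0$ and at the poles of $\varphi$ (all in $\Re(s)>0$ by hypothesis) assemble precisely into $Q_0(t)$. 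Writing $A=Q_0+R$, the $Q_0$-part is treated by one further integration by parts: because every pole of $\varphi$ lies in $\Re(s)>0$, one has $Q_0(0^+)=\varphi(0)$ and $Q_0'$ is integrable at $0$, whence
\begin{equation*}
h(x)Q_0(x)-\int_0^x Q_0(t)h'(t)\,dt=\lim_{a\to0}\varphi(0)h(a)+\int_0^x Q_0'(t)h(t)\,dt,
\end{equation*}
which furnishes the first two terms on the right side of the theorem.

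It then remains to establish the identity
\begin{equation*}
h(x)R(x)-\int_0^x R(t)h'(t)\,dt=\sum_{n=1}^{\infty}\frac{b(n)}{\mu_n^{\kappa-1}}\int_0^x\mathcal{I}_{-1}(\mu_n t)h(t)\,dt.\tag{$\ast$}
\end{equation*}
For $(\ast)$ I would argue from the right. Using $\mathcal{I}_{-1}(\mu_n t)=\mu_n^{-1}\frac{d}{dt}\mathcal{I}_0(\mu_n t)$, $\mathcal{I}_0(0)=0$, and an integration by parts, each summand acquires the kernel $\mathcal{I}_0$; the hypothesis that the series and integrals converge uniformly on $[0,x]$ then permits interchanging summation and integration. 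Inserting the Mellin--Barnes representation of $\mathcal{I}_0$ brings in the Dirichlet series $\psi$, and the functional equation \eqref{funcequa}, written as $\Gamma(w)\psi(w)=\Gamma(\kappa-w)\varphi(\kappa-w)$, converts the $\psi$-side into the $\varphi$-side; a change of variable followed by a contour shift (again justified by (i)--(iii)) identifies the outcome with $R=A-Q_0$, the endpoint contributions at $t=x$ cancelling, which yields $(\ast)$.

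The main obstacle is exactly the phenomenon emphasized in the introduction: when the pointwise Riesz identity $R(t)=\sum_n b(n)\mu_n^{-\kappa}\mathcal{I}_0(\mu_n t)$ diverges (as for $r_k(n)$ with $k>2$), the contour integrals representing $R$ are only conditionally convergent and cannot be manipulated term by term. The device that rescues the argument, and the delicate part of the proof, is to keep $h$ inside every integral and to extract the missing convergence from the oscillation of the Bessel kernel under integration against the analytic $h$ (equivalently, from repeated integration by parts), so that each interchange of summation, integration, and contour displacement is performed only on an absolutely convergent object; the uniform-convergence hypothesis on $[0,x]$ is precisely what licenses these steps.
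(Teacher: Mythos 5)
First, a point of comparison that matters: this paper does not actually prove Theorem \ref{vv}. The theorem is imported verbatim from the companion paper \cite{bdkz1} (Theorems 2.3 and 2.4 there), and the only information given here about its proof is that it follows Koshliakov's method, whose essential features are (a) that it never invokes the Riesz-sum identity \eqref{bessel1}, and (b) that it exploits the \emph{analyticity} of $h$. So there is no in-paper argument to measure yours against, and I can only judge your sketch on its own terms and against the authors' description of their method.

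On those terms there is a genuine gap, and it is exactly the one the introduction warns about. In your treatment of $(\ast)$ you integrate by parts in each summand to trade $\mathcal{I}_{-1}$ for $\mathcal{I}_0$, which produces the boundary term $b(n)\mu_n^{-\kappa}\mathcal{I}_0(\mu_n x)h(x)$, and you then sum over $n$. That boundary series $\sum_n b(n)\mu_n^{-\kappa}\mathcal{I}_0(\mu_n x)$ is precisely the $q=0$ Riesz series, which diverges in the cases the theorem is designed to cover (e.g.\ $a(n)=b(n)=r_k(n)$, $\kappa=k/2$, $k>2$, by Oppenheim's result quoted after \eqref{1.25an}). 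The uniform-convergence hypothesis governs the series of integrals $\sum_n b(n)\mu_n^{1-\kappa}\int_0^x\mathcal{I}_{-1}(\mu_n t)h(t)\,dt$, not the termwise integrated-by-parts pieces, so it does not license this step; your closing paragraph correctly identifies the obstacle, but the ``device'' you invoke (keeping $h$ inside every integral) is contradicted by the manipulation you actually perform, which pulls $h(x)$ outside at the endpoint $t=x$. A second instance of the same problem occurs at the Perron step: hypotheses (i)--(iii) control $\chi=\Gamma\varphi$, not $\varphi(s)/s$, and on a line $\Re(s)=-c'<0$ the functional equation \eqref{funcequa} gives $|\varphi(s)/s|\asymp |t|^{\kappa+2c'-1}|\psi(\kappa-s)|$, which grows for $\kappa\geq 1$; the contour shift defining your $R$ is therefore not justified, and ``repeated integration by parts'' uses only smoothness of $h$, not analyticity. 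The missing idea is Koshliakov's: represent the analytic $h$ by Cauchy's integral formula, reduce to the single kernel $(z-t)^{-1}$, and establish the identity for that kernel by integrating $\chi(s)$ against its Mellin transform around a closed contour, so that the divergent pointwise Bessel series is never formed. That is the route taken in \cite{bdkz1}, and your sketch does not reach it.
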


It was only recently that we learned that Guinand \cite[p.~117, Thm. 5]{guinandsumself2} derived the following summation formula.

\begin{theorem}\label{guirknsumthm}
Let $k$ be a positive integer greater than $3$ and let $m=\left\lfloor\frac{1}{2}k\right\rfloor-1$. If $F(x), F'(x)$, \newline $F''(x), \dots, F^{(2m-1)}(x)$ are integrals, and $F(x), xF'(x), x^2F''(x), \dots, x^{2m}F^{(2m)}(x)$ belong to $L^{2}(0,\infty)$, then
\begin{align}\label{guirknsum}
&\lim_{N\to\infty}\left\{\sum_{n=1}^{N}\left(1-\frac{n}{N}\right)^{m+1}r_k(n)n^{\frac{1}{2}-\frac{k}{4}}F(n)
-\frac{\pi^{\frac{k}{2}}}{\G(\frac{k}{2})}\int_{0}^{N}\left(1-\frac{x}{N}\right)^{m+1}x^{\frac{k}{4}-\frac{1}{2}}F(x)\, dx\right\}\nonumber\\
&=\lim_{N\to\infty}\left\{\sum_{n=1}^{N}\left(1-\frac{n}{N}\right)^{m+1}r_k(n)n^{\frac{1}{2}-\frac{k}{4}}G(n)
-\frac{\pi^{\frac{k}{2}}}{\G(\frac{k}{2})}\int_{0}^{N}\left(1-\frac{x}{N}\right)^{m+1}x^{\frac{k}{4}-\frac{1}{2}}G(x)\, dx\right\},
\end{align}
where
\begin{equation}\label{fgrel}
\int_{0}^{x}G(y)y^{\frac{k}{4}-\frac{1}{2}}\, dy=x^{\frac{k}{4}}\int_{0}^{\infty}y^{-\frac{1}{2}}J_{\frac{k}{2}}(2\pi\sqrt{xy})F(y)\, dy,
\end{equation}
and $G(x)$ is chosen so that it is the integral of its derivative.
\end{theorem}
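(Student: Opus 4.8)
The plan is to transport the whole identity to the Mellin line and reduce the asserted equality of the two bracketed limits to the invariance of a single contour integral under the functional equation for $\zeta_k$.

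First I would dispose of the relation \eqref{fgrel}. Differentiating \eqref{fgrel} in $x$ and using $\frac{d}{dx}\bigl[x^{k/4}J_{k/2}(2\pi\sqrt{xy})\bigr]=\pi x^{k/4-1/2}y^{1/2}J_{k/2-1}(2\pi\sqrt{xy})$ collapses the integral equation to the pointwise Hankel transform
\begin{equation*}
G(x)=\pi\int_0^{\infty}J_{k/2-1}(2\pi\sqrt{xy})\,F(y)\,dy .
\end{equation*}
Thus $F\mapsto G$ is the Hankel transform of order $k/2-1$, an involution on the relevant weighted $L^2$ space, so the roles of $F$ and $G$ are symmetric. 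Writing $\widehat{F}(s)=\int_0^{\infty}F(x)x^{s-1}\,dx$ and using $\int_0^{\infty}x^{s-1}J_{k/2-1}(2\pi\sqrt{xy})\,dx=\pi^{-2s}y^{-s}\Gamma(s+\tf{k}4-\tf12)/\Gamma(\tf{k}4+\tf12-s)$, I obtain the key Mellin identity
\begin{equation*}
\widehat{G}(s)=\pi^{1-2s}\,\frac{\Gamma(s+\tf{k}4-\tf12)}{\Gamma(\tf{k}4+\tf12-s)}\,\widehat{F}(1-s).
\end{equation*}

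Next I would evaluate each bracketed limit, abbreviating by $\Phi(F)$ the left-hand limit in \eqref{guirknsum}. Inserting $F(n)=\frac1{2\pi i}\int_{(c)}\widehat{F}(s)n^{-s}\,ds$ with $c\in(0,1)$ and interchanging, the Ces\`{a}ro-weighted sum becomes $\frac1{2\pi i}\int_{(c)}\widehat{F}(s)D_N(s+\tf{k}4-\tf12)\,ds$, where $D_N(w)=\sum_{n\le N}(1-n/N)^{m+1}r_k(n)n^{-w}$. A Perron-type representation for $D_N$, followed by shifting the auxiliary contour to the left, gives $D_N(w)=\zeta_k(w)+\frac{\pi^{k/2}}{\Gamma(k/2)}\frac{\Gamma(m+2)\Gamma(\tf{k}2-w)}{\Gamma(m+2+\tf{k}2-w)}N^{\tf{k}2-w}+o(1)$, the middle term coming from the residue of the simple pole of $\zeta_k$ at $w=k/2$. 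A direct Mellin computation shows that the subtracted integral $\frac{\pi^{k/2}}{\Gamma(k/2)}\int_0^N(1-x/N)^{m+1}x^{k/4-1/2}F(x)\,dx$ equals \emph{exactly} this growing middle term; hence the two cancel and
\begin{equation*}
\Phi(F)=\frac1{2\pi i}\int_{(c)}\widehat{F}(s)\,\zeta_k\!\left(s+\tf{k}4-\tf12\right)ds .
\end{equation*}
Computing $\Phi(G)$ identically, substituting the identity for $\widehat{G}$, and applying the functional equation $\zeta_k(w)=\pi^{2w-k/2}\frac{\Gamma(k/2-w)}{\Gamma(w)}\zeta_k(k/2-w)$ at $w=s+\tf{k}4-\tf12$, all powers of $\pi$ and gamma factors cancel, leaving $\Phi(G)=\frac1{2\pi i}\int_{(c)}\widehat{F}(1-s)\zeta_k(\tf{k}4+\tf12-s)\,ds$; the change of variable $s\mapsto 1-s$ recasts this as $\frac1{2\pi i}\int_{(1-c)}\widehat{F}(s)\zeta_k(s+\tf{k}4-\tf12)\,ds$, the same integrand as $\Phi(F)$ but on $\Re(s)=1-c$. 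Since $c$ and $1-c$ both lie in $(0,1)$ while the only pole of the integrand sits at $s=k/4+1/2>1$, shifting the contour yields $\Phi(F)=\Phi(G)$, which is \eqref{guirknsum}.

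The \emph{main obstacle} is the rigorous justification of the three interchanges—sum against integral, the $N\to\infty$ limit, and the final contour shift—under Guinand's precise hypotheses. Integrating by parts $2m$ times in the Mellin integral and invoking Mellin--Plancherel, the assumptions that $F,F',\dots,F^{(2m-1)}$ are integrals and $F,xF',\dots,x^{2m}F^{(2m)}\in L^2(0,\infty)$ produce the decay $\widehat{F}(\sigma+it)\ll(1+|t|)^{-2m}$ on the relevant strip, while the Ces\`{a}ro order $m+1=\lfloor k/2\rfloor$ supplies exactly the smoothing the Perron step requires. The delicate calibration is that this decay must dominate the growth of $\zeta_k(s+\tf{k}4-\tf12)$ on vertical lines, controlled through the functional equation, Stirling's formula, and a convexity estimate; it is precisely the matching of $2m\sim k-2$ against the polynomial growth generated by the gamma quotient $\Gamma(\tf{k}2-w)/\Gamma(w)$ that renders the contour integral absolutely convergent and the shift from $\Re(s)=c$ to $\Re(s)=1-c$ legitimate.
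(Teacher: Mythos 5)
First, a point of reference: this paper does not prove Theorem \ref{guirknsumthm} at all --- it is quoted from Guinand \cite{guinandsumself2} and used as a black box, the paper's own contribution being the passage from these Riesz-weighted limits to the unweighted identity of Theorem \ref{guirknsuminfthm}. So your proposal has to be measured against Guinand's original argument rather than anything in this paper. Your route --- differentiate \eqref{fgrel} down to the Hankel transform, pass to Mellin transforms, and let the functional equation $\pi^{-s}\Gamma(s)\zeta_k(s)=\pi^{s-k/2}\Gamma(\tfrac{k}{2}-s)\zeta_k(\tfrac{k}{2}-s)$ do the work --- is the right one, and is in essence Guinand's own ($L^2$ Mellin--Plancherel theory for self-reciprocal transforms plus the functional equation). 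The formal computations all check: the Mellin transform of the Hankel kernel, the relation $\widehat{G}(s)=\pi^{1-2s}\Gamma(s+\tfrac{k}{4}-\tfrac12)\Gamma(\tfrac{k}{4}+\tfrac12-s)^{-1}\widehat{F}(1-s)$, the exact cancellation of the subtracted integral against the contribution of the pole of $\zeta_k$ at $w=k/2$ (with the correct residue $\pi^{k/2}/\Gamma(k/2)$), and the clean collapse of all powers of $\pi$ and gamma factors after applying the functional equation at $w=s+\tfrac{k}{4}-\tfrac12$.

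Two genuine gaps remain. The smaller one: under the stated hypotheses $\widehat{F}$ exists only on $\Re(s)=\tfrac12$ in the Plancherel sense, so nothing licenses Mellin inversion on a general line $\Re(s)=c$, nor the closing shift from $c$ to $1-c$. This is repaired by taking $c=\tfrac12$ throughout --- the fixed line of $s\mapsto 1-s$, so no shift is ever needed --- but then your integrals are $L^2$ pairings rather than absolutely convergent contour integrals and the argument must be recast in that language. The larger gap is exactly the paragraph you label the ``main obstacle'': the Perron representation of $D_N(w)$ with an $o(1)$ remainder, and the three interchanges, are where the hypotheses must actually be spent, and the margin is too thin for the absolute-convergence shortcut. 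On $\Re(s)=\tfrac12$ the factor $\zeta_k(s+\tfrac{k}{4}-\tfrac12)$ sits on the central line $\Re(w)=\tfrac{k}{4}$ with convexity growth about $|t|^{k/4}$, while the $2m$ weighted $L^2$ derivatives give $\widehat{F}(\tfrac12+it)$ decay of order $|t|^{-2m}$ only in mean square; for $k=4$ or $5$ (where $2m=2$) the product is not absolutely integrable, so conditional or $L^2$ convergence has to be argued, not asserted. As it stands the proposal is a correct and well-calibrated skeleton of the known proof, but the analytic content of the theorem is precisely the part deferred.
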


 Guinand also remarks in a footnote of \cite[p.~117]{guinandsumself2} that one could prove this result with a slightly smaller value of $m$ than the one considered here.

In \cite[p.~264, eq.~(10.7)]{guinandconcord}, he also claims without proof that
\begin{align}\label{guirknsuminf}
&\sum_{n=1}^{\infty}r_k(n)n^{\frac{1}{2}-\frac{k}{4}}F(n)
-\frac{\pi^{\frac{k}{2}}}{\G(\frac{k}{2})}\int_{0}^{\infty}x^{\frac{k}{4}-\frac{1}{2}}F(x)\, dx\nonumber\\
&=\sum_{n=1}^{\infty}r_k(n)n^{\frac{1}{2}-\frac{k}{4}}G(n)
-\frac{\pi^{\frac{k}{2}}}{\G(\frac{k}{2})}\int_{0}^{\infty}x^{\frac{k}{4}-\frac{1}{2}}G(x)\, dx,
\end{align}
where
\begin{equation}\label{fgrelinf}
G(x)=\pi\int_{0}^{\infty}F(t)J_{\frac{k}{2}-1}(2\pi\sqrt{xt})\, dt.
\end{equation}
 If we let $F=h$, it can be easily seen that this formula is the same as \eqref{popres},
except that Guinand's formula involves the term
$$-\frac{\pi^{\frac{k}{2}}}{\G(\frac{k}{2})}\int_{0}^{\infty}x^{\frac{k}{4}-\frac{1}{2}}G(x)\, dx,$$
 whereas \eqref{popres} contains $-\lim_{x\to 0}F(x)/x^{k/4-1/2}$. Under appropriate hypotheses on $F(x)$ and $G(x)$, we now show that
 \begin{equation}\label{b1}
 \lim_{x\to0}\df{F(x)}{x^{\frac{k}{4}-\frac12}}=\df{\pi^{\frac{k}{2}}}{\Gamma(\frac{k}{2})}\int_0^{\infty}x^{\frac{k}{4}-\frac12}G(x)dx,
 \end{equation}
 where $G(x)$ is defined by \eqref{fgrelinf}. If $F(x), G(x) \in L(0,\infty)$ and are of bounded variation at each point in $(0,\infty)$, then the Hankel transform of $G(x)$ holds \cite[p.~240]{titchfourier}, i.e.,
 \begin{equation}\label{b2}
 F(x)=\pi\int_{0}^{\infty}G(t)J_{\frac{k}{2}-1}(2\pi\sqrt{xt})\, dt.
\end{equation}
To justify taking the limit inside the integral below, we invoke the following theorem from Titchmarsh's text \cite[p.~25]{titchcomplex}. If $f(x,y)$ is continuous on the rectangle $a\leq x\leq b, \a\leq y\leq\b$ for all values of $b$, and if the integral
$$\phi(y)=\int_{\a}^{\infty}f(x,y)\, dx$$
converges uniformly with respect to $y$ on $(\a,\b)$, then $\phi(y)$ is a continuous function of $y$ in this interval.  We apply this theorem with $x\to t, y\to x, \a=0, a=0, b>0$. We further assume that $G(x)$ is continuous on $[0,\infty)$ and that the integral converges uniformly with respect to $x$ on some interval $0\leq x\leq\epsilon$.  Hence, using the definition of $J_{\nu}(z)$ from \eqref{sumbesselj}, we find that
\begin{align*}
\lim_{x\to0}\df{F(x)}{x^{\frac{k}{4}-\frac12}}
&=\pi\lim_{x\to0}\int_0^{\infty}G(t)\df{J_{\frac{k}{2}-1}(2\pi\sqrt{xt})}{x^{\frac{k}{4}-\frac12}}\, dt\\
&=\pi\int_0^{\infty}G(t)\lim_{x\to0}\df{J_{\frac{k}{2}-1}(2\pi\sqrt{xt})}{x^{\frac{k}{4}-\frac12}}\, dt\\
&=\pi\int_0^{\infty}G(t)\lim_{x\to0}\left(\df{2\pi\sqrt{xt}}{2}\right)^{\frac{k}{2}-1}\df{dt}{x^{\frac{k}{4}-\frac12}\Gamma(\frac{k}{2})}\\
&=\df{\pi^{\frac{k}{2}}}{\Gamma(\frac{k}{2})}\int_0^{\infty}G(t)t^{\frac{k}{4}-\frac12}\, dt,
\end{align*}
which is identical to \eqref{b1}.

The summation formula \eqref{guirknsuminf} is one among several that Guinand discusses in Section 10 of \cite{guinandconcord}. However, like Popov, Guinand \cite{guinandconcord} does not give conditions for \eqref{guirknsuminf} to hold, for, at the beginning of \cite[p.~263]{guinandconcord}, Guinand says, \textit{``For brevity, no attempt is made to give conditions on $F(x)$ and $G(x)$ for each summation formula''}.

In this paper, using Theorem \ref{guirknsumthm} and imposing further conditions on $F$, we derive the following theorem, thereby rigorously deriving \eqref{guirknsuminf}, and equivalently from the above discussion, \eqref{popres}, for the first time. This is done in Section \ref{guicondition}.

\begin{theorem}\label{guirknsuminfthm}
Let $k$ be a positive integer greater than $3$. Assume that $F$ satisfies the hypotheses of Theorem \textup{\ref{guirknsumthm}}, and that, as $x\to\infty$,
\begin{equation}\label{Fbigo}
F(x)=O_{k}\left(x^{-\frac{k}{4}-\frac{1}{2}-\tau}\right),
\end{equation}
for some fixed $\tau>0$.
Let the function $G$ be defined by
\begin{equation}\label{gee}
G(y)=\pi\int_{0}^{\infty}F(t)J_{\frac{k}{2}-1}(2\pi\sqrt{yt})\, dt,
\end{equation}
and assume that it satisfies
\begin{equation}\label{Gbigo}
G(y)=O_{k}\left(y^{-\frac{k}{4}-\frac{1}{2}-\tau}\right),
\end{equation}
for $\tau>0$, as $y\to\infty$. Then
\begin{align}\label{guirknsuminfallk}
&\sum_{n=1}^{\infty}r_k(n)n^{\frac{1}{2}-\frac{k}{4}}F(n)-\frac{\pi^{\frac{k}{2}}}{\G(\frac{k}{2})}\int_{0}^{\infty}
x^{\frac{k}{4}-\frac{1}{2}}F(x)\, dx\nonumber\\
&=\sum_{n=1}^{\infty}r_k(n)n^{\frac{1}{2}-\frac{k}{4}}G(n)-\frac{\pi^{\frac{k}{2}}}{\G(\frac{k}{2})}\int_{0}^{\infty}
x^{\frac{k}{4}-\frac{1}{2}}G(x)\, dx.
\end{align}
For $k=2$ and $3$, \eqref{guirknsuminfallk} holds if $F$ is continuous on $[0,\infty)$,  $F(x), xF'(x)\in L^{2}(0,\infty)$, and  $F$ satisfies \eqref{Fbigo}, and if $G$ is defined in \eqref{gee} and  satisfies \eqref{Gbigo}.
\end{theorem}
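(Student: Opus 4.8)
The plan is to deduce \eqref{guirknsuminfallk} from Guinand's weighted identity \eqref{guirknsum} of Theorem \ref{guirknsumthm} by removing the Ces\`aro factors $(1-n/N)^{m+1}$ and $(1-x/N)^{m+1}$ in the limit $N\to\infty$; the extra decay hypotheses \eqref{Fbigo} and \eqref{Gbigo} are precisely what legitimize this passage. Before invoking Theorem \ref{guirknsumthm} I would first verify that the $G$ defined by \eqref{gee} coincides with the function determined by the relation \eqref{fgrel} used in that theorem. Differentiating \eqref{fgrel} in $x$, the left side gives $G(x)x^{k/4-1/2}$, while on the right side the Bessel recurrence $\frac{d}{dx}\left[x^{k/4}J_{k/2}(2\pi\sqrt{xy})\right]=\pi\sqrt{y}\,x^{k/4-1/2}J_{k/2-1}(2\pi\sqrt{xy})$ reproduces $\pi x^{k/4-1/2}\int_{0}^{\infty}F(y)J_{k/2-1}(2\pi\sqrt{xy})\,dy$, so the two descriptions of $G$ agree and Theorem \ref{guirknsumthm} applies to the pair $(F,G)$.

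The first substantive step is to record that, under \eqref{Fbigo} and \eqref{Gbigo}, each of the four objects in \eqref{guirknsuminfallk} converges absolutely. For the series $\sum_{n=1}^{\infty}r_k(n)n^{1/2-k/4}F(n)$ I would combine the average bound $\sum_{n\le x}r_k(n)\ll_k x^{k/2}$ (immediate from the convergence of \eqref{zeta} for $\operatorname{Re}s>k/2$) with $|F(n)|\ll n^{-k/4-1/2-\tau}$, so the $n$-th term is $\ll n^{-k/2-\tau}$; partial summation against the decreasing majorant $t^{-k/2-\tau}$ then reduces convergence to the finiteness of $\int_{1}^{\infty}t^{-1-\tau}\,dt$. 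For $\int_{0}^{\infty}x^{k/4-1/2}F(x)\,dx$ the tail is $O(x^{-1-\tau})$ by \eqref{Fbigo}, while integrability near the origin follows from $F\in L^{2}(0,\infty)$ and $x^{k/4-1/2}\in L^{2}$ near $0$ (valid since $k/2-1>-1$) via Cauchy--Schwarz. The two $G$-analogues are handled identically using \eqref{Gbigo} and the hypotheses on $G$.

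With absolute convergence established, the core is a dominated-convergence (Abelian) step. Writing $a_n=r_k(n)n^{1/2-k/4}F(n)$, the weights obey $0\le(1-n/N)^{m+1}\le1$ for $1\le n\le N$ and $(1-n/N)^{m+1}\to1$ for each fixed $n$; applying dominated convergence on the counting measure, with dominating function $\sum|a_n|<\infty$, the weighted sum converges to $\sum_{n=1}^{\infty}a_n$. The Lebesgue dominated convergence theorem, with dominating function $|x^{k/4-1/2}F(x)|$ and the integrand extended by zero beyond $x=N$, likewise gives $\int_{0}^{N}(1-x/N)^{m+1}x^{k/4-1/2}F(x)\,dx\to\int_{0}^{\infty}x^{k/4-1/2}F(x)\,dx$. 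Hence the bracketed limit on the $F$-side of \eqref{guirknsum} equals $\sum_{n=1}^{\infty}r_k(n)n^{1/2-k/4}F(n)-\frac{\pi^{k/2}}{\Gamma(k/2)}\int_{0}^{\infty}x^{k/4-1/2}F(x)\,dx$, and the same computation applies verbatim to the $G$-side; equating the two yields \eqref{guirknsuminfallk}.

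Finally, for $k=2$ and $k=3$ one has $m=\lfloor k/2\rfloor-1=0$, outside the range $k>3$ of Theorem \ref{guirknsumthm}, so I would instead begin from Guinand's summation formula carrying the single factor $(1-\cdot/N)$, which holds under the weaker hypotheses that $F$ be continuous and $F,xF'\in L^{2}(0,\infty)$; this lies within the same $L^{2}$ Hankel-transform/Parseval framework underlying Theorem \ref{guirknsumthm}, and is exactly the reduction in $m$ permitted by Guinand's footnote. The weight is then stripped off by the identical dominated-convergence argument once \eqref{Fbigo} and \eqref{Gbigo} secure absolute convergence. The main obstacle is precisely this weight removal: everything hinges on the absolute convergence furnished by \eqref{Fbigo} and \eqref{Gbigo}, which is what frees the summand and integrand from the delicate cancellation between the two individually divergent pieces present in Guinand's borderline formulation, and the care needed is in confirming that the decay exponent $-k/4-1/2-\tau$ makes each of the four pieces converge in its own right rather than only in the combined difference.
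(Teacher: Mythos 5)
Your proposal is correct and follows essentially the same route as the paper: verify that the $G$ of \eqref{gee} satisfies the relation \eqref{fgrel}, invoke Theorem \ref{guirknsumthm}, and strip the weights $(1-n/N)^{m+1}$, $(1-x/N)^{m+1}$ by a dominated-convergence argument (the paper phrases this as Tannery's theorem and its integral analogue), with the cases $k=2,3$ handled by the $m=0$ form of Guinand's formula under the added continuity and $L^2$ hypotheses. The only cosmetic differences are that you establish absolute convergence of the series via the average order $\sum_{n\le x}r_k(n)\ll x^{k/2}$ and partial summation, where the paper uses the pointwise bound $r_k(n)=O_k(n^{k/2-1+\epsilon})$, and you use Cauchy--Schwarz near the origin where the paper uses continuity of $F$.
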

We note that the above theorem is more general than Theorem \ref{vv} with $a(n)=b(n)=r_k(n)$ and $x\to\infty$, since, unlike the latter, it does not require that $F$ be analytic. Moreover, for certain choices of $h$ in this specialized version of Theorem \ref{vv}, it may be very difficult to justify the interchange the order of summation and $\lim_{x\to\infty}$. This turns out to be the case in our choice of the function to prove Theorem \ref{popgenrk} below, that is, \eqref{F}, and hence we resort to Theorem \ref{guirknsuminfthm} for its proof.

If $(a)_n:=a(a+1)\cdots(a+n-1)$, $n\geq1$, and $(a)_0=1$, define the ordinary hypergeometric function $_2F_1$ by
\begin{equation}\label{2f1}
_2F_1\left(\begin{matrix} a,b\\c\end{matrix}\,\Bigr|\, z\right):=\sum_{n=0}^{\infty}\df{(a)_n(b)_n}{(c)_nn!}z^n,\qquad |z|<1.
\end{equation}
We now state in Theorem \ref{popgenrk} a new transformation involving $r_k(n)$, which is the main result of our paper. The main ingredients in the proof of this transformation are Theorem \ref{guirknsuminfthm} and a remarkable generalization of Fock's integral \eqref{fock} given by Koshliakov \cite[eq.~(1)]{kosh34}, namely, for Re$(\mu)>-1$, Re$\left(\mu+2\nu+\tfrac{3}{2}\right)>0$ and Re$(z)>$ Re$(w)>0$,
\begin{align}\label{koshfock}
&\int_{0}^{\infty}J_{\mu}(\rho u)\left(\frac{\sqrt{u^2+z^2}-\sqrt{u^2+w^2}}{\sqrt{u^2+z^2}+\sqrt{u^2+w^2}}\right)^{\nu}\frac{u^{\mu+1}}{\sqrt{u^2+z^2}
\sqrt{u^2+w^2}}\left(\frac{1}{\sqrt{u^2+z^2}}+\frac{1}{\sqrt{u^2+w^2}}\right)^{2\mu}\nonumber\\
&\quad\times\pFq21{\nu-\mu, -\mu}{\nu+1}{\left(\frac{\sqrt{u^2+z^2}-\sqrt{u^2+w^2}}{\sqrt{u^2+z^2}+\sqrt{u^2+w^2}}\right)^{2}}\, du\nonumber\\
&=\frac{\G(\nu+1)}{\G(\nu+\mu+1)}(2\rho)^{\mu}I_{\nu}\left(\frac{\rho(z-w)}{2}\right)K_{\nu}\left(\frac{\rho(z+w)}{2}\right),
\end{align}
where $_2F_1$, $I_{\nu}$, and $K_{\nu}$ are defined by \eqref{2f1}, \eqref{besseli}, and \eqref{besselk}, respectively.
\begin{theorem}\label{popgenrk}
Let $k\geq 2$ be a positive integer. Let $I_{\nu}(z)$ and $K_{\nu}(z)$ denote the modified Bessel functions of the first and second kinds respectively. If $\textup{Re}(\sqrt{\a})\geq\textup{Re}(\sqrt{\b})>0$ and \textup{Re}$(\nu)>0$, then
\begin{align}\label{popgenrkeqn}
&\sum_{n=1}^{\infty}r_k(n)I_{\nu}(\pi(\sqrt{n\a}-\sqrt{n\b}))K_{\nu}(\pi(\sqrt{n\a}+\sqrt{n\b}))\nonumber\\
&=-\frac{1}{2\nu}\left(\frac{\sqrt{\a}-\sqrt{\b}}{\sqrt{\a}+\sqrt{\b}}\right)^{\nu}
+\frac{\Gamma\left(\nu+\frac{k}{2}\right)}{\pi^{\frac{k}{2}}2^{k-1}\Gamma(\nu+1)}
\sum_{n=0}^{\infty}\frac{r_{k}(n)}{\sqrt{n+\a}\sqrt{n+\b}}\left(\frac{\sqrt{n+\a}-\sqrt{n+\b}}{\sqrt{n+\a}+\sqrt{n+\b}}\right)^{\nu}
\nonumber\\
&\quad\times\left(\frac{1}{\sqrt{n+\a}}+\frac{1}{\sqrt{n+\b}}\right)^{k-2}\pFq21{\nu+1-\frac{k}{2}, 1-\frac{k}{2}}{\nu+1}{\left(\frac{\sqrt{n+\a}-\sqrt{n+\b}}{\sqrt{n+\a}+\sqrt{n+\b}}\right)^{2}}.
\end{align}
\end{theorem}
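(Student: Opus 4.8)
The plan is to apply the rigorous Vorono\"{\dotlessi} summation formula of Theorem \ref{guirknsuminfthm} to a single, carefully chosen function $F$, and to evaluate its Hankel transform $G$ in closed form by means of Koshliakov's integral \eqref{koshfock}. Guided by the formal $k=2$ derivation through \eqref{eff2} and \eqref{fock}, I would take
\begin{align*}
F(t)&=\frac{t^{k/4-1/2}}{2\pi}\left(\frac{\sqrt{t+\a}-\sqrt{t+\b}}{\sqrt{t+\a}+\sqrt{t+\b}}\right)^{\nu}\frac{1}{\sqrt{t+\a}\sqrt{t+\b}}\left(\frac{1}{\sqrt{t+\a}}+\frac{1}{\sqrt{t+\b}}\right)^{k-2}\\
&\quad\times\pFq21{\nu+1-\frac{k}{2},\,1-\frac{k}{2}}{\nu+1}{\left(\frac{\sqrt{t+\a}-\sqrt{t+\b}}{\sqrt{t+\a}+\sqrt{t+\b}}\right)^{2}}.
\end{align*}
Substituting $t=u^{2}$ in the definition \eqref{gee} of $G$ and applying \eqref{koshfock} with $\mu=\tf{k}{2}-1$, $z=\sqrt{\a}$, $w=\sqrt{\b}$, and $\rho=2\pi\sqrt{y}$ (so that $\nu-\mu=\nu+1-\tf{k}{2}$, $-\mu=1-\tf{k}{2}$, and $2\mu=k-2$, while the conditions $\Re(\mu)>-1$ and $\Re(\mu+2\nu+\tf32)>0$ hold for $k\geq 2$ and $\Re(\nu)>0$), the integral for $G$ collapses to
\begin{equation*}
G(y)=\frac{\G(\nu+1)}{\G\left(\nu+\tf{k}{2}\right)}\left(4\pi\sqrt{y}\right)^{k/2-1}I_{\nu}\left(\pi(\sqrt{y\a}-\sqrt{y\b})\right)K_{\nu}\left(\pi(\sqrt{y\a}+\sqrt{y\b})\right).
\end{equation*}

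With this choice the two sides of \eqref{guirknsuminfallk} become transparent. Since $n^{1/2-k/4}F(n)$ is exactly $\tf{1}{2\pi}$ times the $n$-th summand of the hypergeometric series in \eqref{popgenrkeqn}, the $F$-sum reproduces that series over $n\geq1$; and since the powers of $n$ cancel in $n^{1/2-k/4}G(n)$, the $G$-sum equals $\tf{\G(\nu+1)}{\G(\nu+k/2)}(4\pi)^{k/2-1}$ times $\sum_{n\geq1}r_k(n)I_{\nu}K_{\nu}$. Solving \eqref{guirknsuminfallk} for the latter sum yields precisely the constant $\tf{\G(\nu+k/2)}{\pi^{k/2}2^{k-1}\G(\nu+1)}$ in front of the hypergeometric series, and leaves the two integral terms to be identified. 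For the first, \eqref{b1} gives $\tf{\pi^{k/2}}{\G(k/2)}\int_0^{\infty}x^{k/4-1/2}G(x)\,dx=\lim_{x\to0}F(x)/x^{k/4-1/2}$, and this limit is exactly the $n=0$ summand of the hypergeometric series; thus the $G$-integral supplies the missing $n=0$ term and extends the sum to begin at $n=0$. For the second, the self-reciprocity of the Hankel transform \eqref{b2} yields the dual relation $\tf{\pi^{k/2}}{\G(k/2)}\int_0^{\infty}x^{k/4-1/2}F(x)\,dx=\lim_{y\to0}G(y)/y^{k/4-1/2}$; inserting the closed form of $G$ and using $I_{\nu}(z)\sim(z/2)^{\nu}/\G(\nu+1)$ and $K_{\nu}(z)\sim\tf12\G(\nu)(z/2)^{-\nu}$ as $z\to0$ (valid for $\Re(\nu)>0$) evaluates this limit, and after multiplication by the overall constant it collapses to the term $-\tf{1}{2\nu}\left(\tf{\sqrt{\a}-\sqrt{\b}}{\sqrt{\a}+\sqrt{\b}}\right)^{\nu}$. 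Collecting the three contributions reproduces \eqref{popgenrkeqn} exactly.

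The main obstacle is to verify that $F$, and consequently $G$, meets the hypotheses required to apply Theorem \ref{guirknsuminfthm}: that $F,F',\dots,F^{(2m-1)}$ are integrals, that $F,tF',\dots,t^{2m}F^{(2m)}\in L^{2}(0,\infty)$ with $m=\lfloor\tf{k}{2}\rfloor-1$, and that the decay estimates \eqref{Fbigo} and \eqref{Gbigo} hold. For $F$ one rationalizes $\sqrt{t+\a}-\sqrt{t+\b}=(\a-\b)/(\sqrt{t+\a}+\sqrt{t+\b})$ and lets $t\to\infty$; since the $_2F_1$ tends to $1$, one finds $F(t)=O(t^{-k/4-1/2-\nu})$, so \eqref{Fbigo} holds with $\tau=\Re(\nu)>0$, while each derivative decays one additional power of $t$ at infinity and $F$ is analytic near the origin, which together give the $L^{2}$ bounds and the absolute continuity. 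The closed form of $G$ shows that, when $\Re(\sqrt{\a})>\Re(\sqrt{\b})$, the product $I_{\nu}K_{\nu}$ decays like $e^{-2\pi\sqrt{\b}\sqrt{y}}$, so $G$ decays exponentially and \eqref{Gbigo} is immediate; this same exponential decay furnishes the absolute and locally uniform convergence needed below.

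Two final points deserve care. First, the derivation through \eqref{koshfock} requires the strict inequality $\Re(\sqrt{\a})>\Re(\sqrt{\b})>0$, whereas \eqref{popgenrkeqn} is asserted on the closed region $\Re(\sqrt{\a})\geq\Re(\sqrt{\b})>0$; since both sides are analytic in $\a$ for fixed $\b$ on a neighborhood of this region---a consequence of the locally uniform convergence guaranteed by the decay estimates above---the boundary case $\Re(\sqrt{\a})=\Re(\sqrt{\b})$ follows by continuity. Second, for $k=2$ and $3$ one has $m=0$ and must instead invoke the weaker hypotheses recorded at the end of Theorem \ref{guirknsuminfthm}, namely continuity of $F$ on $[0,\infty)$ together with $F,xF'\in L^{2}(0,\infty)$ and the estimates \eqref{Fbigo}, \eqref{Gbigo}; the computation of the sums and integrals above is otherwise unchanged, and for $k=2$ it recovers Popov's identity \eqref{popovid}.
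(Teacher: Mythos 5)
Your proposal is correct and follows essentially the same route as the paper: the same choice of $F$ (up to the harmless factor $1/(2\pi)$), the same use of Koshliakov's integral \eqref{koshfock} to evaluate $G$ in closed form, the same application of Theorem \ref{guirknsuminfthm} with the same verification of its hypotheses, and the same passage to the full region $\Re(\sqrt{\a})\geq\Re(\sqrt{\b})>0$ by analytic continuation. The only variation is that you evaluate the two side integrals via the Hankel reciprocity limits \eqref{b1}--\eqref{b2} (identifying them with $\lim_{x\to0}F(x)/x^{k/4-1/2}$ and $\lim_{y\to0}G(y)/y^{k/4-1/2}$), whereas the paper computes them directly through \eqref{watkosh1} and the Legendre-function formula \eqref{bc} and only afterwards notes the agreement with \eqref{b1}; your shortcut is legitimate since the polynomial decay of $F$ and the exponential decay of $G$ justify the required interchanges of limit and integral.
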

Also applying the inversion formula in the theory of Hankel transforms to \eqref{koshfock}, that is, \eqref{fgrelinf} and \eqref{b2}, we find that the following integral evaluation holds for Re$(\mu)>-1$, Re$\left(\mu+\nu\right)>-1$ and Re$(\pi(z+w))>|$ Re$(\pi(z-w))|+|$ Im$(\rho)|$:
\begin{align*}
&\int_{0}^{\infty}u^{\mu+1}J_{\mu}(\rho u)I_{\nu}(\pi(z-w) u)K_{\nu}(\pi(z+w) u)\, du\\
&=\frac{\G(\nu+\mu+1)}{\G(\nu+1)}\frac{(\rho/2)^{\mu}}{\sqrt{\rho^2+4\pi^2 z^2}\sqrt{\rho^2+4\pi^2 w^2}}\left(\frac{\sqrt{\rho^2+4\pi^2 z^2}-\sqrt{\rho^2+4\pi^2 w^2}}{\sqrt{\rho^2+4\pi^2 z^2}+\sqrt{\rho^2+4\pi^2 w^2}}\right)^{\nu}\nonumber\\
&\quad\times\left(\frac{1}{\sqrt{\rho^2+4\pi^2 z^2}}+\frac{1}{\sqrt{\rho^2+4\pi^2 w^2}}\right)^{2\mu}\pFq21{\nu-\mu, -\mu}{\nu+1}{\left(\frac{\sqrt{\rho^2+4\pi^2 z^2}-\sqrt{\rho^2+4\pi^2 w^2}}{\sqrt{\rho^2+4\pi^2 z^2}+\sqrt{\rho^2+4\pi^2 w^2}}\right)^{2}}.		
\end{align*}
This integral evaluation is the same as in \cite[p.~686, Formula \textbf{6.578.11}]{grn}. To see this, let $c=\rho, a=\pi(z+w)$ and $b=\pi(z-w)$ in the latter formula and simplify.

This paper is organized as follows. In Section \ref{prelim} we collect preliminary results which are used in the sequel. Theorem \ref{guirknsuminfthm} is proved in Section \ref{guicondition}. Section \ref{proofcoro} is devoted to the proof of Theorem \ref{popgenrk} for Re$(\nu)>0$ and to several important corollaries that follow from it. In Section \ref{ac}, we use the principle of analytic continuation to extend the validity of Theorem \ref{popgenrk} to Re$(\nu)>-1$. Of course, it can be analytically continued to Re$(\nu)>-\zeta$ for any $\zeta>0$, however, we refrain ourselves from considering the general case. Finally we conclude our paper with Section \ref{conclusion} consisting of some important remarks and thoughts for further work.

\section{Preliminary Results}\label{prelim}

	We require several asymptotic formulas.
The asymptotic formulas for the Bessel functions $J_{\nu}(z)$ and $K_{\nu}(z)$, as $|z|\to\infty$, $|\arg(z)|<\pi$, are given by \cite[pp.~199, 202]{watson-1966a}
\begin{align}
J_{\nu}(z)&\sim \left(\frac{2}{\pi z}\right)^{\tf12}\bigg(\cos w\sum_{m=0}^{\infty}\frac{(-1)^m(\nu, 2m)}{(2z)^{2m}} -\sin w\sum_{m=0}^{\infty}\frac{(-1)^m(\nu, 2m+1)}{(2z)^{2m+1}}\bigg),\nonumber\\
K_{\nu}(z)&\sim \left(\frac{\pi}{2 z}\right)^{\tf12}e^{-z}\sum_{m=0}^{\infty}\frac{(\nu, m)}{(2z)^{m}}.\label{kcom1}
\end{align}
Here $w=z-\tfrac{1}{2}\pi \nu-\tfrac{1}{4}\pi$, and
\begin{align*}
(\nu,m)=\frac{\Gamma(\nu+m+1/2)}{\Gamma(m+1)\Gamma(\nu-m+1/2)}.
\end{align*}
From \cite[p.~240, eq.~(9.54)]{temme}, for Re$(z)>0$ and $|z|$ large,
\begin{equation}\label{icom0}
I_{\nu}(z)\sim\frac{e^z}{\sqrt{2\pi z}}\sum_{m=0}^{\infty}(-1)^m\frac{(\nu,m)}{(2z)^m}.
\end{equation}
As mentioned in \cite[p.~240]{temme}, \eqref{icom0} is not valid for other complex values of $z$. In fact, in \cite[p.~203]{watson-1966a}, we find that for large values of $|z|$,
\begin{align}\label{icom}
I_{\nu}(z)\sim\frac{e^z}{\sqrt{2\pi z}}\sum_{m=0}^{\infty}(-1)^m\frac{(\nu,m)}{(2z)^m}+\frac{e^{-z\pm\left(\nu+\frac{1}{2}\right)\pi i}}{\sqrt{2\pi z}}\sum_{m=0}^{\infty}\frac{(\nu,m)}{(2z)^m},
\end{align}
where the plus sign in the exponential in the second expression on the right-hand side is taken when $-\frac{1}{2}\pi<\arg(z)<\frac{3}{2}\pi$ and the minus sign is taken when $-\frac{3}{2}\pi<\arg(z)<\frac{1}{2}\pi$.
As noted by Watson \cite[p.~203]{watson-1966a}, the discrepancy in the two expansions in \eqref{icom} for  $-\frac{1}{2}\pi<\arg(z)<\frac{1}{2}\pi$ is an example of Stokes' phenomenon and is only an apparent discrepancy. Thus, from \eqref{icom0}, for Re$(z)>0$ and $|z|$ large,
\begin{equation}\label{icom1}
I_{\nu}(z)\sim\frac{e^z}{\sqrt{2\pi z}}.
\end{equation}
We also record below a big-$O$ bound for $r_k(n), k\geq 2$. From \cite[p.~155, Theorem 1]{grosswald} or \cite[eq.~(5)]{rankin}, for $k\geq 5$,
\begin{equation}\label{rknbd5}
r_k(n)=O_{k}(n^{\frac{k}{2}-1}).
\end{equation}
Now Jacobi's four squares theorem \cite[p.~116, Theorem 11.1]{williams} implies that
\begin{equation*}
r_4(n)=8\sigma(n)-32\sigma(n/4),
\end{equation*}
where $\sigma(n)=\sum_{d|n}d$, which along with the elementary fact that $\sigma(n)\leq n(\log n+1)$, implies
\begin{equation}\label{rknbd4}
r_4(n)=O(n\log n).
\end{equation}
One can actually obtain a much better bound, namely $O(n\log\log n)$, by using Robin's inequality \cite[Theorem 2]{robin}.
Jacobi's two squares theorem \cite[p.~79, Theorem 9.3]{williams} is given by
\begin{align*}
r_2(n)=4(d_{1,4}(n)-d_{3,4}(n)),
\end{align*}
where $d_{j,\ell}(n)$ denotes the number of divisors of $n$ congruent to $j\pmod\ell$. Thus,
\begin{align}\label{rknbd2}
r_2(n)=O(d(n))=O(n^{\epsilon})
\end{align}
for any $\epsilon>0$, where the last step follows from \cite[p.~343, Thm.~315]{itn}. It is known \cite[p.~274]{cohen1975}, \cite[p.~134, Thm.~ 8.5]{onoweb} that $r_3(n)$ can be expressed in terms of $H(-n)$, the class number of quadratic forms of discriminant $-n$. If $-n<0$ is a fundamental discriminant, then along with the bound in \cite[Proposition 6.2]{kimrouse}, this implies that
\begin{equation}\label{rknbd3}
r_3(n)=O(n^{1/2}\log n).
\end{equation}
If $-n$ is not a fundamental discriminant, then the formula in \cite[p.~273, Definition 2.2 c)]{cohen1975} with $r=1$ (see also \cite[p.~133, eq.~ (8.1)]{onoweb}) can be used to again obtain \eqref{rknbd3}.

Finally, \eqref{rknbd5}, \eqref{rknbd4}, \eqref{rknbd2} and \eqref{rknbd3} imply that for $k\geq 2$, 
\begin{equation}\label{rknbd}
r_k(n)=O_{k}(n^{\frac{k}{2}-1+\epsilon}),
\end{equation}
for every $\epsilon>0$.

\section{Proof of Theorem \ref{guirknsuminfthm}}\label{guicondition}
We now show that, when $k$ is a positive integer greater than $3$,  \eqref{guirknsuminfallk} follows from Theorem \ref{guirknsumthm}, provided that the functions $F$ and $G$, in addition to the hypotheses of Theorem \ref{guirknsumthm}, respectively satisfy \eqref{Fbigo} and \eqref{Gbigo}. Note that $m\geq 1$ when $k\geq 4$ and so the first hypothesis of Theorem \ref{guirknsumthm} is not vacuous. Also, as will be seen in the proof, \eqref{Fbigo} and \eqref{Gbigo} allow us to let $N\to\infty$ in the associated sums and integrals in \eqref{guirknsum}. However, when $k=2$ or $3$, the first hypothesis in Theorem \ref{guirknsumthm} is vacuous, and we need further conditions given at the end of Theorem \ref{guirknsuminfthm}. These cases are considered at the end of the proof.

Observe first that integrating both sides of \eqref{gee} with respect to $y$ from $0$ to $x$, interchanging the order of integration on the resulting right-hand side, and then employing the differentiation formula
\begin{equation*}
x^{(\nu-1)/2}J_{\nu-1}(2\pi\sqrt{xy})=\frac{1}{\pi\sqrt{y}}\frac{d}{dx}\left(x^{\nu/2}J_{\nu}(2\pi\sqrt{xy})\right),
\end{equation*}
which can readily be deduced from \cite[p.~926, \textbf{8.472.3}]{grn}, we arrive at
\begin{align*}
\int_{0}^{x}G(y)y^{\frac{k}{4}-\frac{1}{2}}\, dy&=\pi\int_{0}^{\infty}F(t)\int_{0}^{x}\frac{1}{\pi\sqrt{t}}\frac{d}{dy}\left(y^{k/4}J_{\frac{k}{2}}\left(2\pi\sqrt{yt}\right)\right)\, dy\, dt\nonumber\\
&=x^{k/4}\int_{0}^{\infty}t^{-1/2}J_{\frac{k}{2}}(2\pi\sqrt{xt})F(t)\, dt,
\end{align*}
thereby obtaining \eqref{fgrel}.

Note that if one of the limits in \eqref{guirknsum} exists, so does the other. To show that
\begin{equation}\label{tanseries}
\lim_{N\to\infty}\sum_{n=1}^{N}\left(1-\frac{n}{N}\right)^{m+1}r_k(n)n^{\frac{1}{2}-\frac{k}{4}}F(n)
=\sum_{n=1}^{\infty}r_k(n)n^{\frac{1}{2}-\frac{k}{4}}F(n),
\end{equation}
we use Tannery's theorem \cite[p.~136]{bromwich} stated below.

\begin{theorem}\label{tannery}
Assume
$\begin{displaystyle}
a_{n} := \lim\limits_{N \to \infty} a_{n}(N)
\end{displaystyle}$ satisfies $|a_{n}(N)|
\leq M_{n}$ with $\begin{displaystyle} \sum_{n=0}^{\infty} M_{n} < \infty
\end{displaystyle}$.
Then
\begin{equation*}\lim \limits_{N \to \infty} \sum_{n=0}^{N} a_{n}(N) =
\sum_{n=0}^{\infty} a_{n}.
\end{equation*}
\end{theorem}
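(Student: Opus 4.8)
The plan is to prove Tannery's theorem as the discrete counterpart of Lebesgue's dominated convergence theorem, via an $\epsilon/3$ argument that separates a finite ``head'' from an absolutely controlled ``tail.'' First I would observe that, since $|a_n(N)|\leq M_n$ holds for every $N$, letting $N\to\infty$ gives $|a_n|\leq M_n$; because $\sum_{n=0}^\infty M_n<\infty$, the series $S:=\sum_{n=0}^\infty a_n$ converges absolutely and the right-hand side of the asserted identity is well-defined. Writing $S_N:=\sum_{n=0}^N a_n(N)$, the goal is to show that $|S_N-S|\to 0$ as $N\to\infty$.

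Next I would fix $\epsilon>0$ and exploit the convergence of $\sum M_n$ to choose an index $K$ with $\sum_{n=K+1}^\infty M_n<\epsilon/3$. For $N>K$, I would split
$$S_N-S=\sum_{n=0}^{K}\bigl(a_n(N)-a_n\bigr)+\sum_{n=K+1}^{N}a_n(N)-\sum_{n=K+1}^{\infty}a_n,$$
so that the problem reduces to estimating the three pieces separately.

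The two tails are handled at once by the dominating bound: $\bigl|\sum_{n=K+1}^{N}a_n(N)\bigr|\leq\sum_{n=K+1}^{N}M_n<\epsilon/3$ and likewise $\bigl|\sum_{n=K+1}^{\infty}a_n\bigr|\leq\sum_{n=K+1}^{\infty}M_n<\epsilon/3$, both uniformly in $N$. The head is a fixed finite sum of $K+1$ terms, each satisfying $a_n(N)\to a_n$; hence I would choose $N_0$ so that $\bigl|\sum_{n=0}^{K}(a_n(N)-a_n)\bigr|<\epsilon/3$ for all $N>N_0$. Combining the three estimates yields $|S_N-S|<\epsilon$ for every $N>\max(K,N_0)$, which is the claim.

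I do not anticipate a genuine obstacle here, as this is a standard result; the only point requiring care is that the summability hypothesis $\sum M_n<\infty$ must be used to make the tail control uniform in $N$ \emph{before} one invokes the termwise convergence in the finite head, so that the two limiting processes---in the parameter $N$ and in the summation index---do not interfere.
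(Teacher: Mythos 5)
Your proof is correct: the $\epsilon/3$ decomposition into a finite head (controlled by termwise convergence) and two tails (controlled uniformly in $N$ by $\sum_{n>K}M_n<\epsilon/3$) is exactly the standard argument for Tannery's theorem. Note that the paper does not prove this statement at all --- it quotes it as a classical result from Bromwich's \emph{An Introduction to the Theory of Infinite Series} --- so there is no in-paper proof to compare against; your argument is essentially the textbook one being cited.
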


For our application,
\begin{equation*}
a_n(N):=\left(1-\frac{n}{N}\right)^{m+1}r_k(n)n^{\frac{1}{2}-\frac{k}{4}}F(n).
\end{equation*}
Clearly,
\begin{equation*}
\lim_{N\to\infty}a_n(N)=r_k(n)n^{\frac{1}{2}-\frac{k}{4}}F(n).
\end{equation*}
Moreover, from \eqref{rknbd},
\begin{equation*}
r_k(n)\leq L_{k}n^{k/2-1+\epsilon},
\end{equation*}
where $L_{k}$ is a constant depending on $k$ but not on $n$. Also,
\begin{equation*}
\left|1-\frac{n}{N}\right|^{m+1}\leq 2^{m+1}\leq 2^{k/2},
\end{equation*}
since $n\leq N$ and $m=\left\lfloor\frac{1}{2}k\right\rfloor-1$. Hence,
\begin{align*}
|a_n(N)|&\leq 2^{k/2}L_{k}n^{\frac{k}{2}-1+\frac{1}{2}-\frac{k}{4}+\epsilon}|F(n)|.
\end{align*}
Let $M_n:=2^{k/2}L_kn^{\frac{k}{4}-\frac{1}{2}+\epsilon}F(n)$. Now $\sum_{n=0}^{\infty}M_n$ converges if  $F(x)=O_{k}\left(x^{-\frac{k}{4}-\frac{1}{2}-\epsilon-\delta}\right)$ for some $\d>0$. Thus assuming \eqref{Fbigo}, we see that \eqref{tanseries} holds.

Next, in order to show that
\begin{equation}\label{tanint}
\lim_{N\to\infty}\int_{0}^{N}\left(1-\frac{x}{N}\right)^{m+1}x^{\frac{k}{4}-\frac{1}{2}}F(x)\, dx=\int_{0}^{\infty}x^{\frac{k}{4}-\frac{1}{2}}F(x)\, dx,
\end{equation}
we employ the integral analogue of Tannery's theorem \cite[p.~485--486]{bromwich}.

\begin{theorem}\label{intanaltan}
If $\lim_{N\to\infty}u(x, N)=v(x)$ and $\lim_{N\to\infty}\eta_N=\infty$, then
\begin{equation*}
\lim_{N\to\infty}\int_{a}^{\eta_N}u(x, N)\, dx=\int_{a}^{\infty}v(x)\, dx,
\end{equation*}
provided that $u(x, N)$ tends to its limit $v(x)$ uniformly in any fixed interval, and there exists a positive function $T(x)$ such that $|u(x, N)|\leq T(x)$ for all $N$, where $\int_{a}^{\infty}T(x)\, dx$ converges.
\end{theorem}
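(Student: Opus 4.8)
The plan is to treat Theorem~\ref{intanaltan} as a dominated-convergence statement adapted to a moving upper limit of integration. First I would note that passing to the limit $N\to\infty$ in the domination hypothesis $|u(x,N)|\le T(x)$ forces $|v(x)|\le T(x)$ as well; hence both $\int_a^\infty v(x)\,dx$ and $\int_a^\infty T(x)\,dx$ converge absolutely, so the right-hand side is well defined and its tail can be made small.

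Fix $\epsilon>0$. Since $\int_a^\infty T(x)\,dx$ converges, choose $B>a$ so that $\int_B^\infty T(x)\,dx<\epsilon$. Because $\eta_N\to\infty$, for all sufficiently large $N$ we have $\eta_N>B$, and I would then split each integral at $B$ and compare:
\begin{align*}
\left|\int_a^{\eta_N}u(x,N)\,dx-\int_a^\infty v(x)\,dx\right|
&\le\int_a^B\left|u(x,N)-v(x)\right|\,dx\\
&\quad+\int_B^{\eta_N}\left|u(x,N)\right|\,dx+\int_B^\infty\left|v(x)\right|\,dx.
\end{align*}
The two tail terms are controlled by the dominating function, uniformly in $N$: using $|u(x,N)|\le T(x)$ and $|v(x)|\le T(x)$, and enlarging the middle integral from $[B,\eta_N]$ to $[B,\infty)$, each is bounded by $\int_B^\infty T(x)\,dx<\epsilon$.

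For the finite piece on $[a,B]$ I would invoke the uniform-convergence hypothesis: there is an $N_0$ such that $|u(x,N)-v(x)|<\epsilon/(B-a)$ for every $x\in[a,B]$ and every $N\ge N_0$, whence $\int_a^B|u(x,N)-v(x)|\,dx<\epsilon$. Combining the three bounds shows that the displayed difference is less than $3\epsilon$ whenever $N$ is large enough that both $\eta_N>B$ and $N\ge N_0$ hold; since $\epsilon>0$ was arbitrary, the theorem follows. The one subtlety — and the main obstacle — is the moving endpoint $\eta_N$, which blocks a direct appeal to ordinary dominated convergence on a fixed domain. This is resolved precisely by the integrable majorant: the contribution of $[B,\eta_N]$ is small uniformly in $N$ no matter where $\eta_N$ lands, so the fixed cutoff $B$ cleanly decouples the tail estimate from the uniform-convergence estimate on $[a,B]$.
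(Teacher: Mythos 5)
Your proof is correct. The paper itself gives no proof of this statement---it is quoted verbatim from Bromwich \cite[pp.~485--486]{bromwich} as a known integral analogue of Tannery's theorem---so there is no argument in the paper to compare against; your $\epsilon/3$ decomposition (fixed cutoff $B$ chosen from the integrable majorant, uniform convergence on the compact piece $[a,B]$, domination of both tails by $\int_B^\infty T$) is the standard proof of this result and is complete. The only implicit assumption is that $u(\cdot,N)$ and $v$ are integrable on finite intervals, which is already built into the statement, since otherwise the integrals appearing there would not be defined.
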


In our case, $a=0, \eta_N=N$, and
\begin{align*}
u(x, N)&=\left(1-\frac{x}{N}\right)^{m+1}x^{\frac{k}{4}-\frac{1}{2}}F(x),\\
v(x)&=x^{\frac{k}{4}-\frac{1}{2}}F(x).
\end{align*}
Consider any fixed interval, say $[0, B]$. Note that since $F$ is continuous in $[0, B]$, $v(x)$ is bounded on this interval. Also, by the binomial theorem,
\begin{align}
\left(1-\frac{x}{N}\right)^{m+1}&=\sum_{j=0}^{m+1}\binom{m+1}{j}\left(-\frac{x}{N}\right)^{j}\nonumber
=1+O_{m, B}\left(\frac{1}{N}\right),
\end{align}
uniformly for all $x$ in $[0, B]$, as $N\to\infty$. Thus, $u(x, N)\to v(x)$ uniformly in any fixed interval.

Finally,  to apply Theorem \ref{intanaltan}, we need to show that there exists a positive function $T(x)$ such that $|u(x, N)|\leq T(x)$ for all $N$, where $\int_{a}^{\infty}T(x)\, dx$ converges. This is seen at once if we take $T(x)=|v(x)|$ and use \eqref{Fbigo}. Hence, \eqref{tanint} holds if \eqref{Fbigo} holds.

Similarly, \eqref{tanseries} and \eqref{tanint} hold with $F$ replaced by $G$ if \eqref{Gbigo} holds. This completes the proof of Theorem \ref{guirknsuminfthm} when $k\geq 4$.

When $k=2$ or $3$, then $m=0$. If we now examine the hypotheses in Theorem \ref{guirknsumthm}, we notice that the first condition is vacuous. The second condition requires $F$ to be in $L^{2}(0,\infty)$, but this does not necessitate $F$ to be continuous. But if $F$ is not continuous on $[0,\infty)$ and we change the values of $F$ over any set of Lebesgue measure zero, for example, over the set of all positive integers, then even though the right-hand side of \eqref{guirknsuminfallk} as well as the integral on the left side remain the same, the sum on the left side has a different value. However, since any two continuous functions which differ from each other over a set of Lebesgue measure zero must agree everywhere, when $k=2$ and $3$, we require $F$ to be continuous on $[0,\infty)$.

Requiring $F, xF'(x) \in L^{2}(0,\infty)$ ensures that we can perform an integration by parts in \eqref{gee}, so that an argument analogous to that in \cite[Section 2]{guinandsumself2} can be used. This completes the proof for all $k\geq 2$.

\section{Proofs of Theorem \ref{popgenrk} and Its Corollaries}\label{proofcoro}
In this section, we prove our main transformation in Theorem \ref{popgenrk} and then prove numerous corollaries that arise from it.%

\begin{proof}[Theorem \textup{\ref{popgenrk}}][]
Since $I_{\nu}(0)=0$ for Re$(\nu)>0$, it is easy to see that when $\a=\b$, both sides of \eqref{popgenrkeqn} are equal to zero. Hence, \eqref{popgenrkeqn} holds in this case.

For a positive integer $k\geq2$, Re$(\nu)>0$, and Re$(\sqrt{\a})\geq\textup{Re}(\sqrt{\b})>0$, let
\begin{align}\label{eff}
f(x)&:=\frac{1}{\sqrt{x+\a}\sqrt{x+\b}}\left(\frac{\sqrt{x+\a}
-\sqrt{x+\b}}{\sqrt{x+\a}+\sqrt{x+\b}}\right)^{\nu}
\left(\frac{1}{\sqrt{x+\a}}+\frac{1}{\sqrt{x+\b}}\right)^{k-2}\nonumber\\
&\quad\times\pFq21{\nu+1-\frac{k}{2}, 1-\frac{k}{2}}{\nu+1}{\left(\frac{\sqrt{x+\a}-\sqrt{x+\b}}{\sqrt{x+\a}
+\sqrt{x+\b}}\right)^{2}}.
\end{align}
Note that $f$ is continuous on $[0,\infty)$. We prove \eqref{popgenrkeqn} first for $\a>\b>0$ and $k\geq 2$. After this, we prove \eqref{popgenrkeqn} in its full generality, that is, for Re$(\sqrt{\a})\geq$ Re$(\sqrt{\b})>0$, by applying the principle of analytic continuation.

Let
\begin{equation}\label{F}
F(x)=x^{\frac{k}{4}-\frac{1}{2}}f(x),
\end{equation}
where $f$ is defined in \eqref{eff}. Then $F$ is continuous on $[0,\infty)$.
Let $u=\sqrt{y}, \mu=\frac{k}{2}-1, z^2=\a, w^2=\b$ and $\rho=2\pi\sqrt{x}$
in \eqref{koshfock}, so that by \eqref{gee}, for Re$(\sqrt{\a})> $Re$(\sqrt{\b})>0$
and Re$(\nu)>-\frac14(k+1)$,
\begin{align}\label{koshfockspl}
G(x)&=\pi\int_{0}^{\infty}y^{\frac{k}{4}-\frac{1}{2}}f(y)J_{\frac{k}{2}-1}(2\pi\sqrt{xy})\, dy\nonumber\\
&=2^{k-1}\pi^{\frac{k}{2}}x^{\frac{k}{4}-\frac{1}{2}}\frac{\G(\nu+1)}{\G\left(\nu+\frac{k}{2}\right)}
I_{\nu}(\pi(\sqrt{x\a}-\sqrt{x\b}))K_{\nu}(\pi(\sqrt{x\a}+\sqrt{x\b})).
\end{align}

We apply Theorem \ref{guirknsuminfthm} with $F$ and $G$ as given above. To do this, we first show that they satisfy the hypotheses of Theorem \ref{guirknsuminfthm}.
Since $f$ is analytic, and hence infinitely differentiable, $F(x), F'(x), F''(x),\dots,
F^{(2m-1)}(x)$ are integrals. Also, as $x\to\infty$,
\begin{equation}\label{fas}
f(x)\sim\frac{2^{k-2-2\nu}(\a-\b)^{\nu}}{x^{\nu+k/2}},
\end{equation}
since the hypergeometric function tends to $1$ as $x\to\infty$. Since Re$(\nu)>0$, from \eqref{F} and \eqref{fas}, it is easy to see that the bound in \eqref{Fbigo} holds.
From \eqref{icom1}, for Re$(\sqrt{\a})>$ Re$(\sqrt{\b})$ and large $n$,
\begin{equation}\label{icomu}
I_{\nu}(\pi(\sqrt{n\a}-\sqrt{n\b})\sim\frac{e^{\pi(\sqrt{n\a}-\sqrt{n\b})}}{\pi\sqrt{2(\sqrt{n\a}-\sqrt{n\b})}}.
\end{equation}
Also from \eqref{kcom1}, for $(\sqrt{\a}+\sqrt{\b})\in\mathbb{C}\backslash(-\infty,0)$ and large $n$,
{\allowdisplaybreaks\begin{align}\label{kcom}
K_{\nu}(\pi(\sqrt{n\a}+\sqrt{n\b}))\sim\frac{e^{-\pi(\sqrt{n\a}+\sqrt{n\b})}}{\sqrt{2(\sqrt{n\a}+\sqrt{n\b})}}.
\end{align}}%
Thus, since $\a\geq\b>0$, by \eqref{icomu} and \eqref{kcom}, the function $G$ defined in \eqref{koshfockspl} decays exponentially as $x\to\infty$. Therefore, $G$ satisfies \eqref{Gbigo}.

Next, we show that, for $0\leq r\leq 2m$, $x^rF^{(r)}(x)\in L^{2}(0,\infty)$.
It is easy to see that $f$ (and hence $F$) is analytic in a fixed small disk centered at the origin. Thus, for any non-negative integer $r$, $f^{(r)}$  (and hence $F^{(r)}$) is bounded in this disk. If we restrict $x$ to be non-negative and real, then the fact that $f^{(r)}(x)$ is continuous on $[0,\epsilon_0]$,  for some $\epsilon_0>0$, implies that $x^rf^{(r)}(x)$ (and hence $x^rF^{(r)}(x)$) belongs to $L^{2}([0,\epsilon_0])$. In fact, the continuity of $x^rf^{(r)}(x)$ on $[0,\infty)$ implies that $x^rf^{(r)}(x)$ (and hence $x^rF^{(r)}(x)$) belongs to $\in L^{2}([\epsilon_0,A])$, where $A$ is any large fixed real number. It remains to show that all functions of the form $x^rF^{(r)}(x)\in L^{2}([A,\infty))$. This is done next.

Note that $f$ (and hence $F$) is analytic at $\infty$, as can be seen from \eqref{eff},  the fact that Re$(\nu)>0$, and that the hypergeometric function tends to $1$ as $x\to\infty$. Also, $A$ lies inside a neighborhood of $\infty$. Thus, $F$ can be expanded as a power series in $1/x$, and in fact,
\begin{equation*}
F(x)=\sum_{\ell=0}^{\infty}\frac{c_{\ell}}{x^{\ell+\nu+\frac{k}{4}+\frac{1}{2}}},
\end{equation*}
where the constants $c_{\ell}$, $\ell\geq0$, depend on $\nu,k, \a$, and $\b$, and $c_0=2^{k-2-2\nu}(\a-\b)^{\nu}$, which can be seen from \eqref{fas}. Hence, for  $r\geq0$,
\begin{equation*}
x^rF^{(r)}(x)=\sum_{\ell=0}^{\infty}\frac{d_{\ell}}{x^{\ell+\nu+\frac{k}{4}+\frac{1}{2}}},
\end{equation*}
where the coefficients $d_{\ell}$, $\ell\geq0$, depend on $r, \nu,k, \a$, and $\b$. Hence,
\begin{equation*}
x^rF^{(r)}(x)=O_{r, \nu, k, \a, \b}\left(x^{-\nu-\frac{k}{4}-\frac{1}{2}}\right)
\end{equation*}
on the interval $[A, \infty)$. Since Re$(\nu+\frac{k}{4}+\frac{1}{2})>\frac12$, we see that $x^rF^{(r)}(x)\in L^{2}([A,\infty))$.

We have thus shown that $x^rF^{(r)}(x)\in L^{2}((0,\infty))$ for any $r$, in particular, for $0\leq r\leq 2m$. Also, when $k=2, 3$, the additional condition $xF'(x)\in L^{2}((0,\infty))$ is similarly seen to be true.
Thus, the hypotheses of Theorem \ref{guirknsuminfthm} are satisfied.

Hence, from Theorem \ref{guirknsuminfthm}, \eqref{eff}, \eqref{F}, and \eqref{koshfockspl}, we find that
{\allowdisplaybreaks\begin{align}\label{befcov1}
&\sum_{n=1}^{\infty}\frac{r_k(n)}{\sqrt{n+\a}\sqrt{n+\b}}\left(\frac{\sqrt{n+\a}
-\sqrt{n+\b}}{\sqrt{n+\a}+\sqrt{n+\b}}\right)^{\nu}
\left(\frac{1}{\sqrt{n+\a}}+\frac{1}{\sqrt{n+\b}}\right)^{k-2}\nonumber\\
&\quad\times\pFq21{\nu+1-\frac{k}{2}, 1-\frac{k}{2}}{\nu+1}{\left(\frac{\sqrt{n+\a}-\sqrt{n+\b}}{\sqrt{n+\a}
+\sqrt{n+\b}}\right)^{2}}-\frac{\pi^{\frac{k}{2}}}{\G(\frac{k}{2})}\int_{0}^{\infty}x^{\frac{k}{2}-1}f(x)\, dx\nonumber\\
&=2^{k-1}\pi^{k/2}\frac{\G(\nu+1)}{\G(\nu+\frac{k}{2})}\sum_{n=1}^{\infty}r_k(n)I_{\nu}(\pi(\sqrt{n\a}-\sqrt{n\b}))
K_{\nu}(\pi(\sqrt{n\a}+\sqrt{n\b}))\nonumber\\
&\quad-\frac{\pi^{\frac{k}{2}}}{\G(\frac{k}{2})}\int_{0}^{\infty}x^{\frac{k}{4}-\frac{1}{2}}G(x)\, dx.
\end{align}}



We now evaluate the two integrals occurring in \eqref{befcov1}.
For Re$(\mu+\nu+1)>0$, Re$(a)>0$, and $|b|<|a|$ \cite[p.~385, eq.~(3)]{watson-1966a},
\begin{equation}\label{watbes}
\int_{0}^{\infty}e^{-at}J_{\nu}(bt)t^{\mu-1}\, dt=\frac{(b/(2a))^{\nu}\G(\mu+\nu)}{a^{\mu}\G(\nu+1)}\left(1+\frac{b^2}{a^2}\right)^{\tfrac{1}{2}-\mu}
\pFq21{\frac{\nu-\mu+1}{2},\frac{\nu-\mu}{2}+1}{\nu+1}{-\frac{b^2}{a^2}}.
\end{equation}
 However, as given in \cite[p.~385]{watson-1966a}, by analytic continuation in $b$, \eqref{watbes} is valid for Re$(a\pm ib)>0$. We would like to substitute $b=i$ and $a=1/u$ in \eqref{watbes}, where
\begin{align}\label{u}
u=\frac{\a-\b}{2y+\a+\b},\hspace{5mm}y\geq 0.
\end{align}
 Since $\a>\b>0$, the condition $\text{Re}(a)=\text{Re}(1/u)>1$ is obviously satisfied. We now use,  from \eqref{besseli}, the relation $I_{\nu}(t)=i^{-\nu}J_{\nu}(it)$,
   to deduce that, for Re$(\mu+\nu+1)>0$, \footnote{This integral evaluation is also given in \cite{kosh34}, but with less details.}
\begin{align}\label{watkosh}
\int_{0}^{\infty}e^{-t/u}I_{\nu}(t)t^{\mu}\, dt=\frac{u^{\nu+\mu+1}\G(\nu+\mu+1)}{2^{\nu}\G(\nu+1)(1-u^2)^{\mu+1/2}}\cdot\pFq21{\frac{\nu-\mu}{2}, \frac{\nu-\mu+1}{2}}{\nu+1}{u^2}.
\end{align}
Also from \cite[p.~391]{olver-2010a}, for $|z|<1$,
\begin{equation}\label{hyptra0}
\pFq21{a,b}{a-b+1}{z}=(1+z)^{-a}\pFq21{\frac{1}{2}a,\frac{1}{2}(a+1)}{a-b+1}{\frac{4z}{(1+z)^2}}.
\end{equation}
Since $u<1$, we see that $\left|(1-\sqrt{1-u^2})/(1+\sqrt{1-u^2})\right|<1$. Hence, let  $a=\nu-\mu$, $b=-\mu$, and  $z=(1-\sqrt{1-u^2})/(1+\sqrt{1-u^2})$ in \eqref{hyptra0} to find that
\begin{equation}\label{hyptra}
\pFq21{\frac{\nu-\mu}{2}, \frac{\nu-\mu+1}{2}}{\nu+1}{u^2}=\frac{2^{\nu-\mu}}{(1+\sqrt{1-u^2})^{\nu-\mu}}\pFq21{\nu-\mu, -\mu}{\nu+1}{\frac{1-\sqrt{1-u^2}}{1+\sqrt{1-u^2}}}.
\end{equation}
Substituting \eqref{hyptra} in \eqref{watkosh}, employing the representation \eqref{u} of $u$ in terms of $y, \a$ and $\b$, and using the elementary identities
$$(2y+\a+\b)^2-(\a-\b)^2=4(y+\a)(y+\b)$$
and
$$ 2y+\a+\b\pm2\sqrt{(y+\a)(y+\b)}=(\sqrt{y+\a}\pm\sqrt{y+\b})^2$$
in our simplification, we arrive at
\begin{align}\label{watkosh1}
\int_{0}^{\infty}e^{-t/u}I_{\nu}(t)t^{\mu}\, dt&=\frac{2^{-3\mu-1}}{\sqrt{y+\a}\sqrt{y+\b}}\frac{(\a-\b)^{\nu+\mu+1}\G(\nu+\mu+1)}{(\sqrt{y+\a}+\sqrt{y+\b})^{2\nu}\G(\nu+1)}
\left(\frac{1}{\sqrt{y+\a}}+\frac{1}{\sqrt{y+\b}}\right)^{2\mu}\nonumber\\
&\quad\times\pFq21{\nu-\mu, -\mu}{\nu+1}{\left(\frac{\sqrt{y+\a}-\sqrt{y+\b}}{\sqrt{y+\a}+\sqrt{y+\b}}\right)^2}.
\end{align}
Now we let $\mu=\frac{k}{2}-1$. With the choice of $f$ in \eqref{eff}, the use of \eqref{watkosh1}, the simple algebraic identity
$$ \left(\df{\sqrt{y+\a}-\sqrt{y+\b}}{\sqrt{y+\a}+\sqrt{y+\b}}\right)^{\nu}=\df{(\a-\b)^{\nu}}{(\sqrt{y+\a}+\sqrt{y+\b})^{2\nu}},$$ and an inversion in the order of integration, we deduce that,
 for Re$(\nu+\frac{k}{2})>0$,
\begin{align}\label{choi1}
\int_{0}^{\infty}y^{\frac{k}{2}-1}f(y)\, dy&=\frac{2^{\frac{3k}{2}-2}\G(\nu+1)}{(\a-\b)^{\frac{k}{2}}\G\left(\nu+\frac{k}{2}\right)}
\int_{0}^{\infty}y^{\frac{k}{2}-1}\int_{0}^{\infty}e^{-(2y+\a+\b)t/(\a-\b)}t^{\frac{k}{2}-1}I_{\nu}(t)\, dt\, dy\nonumber\\
&=\frac{2^{\frac{3k}{2}-2}\G(\nu+1)}{(\a-\b)^{\frac{k}{2}}\G\left(\nu+\frac{k}{2}\right)}
\int_{0}^{\infty}t^{\frac{k}{2}-1}e^{-\left(\frac{\a+\b}{\a-\b}\right)t}I_{\nu}(t)\, dt\int_{0}^{\infty}e^{-\frac{2yt}{(\a-\b)}}y^{\frac{k}{2}-1}\, dy\nonumber\\
&=\frac{2^{k-2}\G(\nu+1)\G\left(\frac{k}{2}\right)}{\G\left(\nu+\frac{k}{2}\right)}
\int_{0}^{\infty}e^{-\left(\frac{\a+\b}{\a-\b}\right)t}I_{\nu}(t)\, \frac{dt}{t},
\end{align}
where the inversion in order of integration can be easily justified with the use of \eqref{icom}, and where in the last step we used the integral representation for the gamma function, namely, for Re$(z)>0$, Re$(\l)>0$,
\begin{equation*}
\int_{0}^{\infty}e^{-\l t}t^{z-1}\, dt=\l^{-z}\G(z).
\end{equation*}


Now let $y=0$ and $\mu=-1$ in \eqref{watkosh1} to see that
\begin{align}\label{choi2}
\int_{0}^{\infty}e^{-\left(\frac{\a+\b}{\a-\b}\right)t}I_{\nu}(t)\,
\frac{dt}{t} =\frac{1}{\nu}\left(\frac{\sqrt{\a}-\sqrt{\b}}{\sqrt{\a}+\sqrt{\b}}\right)^{\nu}.
\end{align}
Thus, from \eqref{choi1} and \eqref{choi2}, for Re$(\nu)>0$,
\begin{align}\label{choi3}
\int_{0}^{\infty}y^{\frac{k}{2}-1}f(y)\, dy=\frac{2^{k-2}\G(\nu)\G\left(\frac{k}{2}\right)}{\G\left(\nu+\frac{k}{2}\right)}
\left(\frac{\sqrt{\a}-\sqrt{\b}}{\sqrt{\a}+\sqrt{\b}}\right)^{\nu}.
\end{align}

(Observe that if we set $k=2$ in \eqref{eff}, the
hypergeometric function therein reduces to 1. Therefore, a more elementary
proof of \eqref{choi3} can be established by the change of variable
$$t=-\log\left(\frac{\sqrt{y+\a}-\sqrt{y+\b}}{\sqrt{y+\a}+\sqrt{y+\b}}\right).$$
We leave the details to the reader.)


It remains to evaluate the integral on the right-hand side of \eqref{befcov1}. To that end,
we use the formula from \cite[p.~380--381, eq.~\textbf{2.16.28.1}]{pbm}. For $|\textup{Re}$ $b|<$ Re $c$, and $|\textup{Re}$ $\nu|<$ Re $(a+\nu)$,
\begin{align}\label{bc}
\int_{0}^{\infty}x^{a-1}I_{\nu}(bx)K_{\nu}(cx)\, dx=2^{a-2}(c^2-b^2)^{-a/2}\G\left(\frac{a}{2}\right)\G\left(\nu+\frac{a}{2}\right)P_{-a/2}^{-\nu}\left(\frac{c^2+b^2}{c^2-b^2}\right),
\end{align}
where $P_{\nu}^{\mu}(z)$ is the associated Legendre function of the first kind defined by \cite[p.~959, eq.~\textbf{8.702}]{grn}
\begin{equation}\label{eint2}
P_{\nu}^{\mu}(z):=\frac{1}{\G(1-\mu)}\left(\frac{z+1}{z-1}\right)^{\mu/2}\pFq21{-\nu, \nu+1}{1-\mu}{\frac{1-z}{2}}.
\end{equation}
Let $a=k$, $b=\pi(\sqrt{\a}-\sqrt{\b})$ and $c=\pi(\sqrt{\a}+\sqrt{\b})$ in \eqref{bc}. The conditions for the validity of \eqref{bc} are easily seen to hold. Employing \eqref{eint2} followed by an application of Pfaff's formula \cite[p.~390]{olver-2010a}, namely,
\begin{equation*}
\pFq21{a, b}{c}{z}=(1-z)^{-b}\pFq21{c-a, b}{c}{\frac{z}{z-1}},
\end{equation*}
we obtain, upon simplification,
\begin{align}\label{choi4}
&-\frac{\pi^{\frac{k}{2}}}{\G(\frac{k}{2})}\int_{0}^{\infty}x^{\frac{k}{4}-\frac{1}{2}}G(x)\, dx\nonumber\\
&=-\frac{1}{\sqrt{\a\b}}\left(\frac{\sqrt{\a}-\sqrt{\b}}{\sqrt{\a}+\sqrt{\b}}\right)^{\nu}
\left(\frac{1}{\sqrt{\a}}+\frac{1}{\sqrt{\b}}\right)^{k-2}\pFq21{\nu+1-\frac{k}{2}, 1-\frac{k}{2}}{\nu+1}{\left(\frac{\sqrt{\a}-\sqrt{\b}}{\sqrt{\a}
+\sqrt{\b}}\right)^{2}},
\end{align}
which, indeed, agrees with \eqref{b1}, as can be seen from \eqref{F}.

Substituting \eqref{choi3} and \eqref{choi4} into \eqref{befcov1}, we find that, for Re$(\nu)>0$,
\begin{align}\label{3.15}
&\sum_{n=0}^{\infty}\frac{r_{k}(n)}{\sqrt{n+\a}\sqrt{n+\b}}\left(\frac{\sqrt{n+\a}
-\sqrt{n+\b}}{\sqrt{n+\a}+\sqrt{n+\b}}\right)^{\nu}\nonumber\\
&\quad\times\left(\frac{1}{\sqrt{n+\a}}+\frac{1}{\sqrt{n+\b}}\right)^{k-2}
\pFq21{\nu+1-\frac{k}{2}, 1-\frac{k}{2}}{\nu+1}{\left(\frac{\sqrt{n+\a}
-\sqrt{n+\b}}{\sqrt{n+\a}+\sqrt{n+\b}}\right)^{2}}\nonumber\\
&=\frac{2^{k-2}\pi^{\frac{k}{2}}\G(\nu)}{\G\left(\nu+\frac{k}{2}\right)}
\left(\frac{\sqrt{\a}-\sqrt{\b}}{\sqrt{\a}+\sqrt{\b}}\right)^{\nu}\nonumber\\
&\quad+\frac{2^{k-1}\pi^{\frac{k}{2}}\G(\nu+1)}{\G\left(\nu+\frac{k}{2}\right)}
\sum_{n=1}^{\infty}r_k(n)I_{\nu}(\pi(\sqrt{n\a}-\sqrt{n\b}))K_{\nu}(\pi(\sqrt{n\a}+\sqrt{n\b})),
\end{align}%
which easily simplifies to \eqref{popgenrkeqn}. This completes the proof of \eqref{popgenrkeqn} for $\a\geq\b>0$ and $k\geq 2$.

By \eqref{icomu} and \eqref{kcom},
the series on the right-hand side of \eqref{3.15}
converges absolutely for Re$(\sqrt{\a})>$ Re$(\sqrt{\b})>0$. Note that if
Re$(\sqrt{\a})=$ Re$(\sqrt{\b})$,
then from \eqref{icom}, it is seen that
$$I_{\nu}\left(i\pi\sqrt{n}\left(\text{Im}(\sqrt{\a})
-\text{Im}(\sqrt{\b})\right)\right)=O_{\a,\b,\nu}(n^{-1/4}).$$
This, together with  \eqref{kcom},
implies that the series on the right-hand side of \eqref{3.15} again converges absolutely for Re$(\sqrt{\a})=$
Re$(\sqrt{\b})>0$. Using \eqref{eff} and \eqref{fas}, we see that the
right-hand side of \eqref{popgenrkeqn} is well-defined for
Re$(\sqrt{\a})\geq$ Re$(\sqrt{\b})>0$.

Fix any real positive $\b_0$, let $\Omega_{\b_{0}}$ represent the region in
the $\a$-complex plane given by  Re$(\sqrt{\a})>\sqrt{\b_0}>0$. The region
$\Omega_{\b_{0}}$ has, as its boundary, the parabola given by the equation
Re$(\a)=\b_0-\frac{1}{4\b_0}\textup{Im}(\a)^2$. Both sides of
\eqref{popgenrkeqn}, with $\b$ replaced by $\b_0$, are analytic in
$\Omega_{\b_{0}}$. Since $\Omega_{\b_{0}}$ is simply connected and since by
what we have proved so far, the two sides of \eqref{popgenrkeqn} coincide on
$\Omega_{\b_{0}}\cap\mathbb{R}$, it follows that \eqref{popgenrkeqn} holds
for all $\a\in\Omega_{\b_{0}}$.

Now fix any two complex numbers $\a_1$ and $\b_1$ satisfying
Re$(\sqrt{\a_1})>$ Re$(\sqrt{\b_1})>0$. Fix an arbitrary real positive number
$\b^{*}$ such that $\sqrt{\b^{*}}\leq$ Re$(\sqrt{\b_1})$. Note that $\a_1$
belongs to $\Omega_{\b^{*}}$ for every such $\b^{*}$. Therefore
\eqref{popgenrkeqn}, with $\a$ replaced by $\a_1$, holds with $\b$ replaced
by any such $\b^{*}$. Next, let $\mathfrak{D}_{\a_1}$ be the region in the
$\b$-complex plane given by Re$(\sqrt{\a_1})>$ Re$(\sqrt{\b})>0$. Here
$\mathfrak{D}_{\a_1}$ is simply connected with its boundary formed by the
parabola given by the equation Re$(\b)=$
Re$(\sqrt{\a_1})^2-\frac{1}{4\textup{Re}(\sqrt{\a_1})^2}$ Im$(\b)^2$ and the
negative real line. Both sides of \eqref{popgenrkeqn} are analytic on
$\mathfrak{D}_{\a_1}$ and coincide at all points $\b=\b^{*}$, with $\b^{*}$
as above, and hence they coincide everywhere inside $\mathfrak{D}_{\a_1}$.

Since $\b_1$ belongs to $\mathfrak{D}_{\a_1}$, it follows that
\eqref{popgenrkeqn} holds for $\a=\a_1$ and $\b=\b_1$, as desired. Lastly, by
continuity, \eqref{popgenrkeqn} also holds for all those complex numbers $\a$
and $\b$ for which Re$(\sqrt{\a})=$ Re$(\sqrt{\b})>0$. This completes the
proof of Theorem \ref{popgenrk}.
\end{proof}


\begin{remark} Let $\l,\d>0$, \textup{Re}$(\nu)>0$ and \textup{Re}$(z)>0$,
 Using \eqref{besselk} and \eqref{besseli}, we find that, for \textup{Re}$(\nu)>0$,
\begin{align*}
\lim_{z\to 0}I_{\nu}(\l z)K_{\nu}(\d z)&=\frac{\pi}{2\sin\nu\pi}\lim_{z\to
  0}\left(I_{\nu}(\l z)I_{-\nu}(\d z)-I_{\nu}(\l z)I_{\nu}(\d z
  )\right)\nonumber\\
&=\frac{\pi}{2\sin\nu\pi}\left(\frac{(\l/\d)^{\nu}}{\G(1+\nu)\G(1-\nu)}-0\right)\nonumber\\
&=\frac{1}{2\nu}\left(\frac{\l}{\d}\right)^{\nu},
\end{align*}
by using the reflection formula for the gamma function. Thus,
\begin{equation}\label{n0lim}
\lim_{x\to 0}I_{\nu}(\pi(\sqrt{x\a}-\sqrt{x\b}))K_{\nu}(\pi(\sqrt{x\a}+\sqrt{x\b}))
=\frac{1}{2\nu}\left(\frac{\sqrt{\a}-\sqrt{\b}}{\sqrt{\a}+\sqrt{\b}}\right)^{\nu},
\end{equation}
and so the identity in Theorem \ref{popgenrk} can also be written in the more compact form
\begin{align*}
&\sum_{n=0}^{\infty}r_k(n)I_{\nu}(\pi(\sqrt{n\a}-\sqrt{n\b}))K_{\nu}(\pi(\sqrt{n\a}+\sqrt{n\b}))\nonumber\\
&=\frac{\Gamma\left(\nu+\frac{k}{2}\right)}{\pi^{\frac{k}{2}}2^{k-1}
\Gamma(\nu+1)}\sum_{n=0}^{\infty}\frac{r_{k}(n)}{\sqrt{n+\a}\sqrt{n+\b}}
\left(\frac{\sqrt{n+\a}-\sqrt{n+\b}}{\sqrt{n+\a}+\sqrt{n+\b}}\right)^{\nu}\nonumber\\
&\quad\times\left(\frac{1}{\sqrt{n+\a}}+\frac{1}{\sqrt{n+\b}}\right)^{k-2}\pFq21{\nu+1-\frac{k}{2},
  1-\frac{k}{2}}{\nu+1}{\left(\frac{\sqrt{n+\a}-\sqrt{n+\b}}{\sqrt{n+\a}+\sqrt{n+\b}}\right)^{2}},
\end{align*}
with the $n=0$ term of the series on the left-hand side interpreted as the limit in \eqref{n0lim}.\\
\end{remark}

\begin{remark} We can generalize Theorem \ref{popgenrk} by replacing the
coefficients
$r_k(n)$ by any arithmetical function $a(n)$ generated by a Dirichlet series
satisfying a functional equation of the form \eqref{funcequa}.
\end{remark}

\begin{corollary}
For \textup{Re}$(\nu)>0$, Popov's identity \eqref{popovid} holds for a much larger region \textup{Re}$(\sqrt{\a})\geq$ \textup{Re}$(\sqrt{\b})>0$.
\end{corollary}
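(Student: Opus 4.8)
The plan is to derive Popov's identity \eqref{popovid} directly from the case $k=2$ of Theorem \ref{popgenrk}, with only an algebraic simplification in between. Since Theorem \ref{popgenrk} has already been established for \textup{Re}$(\sqrt{\a})\geq$ \textup{Re}$(\sqrt{\b})>0$ and \textup{Re}$(\nu)>0$, the enlarged region claimed in the corollary will be inherited automatically once the equivalence of the two identities is demonstrated.

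First I would put $k=2$ in \eqref{popgenrkeqn}. Two simplifications occur simultaneously: the factor $\left(\frac{1}{\sqrt{n+\a}}+\frac{1}{\sqrt{n+\b}}\right)^{k-2}$ reduces to $1$, and, because the upper parameter $1-\tfrac{k}{2}$ now equals $0$, the Gaussian hypergeometric function $\pFq21{\nu,0}{\nu+1}{\cdot}$ also reduces to $1$. The prefactor $\frac{\G(\nu+k/2)}{\pi^{k/2}2^{k-1}\G(\nu+1)}$ collapses to $\frac{1}{2\pi}$, so that \eqref{popgenrkeqn} becomes
\begin{align*}
&\sum_{n=1}^{\infty}r_2(n)I_{\nu}(\pi(\sqrt{n\a}-\sqrt{n\b}))K_{\nu}(\pi(\sqrt{n\a}+\sqrt{n\b}))\\
&=-\frac{1}{2\nu}\left(\frac{\sqrt{\a}-\sqrt{\b}}{\sqrt{\a}+\sqrt{\b}}\right)^{\nu}+\frac{1}{2\pi}\sum_{n=0}^{\infty}\frac{r_{2}(n)}{\sqrt{n+\a}\sqrt{n+\b}}\left(\frac{\sqrt{n+\a}-\sqrt{n+\b}}{\sqrt{n+\a}+\sqrt{n+\b}}\right)^{\nu}.
\end{align*}

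Next I would invoke the elementary identity $\left(\frac{\sqrt{n+\a}-\sqrt{n+\b}}{\sqrt{n+\a}+\sqrt{n+\b}}\right)^{\nu}=\frac{(\a-\b)^{\nu}}{(\sqrt{n+\a}+\sqrt{n+\b})^{2\nu}}$, which holds because the product of the two radical combinations equals $\a-\b$. Applying it to every summand on the right (and to the base-point term as well), I can factor out $(\a-\b)^{\nu}$ and then multiply both sides by $\frac{2\pi}{(\a-\b)^{\nu}}$. This turns the right-hand series into precisely the series appearing in \eqref{popovid}, and the $-\frac{1}{2\nu}(\cdots)^{\nu}$ term into $-\frac{\pi}{\nu(\sqrt{\a}+\sqrt{\b})^{2\nu}}$. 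It then remains only to reconcile the constants: I would split off the $n=0$ summand $\frac{1}{\sqrt{\a\b}(\sqrt{\a}+\sqrt{\b})^{2\nu}}$ and combine it with $-\frac{\pi}{\nu(\sqrt{\a}+\sqrt{\b})^{2\nu}}$ over the common denominator $(\sqrt{\a}+\sqrt{\b})^{2\nu}$, obtaining $\frac{\nu-\pi\sqrt{\a\b}}{\nu\sqrt{\a\b}(\sqrt{\a}+\sqrt{\b})^{2\nu}}$, which is exactly the constant in \eqref{popovid}.

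There is no real obstacle in this argument, since everything follows formally from Theorem \ref{popgenrk}; the only point demanding a little care is the bookkeeping of the two constant contributions, so that the power-of-$\nu$ term and the isolated $n=0$ term fuse into the single rational expression displayed by Popov. Because both sides of the resulting identity are valid throughout \textup{Re}$(\sqrt{\a})\geq$ \textup{Re}$(\sqrt{\b})>0$, \textup{Re}$(\nu)>0$, the corollary follows at once.
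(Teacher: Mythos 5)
Your proposal is correct and follows exactly the paper's route: the paper's proof is the one-line instruction ``Set $k=2$ in Theorem \ref{popgenrk} and simplify,'' and your write-up simply carries out that simplification in detail (the reduction of the ${}_2F_1$ and the $(\cdot)^{k-2}$ factor to $1$, the rewriting of the $\nu$-th power via $(\a-\b)^{\nu}/(\sqrt{n+\a}+\sqrt{n+\b})^{2\nu}$, and the fusion of the $n=0$ term with $-\tfrac{1}{2\nu}(\cdots)^{\nu}$ into Popov's constant). The bookkeeping checks out, so nothing is missing.
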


\begin{proof}
Set $k=2$ in Theorem \ref{popgenrk} and simplify using \eqref{n0lim}.
\end{proof}

\begin{corollary}\label{cor3.2}
For any positive integer $k\geq 2$ and \textup{Re}$(\sqrt{\a})\geq$ \textup{Re}$(\sqrt{\b})>0$,
\begin{align}\label{popgenrkhalf}
&\sum_{n=1}^{\infty}\frac{r_{k}(n)}{\sqrt{n}}e^{-\pi\sqrt{n}(\sqrt{\a}+\sqrt{\b})}\sinh(\pi\sqrt{n}(\sqrt{\a}-\sqrt{\b}))
\\
&=-\pi\left(\sqrt{\a}-\sqrt{\b}\right)-\frac{1}{2\pi^{(k-1)/2}}
\Gamma\left(\dfrac{k-1}{2}\right)\sum_{n=0}^{\infty}r_k(n)\left((n+\a)^{(1-k)/2}-(n+\b)^{(1-k)/2}\right).\notag
\end{align}
\end{corollary}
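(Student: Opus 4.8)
The plan is to obtain Corollary \ref{cor3.2} as the special case $\nu=\tf12$ of Theorem \ref{popgenrk}, after replacing the half-integer Bessel functions by elementary functions and collapsing the resulting $_2F_1$ into a difference of powers. Recall from \eqref{kspl} that $K_{1/2}(z)=\sqrt{\pi/(2z)}\,e^{-z}$, and from \eqref{besseli} that $I_{1/2}(z)=\sqrt{2/(\pi z)}\,\sinh z$. Writing $p=\pi\sqrt n(\sqrt\a-\sqrt\b)$ and $q=\pi\sqrt n(\sqrt\a+\sqrt\b)$, so that $pq=\pi^2 n(\a-\b)$, I would first record
\begin{equation*}
I_{1/2}(p)\,K_{1/2}(q)=\frac{1}{\pi\sqrt{n}\,\sqrt{\a-\b}}\,e^{-\pi\sqrt n(\sqrt\a+\sqrt\b)}\sinh\big(\pi\sqrt n(\sqrt\a-\sqrt\b)\big).
\end{equation*}
Hence the left-hand side of \eqref{popgenrkeqn} at $\nu=\tf12$ is exactly $\big(\pi\sqrt{\a-\b}\big)^{-1}$ times the series on the left of \eqref{popgenrkhalf}, so it suffices to multiply the whole of \eqref{popgenrkeqn} through by $\pi\sqrt{\a-\b}$ and simplify the right-hand side.

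For the first term on the right of \eqref{popgenrkeqn}, setting $\nu=\tf12$ gives $-\big((\sqrt\a-\sqrt\b)/(\sqrt\a+\sqrt\b)\big)^{1/2}$; multiplying by $\pi\sqrt{\a-\b}=\pi\big((\sqrt\a-\sqrt\b)(\sqrt\a+\sqrt\b)\big)^{1/2}$ and cancelling the factors $(\sqrt\a+\sqrt\b)^{\pm1/2}$ produces precisely $-\pi(\sqrt\a-\sqrt\b)$, matching the first term in \eqref{popgenrkhalf}.

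The substantive step is to show that the $n$th summand $U$ of the series on the right of \eqref{popgenrkeqn}, at $\nu=\tf12$, is a constant multiple of the difference $T:=(n+\a)^{(1-k)/2}-(n+\b)^{(1-k)/2}$. Put $S=\sqrt{n+\a}+\sqrt{n+\b}$ and $t=(\sqrt{n+\a}-\sqrt{n+\b})/S$, and record the elementary relations $t=(\a-\b)/S^2$, $\sqrt{n+\a}=\tf{S}{2}(1+t)$, $\sqrt{n+\b}=\tf{S}{2}(1-t)$, $1-t^2=4\sqrt{(n+\a)(n+\b)}/S^2$, and $t^{1/2}/S=t/\sqrt{\a-\b}$. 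Expressing both $U$ and $T=(S/2)^{1-k}\big[(1+t)^{1-k}-(1-t)^{1-k}\big]$ in the variables $S,t$, a short computation yields
\begin{equation*}
\frac{U}{T}=2^{k-1}\,\frac{t^{1/2}}{S\,\big[(1-t)^{k-1}-(1+t)^{k-1}\big]}\,\pFq21{\frac{3-k}{2},\frac{2-k}{2}}{\frac{3}{2}}{t^{2}}.
\end{equation*}
Here I would invoke the classical evaluation
\begin{equation*}
(1+t)^{k-1}-(1-t)^{k-1}=2(k-1)\,t\,\pFq21{\frac{3-k}{2},\frac{2-k}{2}}{\frac{3}{2}}{t^{2}},
\end{equation*}
which is the instance $a=\tf{k-1}{2}$ of the standard identity $(1+t)^{2a}-(1-t)^{2a}=4at\,{}_2F_1(\tf12-a,1-a;\tf32;t^2)$ and may be checked directly by matching the binomial coefficients $\binom{2a}{2m+1}$ against the hypergeometric coefficients. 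With this identity the $_2F_1$ factor cancels and, using $t^{1/2}/S=t/\sqrt{\a-\b}$, the ratio $U/T$ collapses to the constant $-2^{k-2}/\big((k-1)\sqrt{\a-\b}\big)$. Recognizing and verifying this hypergeometric collapse is the step I expect to be the main obstacle; the remainder is bookkeeping.

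Finally I would track the constants. Collecting the prefactor $\Gamma(\nu+\tf{k}{2})/(\pi^{k/2}2^{k-1}\Gamma(\nu+1))$ at $\nu=\tf12$ with $\Gamma(\tf32)=\tf{\sqrt\pi}{2}$, the constant $-2^{k-2}/\big((k-1)\sqrt{\a-\b}\big)$ from the collapse, and the overall factor $\pi\sqrt{\a-\b}$, the two factors of $\sqrt{\a-\b}$ cancel and the numerical coefficient simplifies via $\Gamma(\tf{k+1}{2})=\tf{k-1}{2}\Gamma(\tf{k-1}{2})$ to exactly $-\tf{1}{2}\pi^{-(k-1)/2}\Gamma(\tf{k-1}{2})$, the coefficient in \eqref{popgenrkhalf}. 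The convergence of both series and the validity throughout $\textup{Re}(\sqrt\a)\geq\textup{Re}(\sqrt\b)>0$ are inherited directly from Theorem \ref{popgenrk}, so no separate analytic-continuation argument is needed; one should only note that the $n=0$ term survives on the right of \eqref{popgenrkhalf} because it is already present as the $n=0$ term of the series in \eqref{popgenrkeqn}.
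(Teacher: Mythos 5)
Your proposal is correct and follows essentially the same route as the paper: set $\nu=\tfrac12$ in Theorem \ref{popgenrk}, replace $I_{1/2}$ and $K_{1/2}$ by their elementary forms, and collapse the ${}_2F_1$ with parameters $\tfrac{3-k}{2},\tfrac{2-k}{2};\tfrac32$ into the difference of powers $(1\pm t)^{k-1}$. The only cosmetic difference is that the paper quotes this hypergeometric evaluation from Prudnikov--Brychkov--Marichev \cite[p.~389, eq.~(107)]{prudv3}, whereas you re-derive it from the binomial theorem; the constants and the final coefficient $-\tfrac{1}{2}\pi^{-(k-1)/2}\Gamma(\tfrac{k-1}{2})$ all check out.
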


\begin{proof}
Let $\nu=\tf12$ in Theorem \ref{popgenrk}. From \cite[p.~80, eq.~(10)]{watson-1966a},
\begin{equation}\label{ispl}
I_{\frac{1}{2}}(\pi(\sqrt{n\a}-\sqrt{n\b}))=\frac{\sqrt{2}}{\pi}\frac{\sinh(\pi(\sqrt{n\a}-\sqrt{n\b}))}{\sqrt{\sqrt{n\a}-\sqrt{n\b}}},
\end{equation}
and from \eqref{kspl},
\begin{equation}\label{kspla}
K_{\frac{1}{2}}(\pi(\sqrt{n\a}+\sqrt{n\b}))=\frac{1}{\sqrt{2(\sqrt{n\a}+\sqrt{n\b})}}e^{-\pi(\sqrt{n\a}+\sqrt{n\b})}.
\end{equation}
Next, from \cite[p.~389, eq.~(107)]{prudv3},
\begin{equation}\label{dizzy}
\pFq21{a,a+\frac{1}{2}}{\frac{3}{2}}{z}=\frac{1}{2(2a-1)\sqrt{z}}\left\{(1-\sqrt{z})^{1-2a}-(1+\sqrt{z})^{1-2a}\right\}.
\end{equation}
Let $a=1-\frac{k}{2}$ and
$z=\left(\frac{\sqrt{x+\a}-\sqrt{x+\b}}{\sqrt{x+\a}+\sqrt{x+\b}}\right)^2$ in
\eqref{dizzy}  and simplify to find that
\begin{align}\label{hspl}
&\pFq21{1-\frac{k}{2},\frac{3-k}{2}}{\frac{3}{2}}{\left(\frac{\sqrt{x+\a}-\sqrt{x+\b}}{\sqrt{x+\a}+\sqrt{x+\b}}\right)^2}\nonumber\\
&=\frac{2^{k-2}}{(1-k)}\frac{\left(\sqrt{x+\a}+\sqrt{x+\b}\right)^{2-k}\left((x+\b)^{(k-1)/2}
-(x+\a)^{(k-1)/2}\right)}{\left(\sqrt{x+\a}-\sqrt{x+\b}\right)}.
\end{align}
Now use \eqref{ispl}, \eqref{kspla} and \eqref{hspl} in \eqref{popgenrkeqn} to arrive at \eqref{popgenrkhalf}.
\end{proof}

\begin{remark} The special case $k=2$ of Corollary \ref{cor3.2} is equivalent to the following identity of Popov \cite[eq.~(11)]{popov1935}:
\begin{align*}
&\frac{1}{\sqrt{\b}}-\frac{1}{\sqrt{\a}}+\sum_{n=1}^{\infty}r_2(n)\left(\frac{1}{\sqrt{n+\b}}-\frac{1}{\sqrt{n+\a}}\right)\nonumber\\
&=2\pi(\sqrt{\a}-\sqrt{\b})+\sum_{n=1}^{\infty}\frac{r_2(n)}{\sqrt{n}}\left(e^{-2\pi\sqrt{n\b}}-e^{-2\pi\sqrt{n\a}}\right).
\end{align*}
This identity should be compared with \eqref{ramdixfer}.
\end{remark}

\begin{corollary}\label{cordfpop}
For any positive integer $k>1$, \textup{Re}$(\sqrt{\b})>0$, and \textup{Re}$(\nu)>0$,
\begin{equation}\label{r2dix1}
\sum_{n=0}^{\infty}r_k(n)n^{\frac{\nu}{2}}K_{\nu}(2\pi\sqrt{n\b})
=\frac{\b^{\frac{\nu}{2}}\G\left(\nu+\frac{k}{2}\right)}{2\pi^{\nu+\frac{k}{2}}}
\sum_{n=0}^{\infty}\frac{r_{k}(n)}{(n+\b)^{\nu+\frac{k}{2}}}.
\end{equation}
\end{corollary}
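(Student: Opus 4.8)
The plan is to obtain \eqref{r2dix1} from the master transformation \eqref{popgenrkeqn} of Theorem \ref{popgenrk} by letting $\a\to\b^{+}$ through real values, mirroring exactly the passage from \eqref{popovid} to \eqref{r2dix} sketched in the introduction. Since $\textup{Re}(\nu)>0$, every term on each side of \eqref{popgenrkeqn} vanishes in this limit: on the left because $I_{\nu}(0)=0$, and on the right because $\left(\frac{\sqrt{n+\a}-\sqrt{n+\b}}{\sqrt{n+\a}+\sqrt{n+\b}}\right)^{\nu}=\frac{(\a-\b)^{\nu}}{(\sqrt{n+\a}+\sqrt{n+\b})^{2\nu}}\to 0$, so the identity degenerates to $0=0$. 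To extract nontrivial content I would first divide both sides of \eqref{popgenrkeqn} by $(\a-\b)^{\nu}$ and only afterwards pass to the limit; this normalization cancels precisely the factor that is responsible for the degeneration.

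The crux of the argument is justifying that the limit may be taken termwise inside each series, uniformly for $\a$ in a one-sided neighborhood $(\b,\b+\delta]$. On the left-hand side the asymptotics \eqref{icomu} and \eqref{kcom} show that, after division, the general summand is bounded by a constant multiple of $r_k(n)\,e^{-2\pi\sqrt{n\b}}$ times a mild power of $n$; together with the polynomial growth bound \eqref{rknbd}, this exponential decay supplies a summable dominating sequence independent of $\a$, so dominated convergence applies. On the right-hand side the normalized general term is dominated by a constant multiple of $r_k(n)(n+\b)^{-\nu-k/2}$, again uniformly in $\a$, which is summable for $\textup{Re}(\nu)>0$ by \eqref{rknbd}; hence the limit passes inside there as well. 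This uniform domination is the main obstacle; everything after it is bookkeeping.

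I would then compute the termwise limits. For the left side, the expansion \eqref{sumbesselj} for $I_{\nu}$ together with $\a-\b=(\sqrt{\a}-\sqrt{\b})(\sqrt{\a}+\sqrt{\b})$ gives
\begin{equation*}
\lim_{\a\to\b^{+}}\frac{I_{\nu}(\pi(\sqrt{n\a}-\sqrt{n\b}))}{(\a-\b)^{\nu}}=\frac{1}{\G(\nu+1)}\left(\frac{\pi\sqrt{n}}{4\sqrt{\b}}\right)^{\nu},
\end{equation*}
so, since $K_{\nu}(\pi(\sqrt{n\a}+\sqrt{n\b}))\to K_{\nu}(2\pi\sqrt{n\b})$, the left side tends to $\frac{1}{\G(\nu+1)}\left(\frac{\pi}{4\sqrt{\b}}\right)^{\nu}\sum_{n=1}^{\infty}r_k(n)n^{\nu/2}K_{\nu}(2\pi\sqrt{n\b})$. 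On the right, the isolated term $-\frac{1}{2\nu}\left(\frac{\sqrt{\a}-\sqrt{\b}}{\sqrt{\a}+\sqrt{\b}}\right)^{\nu}$ contributes, after division, $-\frac{1}{2\nu(\sqrt{\a}+\sqrt{\b})^{2\nu}}\to-\frac{1}{2^{2\nu+1}\nu\b^{\nu}}$, while in the series each $_2F_1$ tends to $1$ and the surviving factors collapse (using $1+\tfrac{k-2}{2}=\tfrac{k}{2}$), so the series tends to $\frac{\G(\nu+\frac{k}{2})}{\pi^{k/2}2^{2\nu+1}\G(\nu+1)}\sum_{n=0}^{\infty}r_k(n)(n+\b)^{-\nu-k/2}$.

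Finally I would equate these limits and multiply through by $\G(\nu+1)\left(\frac{4\sqrt{\b}}{\pi}\right)^{\nu}$. Using $4^{\nu}=2^{2\nu}$ and $\G(\nu+1)=\nu\G(\nu)$, the normalization constant on the left reduces to $1$, the right-hand prefactor simplifies to $\frac{\b^{\nu/2}\G(\nu+\frac{k}{2})}{2\pi^{\nu+k/2}}$, and the stray constant becomes exactly $-\frac{\G(\nu)}{2\pi^{\nu}\b^{\nu/2}}$. Transposing this constant and invoking $\lim_{x\to 0}x^{\nu/2}K_{\nu}(2\pi\sqrt{x\b})=\frac{\G(\nu)}{2\pi^{\nu}\b^{\nu/2}}$ to recognize it as the $n=0$ term of the left-hand series, I arrive at \eqref{r2dix1}, with the $n=0$ term of that series interpreted through the stated limit. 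This completes the derivation.
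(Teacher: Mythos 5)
Your proposal is correct and follows essentially the same route as the paper: divide \eqref{popgenrkeqn} by $(\a-\b)^{\nu}$, let $\a\to\b^{+}$ termwise, and recognize the resulting constant as the $n=0$ term of the left-hand series via $\lim_{x\to 0}x^{\nu/2}K_{\nu}(2\pi\sqrt{x\b})=\G(\nu)/(2\pi^{\nu}\b^{\nu/2})$. The only difference is that you spell out the dominated-convergence justification for the termwise limit, which the paper handles with a brief remark referring to the exponential decay of the summands.
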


\begin{proof}
Divide both sides of \eqref{popgenrkeqn} by $(\a-\b)^{\nu}$ and then let
$\a\to\b^{+}$. By  \eqref{besseli},
\begin{equation*}
\lim_{\a\to\b^{+}}\frac{I_{\nu}(\pi(\sqrt{n\a}-\sqrt{n\b}))K_{\nu}(\pi(\sqrt{n\a}+\sqrt{n\b}))}{(\a-\b)^{\nu}}
=\left(\frac{\pi}{4}\right)^{\nu}\left(\frac{n}{\b}\right)^{\frac{\nu}{2}}\frac{K_{\nu}(2\pi\sqrt{n\b})}{\Gamma(1+\nu)}.
\end{equation*}
Thus,
\begin{align}\label{b}
\frac{(\pi/4)^{\nu}}{\b^{\frac{\nu}{2}}\G(1+\nu)}\sum_{n=1}^{\infty}r_k(n)n^{\frac{\nu}{2}}K_{\nu}(2\pi\sqrt{n\b})
=-\frac{1}{\nu 2^{2\nu+1}\b^{\nu}}+\frac{\G(\nu+\frac{k}{2})}{\pi^{\frac{k}{2}}2^{2\nu+1}\G(\nu+1)}\sum_{n=0}^{\infty}
\frac{r_{k}(n)}{(n+\b)^{\nu+\frac{k}{2}}}.
\end{align}
Multiply both sides of \eqref{b}  by
$\b^{\frac{\nu}{2}}\G(1+\nu)/(\pi/4)^{\nu}$. By \eqref{besselk} and \eqref{besseli},
\begin{equation*}
\lim_{x\to 0}x^{\nu/2}K_{\nu}(2\pi\sqrt{x\b})=\frac{\G(\nu)}{2\pi^{\nu}\b^{\nu/2}}.
\end{equation*}
 Thus, the first term on the  right-hand side of \eqref{b} can be interpreted
 as the additive inverse of the $n=0$ term of the series on the  left-hand
 side of \eqref{b}. Hence, we arrive at \eqref{r2dix1}.
\end{proof}

Corollary \ref{cordfpop} was also established by Popov \cite[eq.~(6)]{popov1935}. As a special case, Corollary \ref{cordfpop}  implies the following formula in Corollary \ref{c}, which specializes, when $k=2$, to a identity of Hardy \cite[eq.~(2.12)]{hardyqjpam1915}, which he used in his study of the famous circle problem to prove that
$$\sum_{n\leq x}r_2(n)-\pi x=\Omega(x^{1/4}).$$
Corollary \ref{c} was also obtained by Popov \cite[eq.~(1)]{ocs}, but (with our $k$ replaced by his $\nu$) his condition $\text{Re}(\nu)>-\tfrac12$ should be replaced by $ \nu>0$.

\begin{corollary}\label{c}
For any positive integer $k>1$ and \textup{Re}$(\sqrt{\b})>0$,
\begin{equation}\label{hardygen}
\sum_{n=0}^{\infty}r_k(n)e^{-2\pi\sqrt{n\b}}=\sqrt{\b}\frac{\G\left(\frac12(k+1)\right)}{\pi^{(k+1)/2}}
\sum_{n=0}^{\infty}\frac{r_k(n)}{(n+\b)^{(k+1)/2}}.
\end{equation}
\end{corollary}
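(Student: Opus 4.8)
The plan is to obtain Corollary \ref{c} as the special case $\nu=\tf12$ of Corollary \ref{cordfpop}, reducing the modified Bessel function $K_{1/2}$ to an elementary exponential by means of \eqref{kspl}. I would begin with the already-established identity \eqref{r2dix1},
$$\sum_{n=0}^{\infty}r_k(n)n^{\frac{\nu}{2}}K_{\nu}(2\pi\sqrt{n\b})=\frac{\b^{\frac{\nu}{2}}\G\left(\nu+\frac{k}{2}\right)}{2\pi^{\nu+\frac{k}{2}}}\sum_{n=0}^{\infty}\frac{r_{k}(n)}{(n+\b)^{\nu+\frac{k}{2}}},$$
valid for $\textup{Re}(\sqrt{\b})>0$ and $\textup{Re}(\nu)>0$, and set $\nu=\tf12$ throughout.

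For the left-hand side, I would substitute $z=2\pi\sqrt{n\b}$ into \eqref{kspl} to obtain $K_{1/2}(2\pi\sqrt{n\b})=\tf12(n\b)^{-1/4}e^{-2\pi\sqrt{n\b}}$, so that $n^{1/4}K_{1/2}(2\pi\sqrt{n\b})=\tf12\b^{-1/4}e^{-2\pi\sqrt{n\b}}$; the $n$-dependent prefactor cancels and the left side collapses to $\frac{1}{2\b^{1/4}}\sum_{n=0}^{\infty}r_k(n)e^{-2\pi\sqrt{n\b}}$. For the right-hand side I would use $\nu+\frac{k}{2}=\frac{k+1}{2}$ to express the gamma factor, the power of $\pi$, and the exponent on $(n+\b)$ uniformly in terms of $\tf12(k+1)$. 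Equating the two sides and multiplying through by $2\b^{1/4}$ then yields \eqref{hardygen}, since $\b^{1/2}=\sqrt{\b}$; all of this is routine algebraic bookkeeping of constants.

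The one point requiring care is the $n=0$ term on the left side. In Corollary \ref{cordfpop} this term is defined through the limit $\lim_{x\to0}x^{\nu/2}K_{\nu}(2\pi\sqrt{x\b})=\G(\nu)/(2\pi^{\nu}\b^{\nu/2})$, which at $\nu=\tf12$ equals $\frac{1}{2\b^{1/4}}$ by $\G(\tf12)=\sqrt{\pi}$. I would check that this agrees with the value obtained directly from the exponential form, namely $\frac{1}{2\b^{1/4}}\,r_k(0)\,e^{0}=\frac{1}{2\b^{1/4}}$ (using $r_k(0)=1$). Since these coincide, the two interpretations of the $n=0$ term are consistent and no boundary correction is needed.

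I do not anticipate a genuine obstacle here: the content of the corollary is entirely carried by Corollary \ref{cordfpop}, and the remaining work is the substitution $\nu=\tf12$, the elementary closed form \eqref{kspl} for $K_{1/2}$, and the verification that the $n=0$ terms match. The only step worth displaying explicitly is this last consistency check, after which the identity follows immediately upon clearing the common factor $2\b^{1/4}$.
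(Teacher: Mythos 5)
Your proposal is correct and is exactly the paper's proof: set $\nu=\tfrac12$ in Corollary \ref{cordfpop} and use \eqref{kspl} to reduce $K_{1/2}$ to an exponential, after which the constants match up as you describe. Your explicit check that the $n=0$ term $\lim_{x\to0}x^{\nu/2}K_{\nu}(2\pi\sqrt{x\b})=\G(\nu)/(2\pi^{\nu}\b^{\nu/2})$ agrees at $\nu=\tfrac12$ with the value $\tfrac{1}{2}\b^{-1/4}$ coming from the exponential form is a welcome detail the paper leaves implicit.
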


\begin{proof}
Let $\nu=\tf12$ in Corollary \ref{cordfpop} and use \eqref{kspl} to obtain \eqref{hardygen} after simplification.
\end{proof}

\begin{corollary}
Let, for $0\leq|k|<1$,
\begin{align*}
K(k):=\int_{0}^{\frac{\pi}{2}}\frac{dt}{\sqrt{1-k^2\sin^{2}t}}
\end{align*}
be the complete elliptic integral of the first kind and
\begin{align*}
D(k):=\int_{0}^{\frac{\pi}{2}}\frac{\sin^{2}t\, dt}{\sqrt{1-k^2\sin^{2}t}}.
\end{align*}
Then, for \textup{Re}$(\sqrt{\a})\geq$ \textup{Re}$(\sqrt{\b})>0$,
\begin{align}\label{elliptickd}
&\sum_{n=1}^{\infty}r_3(n)I_{1}(\pi(\sqrt{n\a}-\sqrt{n\b}))K_{1}(\pi(\sqrt{n\a}+\sqrt{n\b}))\nonumber\\
&=-\frac{1}{2}\left(\frac{\sqrt{\a}-\sqrt{\b}}{\sqrt{\a}+\sqrt{\b}}\right)+\frac{(\a-\b)}{4\pi^2}
\sum_{n=0}^{\infty}\frac{r_3(n)}{(n+\a)(n+\b)}\nonumber\\
&\quad\times\left\{\frac{1}{\sqrt{n+\a}}K\left(\frac{\sqrt{\a-\b}}{\sqrt{n+\a}}\right)
-\frac{4n+2\a+2\b}{\left(\sqrt{n+\a}+\sqrt{n+\b}\right)^3}D\left(\frac{\sqrt{n+\a}-\sqrt{n+\b}}{\sqrt{n+\a}+\sqrt{n+\b}}\right)\right\}.
\end{align}
\end{corollary}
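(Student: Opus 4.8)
The plan is to specialize Theorem \ref{popgenrk} to $k=3$ and $\nu=1$, and then to recognize the resulting Gaussian hypergeometric function as a combination of the elliptic integrals $K$ and $D$. Setting $k=3$, $\nu=1$ in \eqref{popgenrkeqn}, the constant $\G(\nu+\tf{k}{2})/(\pi^{k/2}2^{k-1}\G(\nu+1))$ collapses to $\G(\tf52)/(4\pi^{3/2})=3/(16\pi)$, the factor $(1/\sqrt{n+\a}+1/\sqrt{n+\b})^{k-2}$ becomes linear, and the hypergeometric function reduces to $\pFq21{1/2,-1/2}{2}{z^2}$, where I abbreviate $z:=(\sqrt{n+\a}-\sqrt{n+\b})/(\sqrt{n+\a}+\sqrt{n+\b})$. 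Since the series $\sum r_3(n)I_1 K_1$ on the left and the term $-\tf12(\sqrt{\a}-\sqrt{\b})/(\sqrt{\a}+\sqrt{\b})$ are identical to those in \eqref{popgenrkeqn}, only the series of hypergeometric functions has to be rewritten. Using $1/\sqrt{n+\a}+1/\sqrt{n+\b}=(\sqrt{n+\a}+\sqrt{n+\b})/\sqrt{(n+\a)(n+\b)}$ and $\a-\b=(\sqrt{n+\a}-\sqrt{n+\b})(\sqrt{n+\a}+\sqrt{n+\b})$, this reduces, term by term, to a single identity expressing $\pFq21{1/2,-1/2}{2}{z^2}$ through $K$ and $D$.

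The key special-function ingredient is the identity
\begin{equation*}
2K(z)-(1+z^2)D(z)=\tf{3\pi}{4}\,\pFq21{1/2,-1/2}{2}{z^2}.
\end{equation*}
To prove it I would use the representation $K(z)=\tf{\pi}{2}\pFq21{1/2,1/2}{1}{z^2}$ together with $D(z)=\tf{\pi}{4}\pFq21{1/2,3/2}{2}{z^2}$, the latter obtained by expanding $(1-z^2\sin^2 t)^{-1/2}$ by the binomial series and integrating term by term. The displayed identity then amounts to the contiguous relation
\begin{equation*}
\pFq21{1/2,1/2}{1}{w}-\tf14(1+w)\pFq21{1/2,3/2}{2}{w}=\tf34\,\pFq21{1/2,-1/2}{2}{w},
\end{equation*}
which can be verified by comparing the coefficients of $w^m$ on both sides (equivalently, it follows from the relation $D=(K-E)/z^2$, with $E$ the complete elliptic integral of the second kind, together with the standard hypergeometric forms of $K$ and $E$).

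The two elliptic integrals in \eqref{elliptickd} carry different moduli, so the final step is to convert $K(z)$ to the modulus $\sqrt{\a-\b}/\sqrt{n+\a}$ appearing in the statement. Writing $k_1:=\sqrt{\a-\b}/\sqrt{n+\a}$, one has $k_1^2=(\a-\b)/(n+\a)$ and complementary modulus $k_1'=\sqrt{(n+\b)/(n+\a)}$, whence $z=(1-k_1')/(1+k_1')$; the descending Landen transformation $K(k_1)=\df{2}{1+k_1'}K(z)$ then gives
\begin{equation*}
2K(z)=(1+k_1')K(k_1)=\df{\sqrt{n+\a}+\sqrt{n+\b}}{\sqrt{n+\a}}\,K\left(\df{\sqrt{\a-\b}}{\sqrt{n+\a}}\right).
\end{equation*}
For the $D$-term I would use the elementary identity $1+z^2=(4n+2\a+2\b)/(\sqrt{n+\a}+\sqrt{n+\b})^2$, which produces exactly the factor $4n+2\a+2\b$ and the power $(\sqrt{n+\a}+\sqrt{n+\b})^3$ in \eqref{elliptickd} once the factor $\a-\b$ is restored. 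Collecting the numerical constants then yields \eqref{elliptickd}.

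The main obstacle is reconciling the two distinct moduli: the $D$-term naturally appears with modulus $z$, whereas the $K$-term in the final formula is forced to use $\sqrt{\a-\b}/\sqrt{n+\a}$, and bridging them requires the Landen transformation together with careful algebraic bookkeeping of the various radicals. A secondary point is that the classical elliptic-integral and Landen identities are stated for real moduli in $[0,1)$; I would therefore first establish \eqref{elliptickd} for $\a>\b>0$, where $0\le z<1$ and all quantities are real, and then extend it to $\textup{Re}(\sqrt{\a})\ge\textup{Re}(\sqrt{\b})>0$ by analytic continuation, exactly as in the proof of Theorem \ref{popgenrk}.
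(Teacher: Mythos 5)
Your proposal is correct and follows essentially the same route as the paper: specialize Theorem \ref{popgenrk} to $k=3$, $\nu=1$, rewrite $\pFq21{1/2,-1/2}{2}{z^2}$ as $\tfrac{4}{3\pi}\left(2K(z)-(1+z^2)D(z)\right)$, and apply Landen's transformation to convert $K(z)$ to the modulus $\sqrt{\a-\b}/\sqrt{n+\a}$ (your descending form of Landen is the same identity the paper cites in ascending form). The only differences are cosmetic: you derive the hypergeometric--elliptic identity from a contiguous relation rather than citing Prudnikov et al., and you make explicit the real-moduli-then-analytic-continuation step that the paper leaves implicit.
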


\begin{proof}
Let $k=3$ and $\nu=1$ in Theorem \ref{popgenrk}. Using the identity \cite[p.~395]{prudv3}
\begin{equation*}
\pFq21{-\frac{1}{2}, \frac{1}{2}}{2}{z}=\frac{4}{3\pi}\left(2K(\sqrt{z})-(1+z)D(\sqrt{z})\right),
\end{equation*}
and Landen's transformation \cite[p.~112]{ntsr}
\begin{align*}
K(k)=\frac{1}{(1+k)}K\left(\frac{2\sqrt{k}}{1+k}\right),\hspace{5mm}0\leq k<1,
\end{align*}
we arrive at \eqref{elliptickd}.
\end{proof}

\section{Analytically Continuing Theorem \ref{popgenrk} to R\MakeLowercase{e}$(\nu)>-1$}\label{ac}
Recall the definition of $\zeta_k(s)$ from \eqref{zeta} and the fact that it has an analytic continuation into the entire complex plane. Also, recall \eqref{popgenrkeqn}.
Note that for Re$(\sqrt{\a})\geq$ Re$(\sqrt{\b})>0$ and Re$(\nu)>0$, if we separate the $n=0$ term on the right-hand side of \eqref{popgenrkeqn} and use the aforementioned analytic continuation, we can rewrite \eqref{popgenrkeqn} in the form
{\allowdisplaybreaks\begin{align}\label{popgenrkeqnac}
&\sum_{n=1}^{\infty}r_k(n)I_{\nu}(\pi(\sqrt{n\a}-\sqrt{n\b}))K_{\nu}(\pi(\sqrt{n\a}+\sqrt{n\b}))\nonumber\\
&=-\frac{1}{2\nu}\left(\frac{\sqrt{\a}-\sqrt{\b}}{\sqrt{\a}+\sqrt{\b}}\right)^{\nu}
+\frac{2^{-2\nu-1}(\a-\b)^{\nu}\G\left(\nu+\frac{1}{2}k\right)}{\pi^{k/2}\G(\nu+1)}
\zeta_k(\nu+\tfrac{k}{2})\nonumber\\
&\quad+\frac{\Gamma\left(\nu+\frac{k}{2}\right)}{\pi^{\frac{k}{2}}2^{k-1}\Gamma(\nu+1)\sqrt{\a\b}}
\left(\frac{\sqrt{\a}-\sqrt{\b}}{\sqrt{\a}+\sqrt{\b}}\right)^{\nu}\left(\frac{1}{\sqrt{\a}}
+\frac{1}{\sqrt{\b}}\right)^{k-2}\notag\\&\quad\times\pFq21{\nu+1-\frac{k}{2}, 1-\frac{k}{2}}{\nu+1}{\left(\frac{\sqrt{\a}-\sqrt{\b}}{\sqrt{\a}+\sqrt{\b}}\right)^{2}}\nonumber\\
&\quad+\frac{\Gamma\left(\nu+\frac{k}{2}\right)}{\pi^{\frac{k}{2}}2^{k-1}\Gamma(\nu+1)}
\sum_{n=1}^{\infty}r_{k}(n)\bigg\{\frac{1}{\sqrt{n+\a}\sqrt{n+\b}}
\left(\frac{\sqrt{n+\a}-\sqrt{n+\b}}{\sqrt{n+\a}+\sqrt{n+\b}}\right)^{\nu}\nonumber\\
&\quad\times\left(\frac{1}{\sqrt{n+\a}}+\frac{1}{\sqrt{n+\b}}\right)^{k-2}\pFq21{\nu+1-\frac{k}{2}, 1-\frac{k}{2}}{\nu+1}{\left(\frac{\sqrt{n+\a}-\sqrt{n+\b}}{\sqrt{n+\a}+\sqrt{n+\b}}\right)^{2}}\nonumber\\
&\quad\quad-\frac{2^{k-2-2\nu}(\a-\b)^{\nu}}{n^{\nu+k/2}}\bigg\}.
\end{align}}
Now for large $n$, from the definition \eqref{2f1} of $_2F_1$,
\begin{align}\label{ases}
&\frac{1}{\sqrt{n+\a}\sqrt{n+\b}}\left(\frac{\sqrt{n+\a}-\sqrt{n+\b}}{\sqrt{n+\a}+\sqrt{n+\b}}\right)^{\nu}
\left(\frac{1}{\sqrt{n+\a}}+\frac{1}{\sqrt{n+\b}}\right)^{k-2}\nonumber\\
&\times\pFq21{\nu+1-\frac{k}{2}, 1-\frac{k}{2}}{\nu+1}{\left(\frac{\sqrt{n+\a}-\sqrt{n+\b}}{\sqrt{n+\a}+\sqrt{n+\b}}\right)^{2}}\notag\\
&=\frac{2^{k-2-2\nu}(\a-\b)^{\nu}}{n^{\nu+k/2}}+O_{\a, \b, \nu, k}\left(\frac{1}{n^{\nu+\frac{1}{2}k+1}}\right).
\end{align}
The main term in this asymptotic estimate was already obtained in \eqref{fas}.
From \eqref{ases}, we see that the series on the right-hand side of \eqref{popgenrkeqnac} converges for Re$(\nu)>-1$.  By analytic continuation, we conclude that \eqref{popgenrkeqnac} is valid for $\text{Re }\nu>-1.$  Thus, letting $\nu=0$ in \eqref{popgenrkeqnac}, we find that
{\allowdisplaybreaks\begin{align}\label{popgenrkeqnac0}
&\sum_{n=1}^{\infty}r_k(n)I_{0}(\pi(\sqrt{n\a}-\sqrt{n\b}))K_{0}(\pi(\sqrt{n\a}+\sqrt{n\b}))\nonumber\\
&=\lim_{\nu\to 0}\left(\frac{2^{-2\nu-1}(\a-\b)^{\nu}\G\left(\nu+\frac{1}{2}k\right)}{\pi^{k/2}\G(\nu+1)}
\zeta_k(\nu+\tfrac{k}{2})-\frac{1}{2\nu}\left(\frac{\sqrt{\a}-\sqrt{\b}}{\sqrt{\a}+\sqrt{\b}}\right)^{\nu}\right)
\nonumber\\
&\quad+\frac{\Gamma\left(\frac{k}{2}\right)}{\pi^{\frac{k}{2}}2^{k-1}\sqrt{\a\b}}
\left(\frac{1}{\sqrt{\a}}+\frac{1}{\sqrt{\b}}\right)^{k-2}\pFq21{1-\frac{k}{2}, 1-\frac{k}{2}}{1}{\left(\frac{\sqrt{\a}-\sqrt{\b}}{\sqrt{\a}+\sqrt{\b}}\right)^{2}}\nonumber\\
&\quad+\frac{\Gamma\left(\frac{k}{2}\right)}{\pi^{\frac{k}{2}}2^{k-1}}\sum_{n=1}^{\infty}r_{k}(n)
\Bigg\{\frac{1}{\sqrt{n+\a}\sqrt{n+\b}}\left(\frac{1}{\sqrt{n+\a}}+\frac{1}{\sqrt{n+\b}}\right)^{k-2}\nonumber\\
&\quad\quad\quad\quad\quad\times\pFq21{1-\frac{k}{2}, 1-\frac{k}{2}}{1}{\left(\frac{\sqrt{n+\a}-\sqrt{n+\b}}{\sqrt{n+\a}+\sqrt{n+\b}}\right)^{2}}-\frac{2^{k-2}}{n^{k/2}}\Bigg\}.
\end{align}}%

We now specialize \eqref{popgenrkeqnac0}.  The Dirichlet series for $r_k(n), k=2, 4, 6$ and $8$, are given by \cite[p.~102]{borweinchoi1}
{\allowdisplaybreaks\begin{align}
	\sum_{n=1}^{\infty}\frac{r_2(n)}{n^s}&=4\zeta(s)\mathfrak{B}(s), \hspace{3mm}\text{Re}(s)>1,\label{dsrk2}\\
	\sum_{n=1}^{\infty}\frac{r_4(n)}{n^s}&=8(1-4^{1-s})\zeta(s)\zeta(s-1), \hspace{3mm}\text{Re}(s)>2,\label{dsrk4}\\
	\sum_{n=1}^{\infty}\frac{r_6(n)}{n^s}&=16\zeta(s-2)\mathfrak{B}(s)-4\zeta(s)\mathfrak{B}(s-2), \hspace{3mm}\text{Re}(s)>3,\\
	\sum_{n=1}^{\infty}\frac{r_8(n)}{n^s}&=16(1-2^{1-s}+4^{2-s})\zeta(s)\zeta(s-3), \hspace{3mm}\text{Re}(s)>4,\label{dsrk8}
	\end{align}}%
where $\zeta(s)$ denotes the Riemann zeta function and $\mathfrak{B}(s)$ denotes the Dirichlet beta-function, defined for Re$(s)>0$, by
\begin{equation*} 	
\mathfrak{B}(s):=\sum_{n=0}^{\infty}\frac{(-1)^n}{(2n+1)^{s}}.
\end{equation*}
The function $\mathfrak{B}(s)$ is a Dirichlet $L$-function, and consequently it satisfies the functional equation \cite[p.~69]{davenport}
\begin{equation}\label{dbffunc}
\mathfrak{B}(1-s)=\left(\frac{2}{\pi}\right)^s\sin\left(\frac{\pi s}{2}\right)\G(s)\mathfrak{B}(s).
\end{equation}
For these four values of $k$, we can deduce the following four theorems. Since the line of reasoning is similar in the proofs of Theorems \ref{popgenrkac2} and \ref{popgenrkac6}, we provide the proof of only the former. Similarly, we prove only Theorem \ref{popgenrkac4}, because its proof is similar to that of Theorem \ref{popgenrkac8}.

\begin{theorem}\label{popgenrkac2}
Let $\gamma$ denote Euler's constant. For \textup{Re}$(\sqrt{\a})\geq$ \textup{Re}$(\sqrt{\b})>0$,
\begin{align}\label{popgenrkeqnac2}
&\sum_{n=1}^{\infty}r_2(n)I_{0}(\pi(\sqrt{n\a}-\sqrt{n\b}))K_{0}(\pi(\sqrt{n\a}+\sqrt{n\b}))\nonumber\\
&=\frac{1}{2\pi\sqrt{\a\b}}+\g+\log\left(\frac{\sqrt{\a}+\sqrt{\b}}{2}\right)+\frac{1}{2}\log\left(\frac{\pi}{2}\right)
-\mathfrak{B}'(0)\nonumber\\
&\quad+\frac{1}{2\pi}\sum_{n=1}^{\infty}r_2(n)\left(\frac{1}{(n+\a)^{1/2}(n+\b)^{1/2}}-\frac{1}{n}\right).
\end{align}
\end{theorem}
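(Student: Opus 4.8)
The plan is to obtain Theorem~\ref{popgenrkac2} by specializing the analytically continued identity \eqref{popgenrkeqnac0} to $k=2$ and then computing the $\nu\to 0$ limit explicitly. When $k=2$ the prefactor $(1/\sqrt{n+\a}+1/\sqrt{n+\b})^{k-2}$ equals $1$, and since the numerator parameters of the hypergeometric function both vanish, $\pFq21{0,0}{1}{z}=1$; hence every hypergeometric factor in \eqref{popgenrkeqnac0} collapses to $1$. Consequently the middle term of \eqref{popgenrkeqnac0} reduces to $1/(2\pi\sqrt{\a\b})$ and the series reduces to $\tfrac{1}{2\pi}\sum_{n=1}^{\infty}r_2(n)((n+\a)^{-1/2}(n+\b)^{-1/2}-n^{-1})$, which already reproduces the last two contributions on the right of \eqref{popgenrkeqnac2}. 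Thus the entire content of the theorem is contained in the evaluation of the limit term.

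To evaluate that limit, I would use the factorization \eqref{dsrk2}, which gives $\zeta_2(\nu+1)=4\zeta(\nu+1)\mathfrak{B}(\nu+1)$, and insert the expansions $\zeta(\nu+1)=\tfrac1\nu+\g+O(\nu)$ and $\mathfrak{B}(\nu+1)=\tfrac{\pi}{4}+\mathfrak{B}'(1)\nu+O(\nu^2)$ (using $\mathfrak{B}(1)=\pi/4$), together with $2^{-2\nu-1}(\a-\b)^{\nu}=\tfrac12(1+\nu\log\tfrac{\a-\b}{4})+O(\nu^2)$ and $((\sqrt{\a}-\sqrt{\b})/(\sqrt{\a}+\sqrt{\b}))^{\nu}=1+\nu\log\tfrac{\sqrt{\a}-\sqrt{\b}}{\sqrt{\a}+\sqrt{\b}}+O(\nu^2)$. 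Since for $k=2$ the two gamma factors in the first term of \eqref{popgenrkeqnac0} cancel, multiplying out shows that the two $1/(2\nu)$ poles cancel against each other. Collecting the finite parts and using $\a-\b=(\sqrt{\a}-\sqrt{\b})(\sqrt{\a}+\sqrt{\b})$ to combine the logarithms into $\log\tfrac{\sqrt{\a}+\sqrt{\b}}{2}$, I expect the limit to equal $\tfrac{\g}{2}+\tfrac{2\mathfrak{B}'(1)}{\pi}+\log\tfrac{\sqrt{\a}+\sqrt{\b}}{2}$.

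The last step converts the constant $\tfrac{\g}{2}+\tfrac{2\mathfrak{B}'(1)}{\pi}$ into the form $\g+\tfrac12\log\tfrac{\pi}{2}-\mathfrak{B}'(0)$ demanded by \eqref{popgenrkeqnac2}. For this I would differentiate the functional equation \eqref{dbffunc} and evaluate at $s=1$. Reading off $\mathfrak{B}(0)=\tfrac12$ from \eqref{dbffunc} at $s=1$, and using $\cot(\pi/2)=0$, $\psi(1)=-\g$, and $\mathfrak{B}(1)=\pi/4$, logarithmic differentiation yields $-\mathfrak{B}'(0)=\tfrac12(\log\tfrac{2}{\pi}-\g+\tfrac{4\mathfrak{B}'(1)}{\pi})$, that is, $\tfrac{2\mathfrak{B}'(1)}{\pi}=\tfrac{\g}{2}+\tfrac12\log\tfrac{\pi}{2}-\mathfrak{B}'(0)$. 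Substituting this relation into the limit produces precisely the constant appearing in \eqref{popgenrkeqnac2}, completing the proof.

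The main obstacle is purely computational but delicate: in the $\nu\to0$ expansion one must retain the $O(\nu)$ terms of $\zeta(\nu+1)$, $\mathfrak{B}(\nu+1)$, and both exponential prefactors, because the $1/\nu$ singularities cancel only once the finite parts are assembled correctly, and a dropped linear term would corrupt the constant. The second essential ingredient is the conversion of $\mathfrak{B}'(1)$ to $\mathfrak{B}'(0)$ through the differentiated functional equation \eqref{dbffunc}; without it the constant would not be recognizable as the one stated.
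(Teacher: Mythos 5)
Your proposal is correct and follows essentially the same route as the paper: specialize \eqref{popgenrkeqnac0} to $k=2$ (where the hypergeometric factors collapse to $1$), use $\zeta_2(s)=4\zeta(s)\mathfrak{B}(s)$, evaluate the $\nu\to0$ limit to get $\tfrac{\g}{2}+\log\tfrac{\sqrt{\a}+\sqrt{\b}}{2}+\tfrac{2}{\pi}\mathfrak{B}'(1)$, and convert $\mathfrak{B}'(1)$ into $\mathfrak{B}'(0)$ via the differentiated functional equation \eqref{dbffunc}. The only cosmetic difference is that you carry out the limit by Taylor expansions where the paper splits it into two stages and invokes L'Hopital's rule; the computations and all constants agree.
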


\begin{proof}
Letting $k=2$ in \eqref{popgenrkeqnac0} and using \eqref{dsrk2}, we find that
\begin{align*}
&\sum_{n=1}^{\infty}r_2(n)I_{0}(\pi(\sqrt{n\a}-\sqrt{n\b}))K_{0}(\pi(\sqrt{n\a}+\sqrt{n\b}))\nonumber\\
&=L+\frac{1}{2\pi\sqrt{\a\b}}+\frac{1}{2\pi}\sum_{n=1}^{\infty}r_2(n)\left(\frac{1}{(n+\a)^{1/2}(n+\b)^{1/2}}-\frac{1}{n}\right),
\end{align*} 	
where
\begin{align*}
L=\lim_{\nu\to 0}\left\{\frac{2^{1-2\nu}}{\pi}(\a-\b)^{\nu}\zeta(\nu+1)\mathfrak{B}(\nu+1)-\frac{1}{2\nu}
\left(\frac{\sqrt{\a}-\sqrt{\b}}{\sqrt{\a}+\sqrt{\b}}\right)^{\nu}\right\}.
\end{align*}
Using the Madhava-Gregory series for $\pi/4=\mathfrak{B}(1)$ and the well-known limit \cite[p.~16]{titch}
\begin{equation}\label{riepol}
\lim_{s\to 1}\left(\zeta(s)-\frac{1}{s-1}\right)=\gamma,
\end{equation}
we see that
\begin{align*}
L=\frac{\g}{2}+\frac{1}{2}\lim_{\nu\to 0}\frac{\left(\pi^{-1}2^{2-2\nu}(\sqrt{\a}+\sqrt{\b})^{2\nu}\mathfrak{B}(\nu+1)-1\right)}{\nu}.
\end{align*}
Employing L'Hopital's rule, \eqref{dbffunc}, $\mathfrak{B}(1)=\pi/4$ once again, and $\Gamma^{\prime}(1)=-\gamma$, upon simplifying, we arrive at
\begin{align*}
L=\frac{\g}{2}+\log\left(\frac{\sqrt{\a}+\sqrt{\b}}{2}\right)+\frac{2}{\pi}\mathfrak{B}'(1)
=\g+\log\left(\frac{\sqrt{\a}+\sqrt{\b}}{2}\right)+\frac{1}{2}\log\left(\frac{\pi}{2}\right)-\mathfrak{B}'(0).	
\end{align*}
 This proves \eqref{popgenrkeqnac2}.
\end{proof}

\begin{remark} When we let $\a\to\b^{+}$ in Theorem \ref{popgenrkac2},  use the facts that $I_{0}(0)=1$ and \cite[p.~20]{titch} $\zeta^{\prime}(0)=-\tf12\log(2\pi)$, and replace $\b$ by $x$, we obtain a result of Dixon and Ferrar \cite[eq.~(3.24)]{dixfer2}
\begin{equation}\label{DF}
2\sum_{n=1}^{\infty}r_2(n)K_0(2\pi\sqrt{nx})-\log(\tf12\pi x)-2\gamma+2\mathfrak{B}^{\prime}(0)=\df{1}{\pi x}+\df{1}{\pi}\sum_{n=1}^{\infty}r_2(n)\left\{\df{1}{x+n}-\df{1}{n}\right\}.
\end{equation}
\end{remark}

\begin{theorem}\label{popgenrkac4}
For \textup{Re}$(\sqrt{\a})\geq$ \textup{Re}$(\sqrt{\b})>0$,
\begin{align}\label{popgenrkeqnac4}
&\sum_{n=1}^{\infty}r_4(n)I_{0}(\pi(\sqrt{n\a}-\sqrt{n\b}))K_{0}(\pi(\sqrt{n\a}+\sqrt{n\b}))\nonumber\\
&=\frac{\a+\b}{4\pi^2(\a\b)^{3/2}}+\frac{\g}{2}+\frac{1}{2}\left(1-\frac{2}{3}\log 4+2\log\left(\sqrt{\a}+\sqrt{\b}\right)+\frac{6}{\pi^2}\zeta'(2)\right)\nonumber\\
&\quad+\frac{1}{4\pi^2}\sum_{n=1}^{\infty}r_4(n)\left\{\frac{2n+\a+\b}{(n+\a)^{3/2}(n+\b)^{3/2}}-\frac{2}{n^2}\right\}.
\end{align}
\end{theorem}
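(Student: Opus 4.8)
The plan is to specialize the analytically continued identity \eqref{popgenrkeqnac0} to $k=4$ and follow the same line of reasoning as in the proof of Theorem \ref{popgenrkac2}, the only new ingredient being the factorization of $\zeta_4$ recorded in \eqref{dsrk4}. Setting $k=4$ makes $\G(k/2)=\G(2)=1$, $\pi^{k/2}=\pi^2$, and $2^{k-1}=8$. The decisive simplification is that with $k=4$ the upper parameters of each Gauss hypergeometric function become $1-\tf{k}{2}=-1$, so the series terminates and $\pFq21{-1,-1}{1}{z}=1+z$ is merely a linear polynomial. This is what allows both the separated $n=0$ term and the summand to be reduced to closed algebraic form.

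Next I would dispose of the two routine simplifications. Writing $p=\sqrt{\a}$, $q=\sqrt{\b}$, the separated middle term equals $\dfrac{1}{8\pi^2 pq}\cdot\dfrac{(p+q)^2}{p^2q^2}\cdot\dfrac{2(p^2+q^2)}{(p+q)^2}$, in which the $(p+q)^2$ factors cancel, leaving $\dfrac{\a+\b}{4\pi^2(\a\b)^{3/2}}$, the first term on the right of \eqref{popgenrkeqnac4}. The identical manipulation with $\a,\b$ replaced by $n+\a,n+\b$ turns the $n$th summand into $\dfrac{1}{4\pi^2}r_4(n)\Bigl\{\dfrac{2n+\a+\b}{(n+\a)^{3/2}(n+\b)^{3/2}}-\dfrac{2}{n^2}\Bigr\}$, which is exactly the series appearing in \eqref{popgenrkeqnac4}.

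The main obstacle, as in Theorem \ref{popgenrkac2}, is the evaluation of the limit
\begin{equation*}
L=\lim_{\nu\to 0}\left(\frac{2^{-2\nu-1}(\a-\b)^{\nu}\G(\nu+2)}{\pi^2\G(\nu+1)}\zeta_4(\nu+2)-\frac{1}{2\nu}\left(\frac{\sqrt{\a}-\sqrt{\b}}{\sqrt{\a}+\sqrt{\b}}\right)^{\nu}\right).
\end{equation*}
Here I would substitute $\zeta_4(\nu+2)=8(1-4^{-1-\nu})\zeta(\nu+2)\zeta(\nu+1)$ from \eqref{dsrk4} and use $\G(\nu+2)/\G(\nu+1)=\nu+1$, so that the first term becomes $\tf{4}{\pi^2}\,4^{-\nu}(\a-\b)^{\nu}(\nu+1)(1-4^{-1-\nu})\zeta(\nu+2)\,\zeta(\nu+1)$. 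The delicacy is that $\zeta(\nu+1)$ has a simple pole at $\nu=0$, and its polar part must cancel against the $1/(2\nu)$ in the second term. Inserting the Laurent expansion $\zeta(\nu+1)=1/\nu+\gamma+O(\nu)$ together with $\zeta(2)=\pi^2/6$, one checks that the smooth prefactor takes the value $\pi^2/8$ at $\nu=0$, so the polar part of the first term is precisely $1/(2\nu)$ and the two poles indeed cancel. Extracting the finite part then requires the logarithmic derivative of the analytic factor $4^{-\nu}(\a-\b)^{\nu}(\nu+1)(1-4^{-1-\nu})\zeta(\nu+2)$ at $\nu=0$, which contributes the constant $1$ (from $(\nu+1)$), $-\tf23\log 4$ (from $4^{-\nu}$ together with $1-4^{-1-\nu}$), and $\tf{6}{\pi^2}\zeta'(2)$ (from $\zeta'(2)/\zeta(2)$), alongside $\gamma/2$ and a term $\tf12\log(\a-\b)$.

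Finally I would combine the logarithms. Since $\log(\a-\b)=\log(\sqrt{\a}-\sqrt{\b})+\log(\sqrt{\a}+\sqrt{\b})$ while the second term in $L$ supplies $-\tf12\log\bigl((\sqrt{\a}-\sqrt{\b})/(\sqrt{\a}+\sqrt{\b})\bigr)$, the $\log(\sqrt{\a}-\sqrt{\b})$ pieces cancel and what survives is $2\log(\sqrt{\a}+\sqrt{\b})$. Assembling these contributions yields $L=\tf{\gamma}{2}+\tf12\bigl(1-\tf23\log 4+2\log(\sqrt{\a}+\sqrt{\b})+\tf{6}{\pi^2}\zeta'(2)\bigr)$, the constant in \eqref{popgenrkeqnac4}, completing the proof. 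The one genuinely careful step is the bookkeeping of the $O(\nu)$ terms, since the finite part arises from the product of the simple pole of $\zeta(\nu+1)$ with the first-order Taylor data of the analytic prefactor.
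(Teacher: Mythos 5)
Your proposal is correct and follows essentially the same route as the paper: specialize \eqref{popgenrkeqnac0} to $k=4$, invoke \eqref{dsrk4} and $\pFq21{-1,-1}{1}{x}=1+x$, and evaluate the limit by cancelling the pole of $\zeta(\nu+1)$ against $1/(2\nu)$ and extracting the finite part (the paper phrases this via \eqref{riepol} and L'Hopital's rule, which is the same computation as your logarithmic-derivative bookkeeping). All the constants, including $-\tfrac23\log 4$, $\tfrac{6}{\pi^2}\zeta'(2)$, and the surviving $2\log(\sqrt{\a}+\sqrt{\b})$, check out.
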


\begin{proof}
Let $k=4$ in \eqref{popgenrkeqnac0} and use \eqref{dsrk4}. We note that
 $$\pFq21{-1, -1}{1}{x}=1+x.$$
 We will calculate the limit below in two stages.  In the first, we use \eqref{riepol} and the evaluation $\zeta(2)=\pi^2/6$. In the second, we  employ L'Hopital's rule and again use the evaluation $\zeta(2)=\pi^2/6$.  Accordingly,
\begin{align*}
&\lim_{\nu\to 0}\left\{\frac{(\a-\b)^{\nu}\G(\nu+2)}{2^{2\nu+1}\pi^2\G(\nu+1)}8(1-4^{-\nu-1})\zeta(\nu+2)\zeta(\nu+1)-\frac{1}{2\nu}
\left(\frac{\sqrt{\a}-\sqrt{\b}}{\sqrt{\a}+\sqrt{\b}}\right)^{\nu}\right\}\nonumber\\
&=\frac{\g}{2}+\frac{1}{2}\lim_{\nu\to 0}\frac{\left(8\pi^{-2}2^{-2\nu}(\nu+1)(1-4^{-\nu-1})(\sqrt{\a}+\sqrt{\b})^{2\nu}\zeta(\nu+2)-1\right)}{\nu}\nonumber\\
&=\frac{\g}{2}+\frac{1}{2}\left(1-\frac{2}{3}\log 4+2\log\left(\sqrt{\a}+\sqrt{\b}\right)+\frac{6}{\pi^2}\zeta'(2)\right).
\end{align*}
This gives \eqref{popgenrkeqnac4}.
\end{proof}

\begin{theorem}\label{popgenrkac6}
For \textup{Re}$(\sqrt{\a})\geq$ \textup{Re}$(\sqrt{\b})>0$,
\begin{align*}
&\sum_{n=1}^{\infty}r_6(n)I_{0}(\pi(\sqrt{n\a}-\sqrt{n\b}))K_{0}(\pi(\sqrt{n\a}+\sqrt{n\b}))\nonumber\\
&=\frac{3\a^2+2\a\b+3\b^2}{8\pi^3(\a\b)^{5/2}}+\frac{\g}{2}-\frac{\zeta(3)}{\pi^2}
+\log\left(\frac{\sqrt{\a}+\sqrt{\b}}{2}\right)+\frac{3}{4}+\frac{16}{\pi^3}\mathfrak{B}'(3)\nonumber\\
&\quad+\frac{1}{8\pi^3}\sum_{n=1}^{\infty}r_6(n)\left\{\frac{8n^2+8n\a+3\a^2+8n\b+2\a\b+3\b^2}{(n+\a)^{5/2}(n+\b)^{5/2}}
-\frac{8}{n^3}\right\}.
\end{align*}
\end{theorem}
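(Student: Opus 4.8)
The plan is to imitate the proof of Theorem \ref{popgenrkac2} line for line, now specializing the master identity \eqref{popgenrkeqnac0} to $k=6$ and substituting the Dirichlet series for $r_6(n)$, namely $\zeta_6(s)=16\zeta(s-2)\mathfrak{B}(s)-4\zeta(s)\mathfrak{B}(s-2)$. First I would record that the terminating hypergeometric function collapses to a polynomial, $\pFq21{-2,-2}{1}{z}=1+4z+z^2$. Writing $p=\sqrt{x+\a}$ and $q=\sqrt{x+\b}$ and setting $z=((p-q)/(p+q))^2$, this polynomial times the factor $(1/\sqrt{x+\a}+1/\sqrt{x+\b})^{4}=(p+q)^4/(pq)^4$ has its $(p+q)^4$ cancel, leaving $[(p+q)^4+4(p-q)^2(p+q)^2+(p-q)^4]/(pq)^{5}$. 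With $s=p+q$ and $d=p-q$ one has $s^2+d^2=2(2x+\a+\b)$ and $s^2d^2=(\a-\b)^2$, so the bracket equals $(s^2+d^2)^2+2s^2d^2=4(2x+\a+\b)^2+2(\a-\b)^2=2(8x^2+8x\a+8x\b+3\a^2+2\a\b+3\b^2)$.

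Carrying the constant $\G(3)/(\pi^3 2^5)=1/(16\pi^3)$ and evaluating at $x=0$ produces the isolated term $\frac{3\a^2+2\a\b+3\b^2}{8\pi^3(\a\b)^{5/2}}$, while the tail becomes $\frac{1}{8\pi^3}\sum_{n\ge1}r_6(n)\{\cdots\}$: the factor $2$ extracted from the bracket turns $1/(16\pi^3)$ into $1/(8\pi^3)$ and simultaneously turns the subtracted counterterm $2^{k-2}/n^{k/2}=16/n^3$ into $8/n^3$ inside the braces, matching the claimed series exactly. All convergence is inherited from \eqref{popgenrkeqnac0}, which already holds at $\nu=0$.

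The substantive step is the evaluation of the limit $L$ formed by pairing the $\zeta_6$ term with $-\frac{1}{2\nu}((\sqrt{\a}-\sqrt{\b})/(\sqrt{\a}+\sqrt{\b}))^{\nu}$, just as $L$ arose in Theorem \ref{popgenrkac2}. With $P(\nu)=2^{-2\nu-1}(\a-\b)^{\nu}(\nu+1)(\nu+2)/\pi^3$, the piece $-4P(\nu)\zeta(\nu+3)\mathfrak{B}(\nu+1)$ is analytic at $\nu=0$, and since $P(0)=1/\pi^3$ and $\mathfrak{B}(1)=\pi/4$ it contributes $-\zeta(3)/\pi^2$. The only pole comes from $16P(\nu)\zeta(\nu+1)\mathfrak{B}(\nu+3)=A(\nu)\zeta(\nu+1)$, where $A(\nu)=\tfrac{8}{\pi^3}2^{-2\nu}(\a-\b)^{\nu}(\nu+1)(\nu+2)\mathfrak{B}(\nu+3)$; using the special value $\mathfrak{B}(3)=\pi^3/32$ gives $A(0)=\tfrac12$, so its principal part $A(0)/\nu=1/(2\nu)$ cancels the pole of the $-1/(2\nu)$ term precisely.

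To extract the finite part I would expand $\zeta(\nu+1)=1/\nu+\g+O(\nu)$ (that is, \eqref{riepol}) together with $((\sqrt{\a}-\sqrt{\b})/(\sqrt{\a}+\sqrt{\b}))^{\nu}=1+\nu\log\tfrac{\sqrt{\a}-\sqrt{\b}}{\sqrt{\a}+\sqrt{\b}}+O(\nu^2)$, which leaves $\tfrac{\g}{2}+A'(0)-\tfrac12\log\tfrac{\sqrt{\a}-\sqrt{\b}}{\sqrt{\a}+\sqrt{\b}}$. Logarithmic differentiation of $A$ gives $A'(0)=-\log2+\tfrac12\log(\a-\b)+\tfrac34+\tfrac{16}{\pi^3}\mathfrak{B}'(3)$, and then $\a-\b=(\sqrt{\a}-\sqrt{\b})(\sqrt{\a}+\sqrt{\b})$ forces $\tfrac12\log(\a-\b)-\tfrac12\log\tfrac{\sqrt{\a}-\sqrt{\b}}{\sqrt{\a}+\sqrt{\b}}=\log(\sqrt{\a}+\sqrt{\b})$, which together with $-\log2$ yields $\log\tfrac{\sqrt{\a}+\sqrt{\b}}{2}$. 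Hence $L=\tfrac{\g}{2}-\tfrac{\zeta(3)}{\pi^2}+\log\tfrac{\sqrt{\a}+\sqrt{\b}}{2}+\tfrac34+\tfrac{16}{\pi^3}\mathfrak{B}'(3)$, the asserted constant. I expect the main obstacle to be the bookkeeping of this limit: one must track the pole cancellation and the $O(\nu)$ contributions simultaneously while disentangling $\log(\a-\b)$ into the clean $\log\tfrac{\sqrt{\a}+\sqrt{\b}}{2}$. Notably, unlike the $k=2$ case, the functional equation \eqref{dbffunc} is not needed here, since the surviving beta value is $\mathfrak{B}'(3)$ rather than $\mathfrak{B}'(1)$.
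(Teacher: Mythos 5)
Your proposal is correct and follows precisely the route the paper intends: the paper omits the proof of Theorem \ref{popgenrkac6}, stating only that it parallels that of Theorem \ref{popgenrkac2}, and your specialization of \eqref{popgenrkeqnac0} to $k=6$ — the collapse of $\pFq21{-2,-2}{1}{z}$ to $1+4z+z^2$, the algebraic reduction via $s^2+d^2$ and $s^2d^2$, and the limit $L$ with the pole of $16P(\nu)\zeta(\nu+1)\mathfrak{B}(\nu+3)$ cancelling $-1/(2\nu)$ via $\mathfrak{B}(3)=\pi^3/32$ — is exactly that adaptation, with all constants checking out. Your closing observation that \eqref{dbffunc} is not needed here (since $\mathfrak{B}'(3)$ survives directly rather than being converted as $\mathfrak{B}'(1)$ was to $\mathfrak{B}'(0)$ in the $k=2$ case) is also accurate.
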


\begin{theorem}\label{popgenrkac8}
For \textup{Re}$(\sqrt{\a})\geq$ \textup{Re}$(\sqrt{\b})>0$,
\begin{align*}
&\sum_{n=1}^{\infty}r_8(n)I_{0}(\pi(\sqrt{n\a}-\sqrt{n\b}))K_{0}(\pi(\sqrt{n\a}+\sqrt{n\b}))\nonumber\\
&=\frac{3(5\a^3+3\a^2\b+3\a\b^2+5\b^3)}{16\pi^4(\a\b)^{7/2}}+\frac{\g}{2}+\log\left(\frac{\sqrt{\a}
+\sqrt{\b}}{2}\right)+\frac{11}{12}+\frac{45}{\pi^4}\zeta'(4)\nonumber\\
&\quad+\frac{3}{16\pi^4}\sum_{n=1}^{\infty}r_8(n)\left\{\frac{P(n, \a, \b)}{(n+\a)^{7/2}(n+\b)^{7/2}}-\frac{16}{n^4}\right\},
\end{align*}
where $P(n, \a, \b)$ is a polynomial in $n$ (as well as in $\a$ and $\b$) given by
\begin{align*}
P(n, \a, \b):=16n^3+24n^2\a+18n\a^2+5\a^3+24n^2\b+12n\a\b+3\a^2\b+18n\b^2+3\a\b^2+5\b^3.
\end{align*}
\end{theorem}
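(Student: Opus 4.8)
The plan is to follow the template established in the proof of Theorem \ref{popgenrkac4}: specialize the master identity \eqref{popgenrkeqnac0} to $k=8$ and insert the Dirichlet series \eqref{dsrk8}. Writing $s=\nu+\tf{k}{2}=\nu+4$, so that $s-3=\nu+1$, the factor $\zeta_8(\nu+4)=16(1-2^{-3-\nu}+4^{-2-\nu})\zeta(\nu+4)\zeta(\nu+1)$ carries exactly one simple pole at $\nu=0$, coming from $\zeta(\nu+1)$, and this pole will cancel that of the subtracted term $-\tf{1}{2\nu}\bigl((\sqrt{\a}-\sqrt{\b})/(\sqrt{\a}+\sqrt{\b})\bigr)^{\nu}$ in \eqref{popgenrkeqnac0}. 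The right side then separates into three contributions: the $n=0$ term, which produces the leading algebraic expression; the limit term, which produces the constant; and the convergent series over $n\geq 1$.

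First I would dispose of the algebraic pieces. Since $k=8$, the terminating hypergeometric function in \eqref{popgenrkeqnac0} is
\begin{equation*}
\pFq21{-3,-3}{1}{z}=1+9z+9z^2+z^3.
\end{equation*}
Setting $z=\bigl((\sqrt{n+\a}-\sqrt{n+\b})/(\sqrt{n+\a}+\sqrt{n+\b})\bigr)^2$ and writing $u=\sqrt{n+\a}+\sqrt{n+\b}$, $v=\sqrt{n+\a}-\sqrt{n+\b}$, the first piece of the summand equals $N/\bigl((n+\a)(n+\b)\bigr)^{7/2}$ with $N=u^6+9u^4v^2+9u^2v^4+v^6$. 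Rewriting $N=(u^6+v^6)+9u^2v^2(u^2+v^2)$ in the symmetric variables $S=u^2+v^2=4n+2\a+2\b$ and $T=u^2v^2=(\a-\b)^2$ collapses it to $N=S(S^2+6T)$, and expanding gives $N=4\,P(n,\a,\b)$. Together with the coefficient $\G(4)/(\pi^4 2^{7})=3/(64\pi^4)$, this yields the prefactor $3/(16\pi^4)$ of the series, converts the tail $2^{6}/n^{4}$ into $16/n^4$, and, at $n=0$, produces $3(5\a^3+3\a^2\b+3\a\b^2+5\b^3)/(16\pi^4(\a\b)^{7/2})$.

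The hard part will be the evaluation of the constant, namely
\begin{equation*}
L=\lim_{\nu\to 0}\left\{\frac{2^{-2\nu-1}(\a-\b)^{\nu}\G(\nu+4)}{\pi^{4}\G(\nu+1)}\zeta_8(\nu+4)-\frac{1}{2\nu}\left(\frac{\sqrt{\a}-\sqrt{\b}}{\sqrt{\a}+\sqrt{\b}}\right)^{\nu}\right\}.
\end{equation*}
As in the $k=4$ case I would use \eqref{riepol} together with $\zeta(4)=\pi^4/90$ to verify that the regular factor multiplying the pole of $\zeta(\nu+1)$ equals $\tf12$ at $\nu=0$; this extracts the $\g/2$ term and reduces the remainder to a $0/0$ form, so that by L'Hopital's rule $L=\tf{\g}{2}+\tf12\,\tfrac{d}{d\nu}\log g(\nu)\big|_{\nu=0}$, where $g(\nu)=16\pi^{-4}2^{-2\nu}(\nu+3)(\nu+2)(\nu+1)(1-2^{-3-\nu}+4^{-2-\nu})(\sqrt{\a}+\sqrt{\b})^{2\nu}\zeta(\nu+4)$. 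The logarithmic derivative splits cleanly: the polynomial factor contributes $\tf13+\tf12+1=\tf{11}{6}$, the factors $(\sqrt{\a}+\sqrt{\b})^{2\nu}$ and $2^{-2\nu}$ contribute $2\log(\sqrt{\a}+\sqrt{\b})-2\log 2$, the factor $\zeta(\nu+4)$ contributes $90\,\zeta'(4)/\pi^4$, and the delicate point is that the logarithmic derivative of $1-2^{-3-\nu}+4^{-2-\nu}$ vanishes at $\nu=0$ since $\tf{\log 2}{8}-\tf{\log 4}{16}=0$. Halving these yields $L=\tf{\g}{2}+\log\bigl((\sqrt{\a}+\sqrt{\b})/2\bigr)+\tf{11}{12}+45\,\zeta'(4)/\pi^4$. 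Assembling the $n=0$ term, this constant, and the $n\geq 1$ series then gives the asserted identity; the argument is structurally identical to the proof of Theorem \ref{popgenrkac4}, the only additional bookkeeping arising from the cubic factor $(\nu+3)(\nu+2)(\nu+1)$ and the two-exponential factor in \eqref{dsrk8}.
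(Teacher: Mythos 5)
Your proposal is correct and follows exactly the route the paper intends: the paper omits the proof of Theorem \ref{popgenrkac8} precisely because it is "similar to that of Theorem \ref{popgenrkac4}," i.e., specialize \eqref{popgenrkeqnac0} to $k=8$, insert \eqref{dsrk8}, reduce the terminating $\pFq21{-3,-3}{1}{z}$ algebraically, and evaluate the $\nu\to 0$ limit via \eqref{riepol} and L'Hopital. Your detailed checks (the identity $N=S(S^2+6T)=4P(n,\a,\b)$, the prefactor $\G(4)/(\pi^4 2^7)=3/(64\pi^4)$, and the vanishing of the logarithmic derivative of $1-2^{-3-\nu}+4^{-2-\nu}$ at $\nu=0$) all verify, and the constant $\tf{\g}{2}+\log\bigl((\sqrt{\a}+\sqrt{\b})/2\bigr)+\tf{11}{12}+45\zeta'(4)/\pi^4$ comes out as stated.
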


As previously mentioned, letting $\a\to\b^{+}$ in Theorems \ref{popgenrkac4}--\ref{popgenrkac8}, we obtain analogues of Dixon and Ferrar's identity \eqref{DF}.
 We refrain from stating them explicitly since it is simple to derive them from the theorems above.

We now discuss one further interesting special case of \eqref{popgenrkeqnac}. Let $k=2m, 1\leq m\leq 4$. Substitute \eqref{dsrk2}--\eqref{dsrk8} in \eqref{popgenrkeqnac}, according as $m=1$, $2$, $3$, or $4$. Then let $\nu=-\tf12$; the required  formulas for Bessel functions with arguments $\pm\tf12$ can be found in Watson's \emph{Treatise} \cite[pp.~53, 79, 80]{watson-1966a}. We also need the formula \cite[p.~389, eq.~(106)]{prudv3}  $$\pFq21{a,a+\frac{1}{2}}{\frac{1}{2}}{x}=\frac{1}{2}\left\{(1+\sqrt{x})^{-2a}+(1-\sqrt{x})^{-2a}\right\}.$$
 Foregoing all further details, we arrive at the following theorem.

\begin{theorem}\label{goody}
For \textup{Re}$(\sqrt{\a})\geq$ \textup{Re}$(\sqrt{\b})>0$, and a positive integer $m$ such that $1\leq m\leq 4$,
\begin{align*}
&\sum_{n=1}^{\infty}\frac{r_{2m}(n)}{\sqrt{n}}e^{-\pi(\sqrt{n\a}+\sqrt{n\b})}\cosh(\pi(\sqrt{n\a}-\sqrt{n\b}))\nonumber\\
&=a_m+\pi(\sqrt{\a}+\sqrt{\b})+\frac{\G\left(m-\frac{1}{2}\right)}{2\pi^{m-\frac{1}{2}}}\left(\frac{1}{\a^{(2m-1)/2}}
+\frac{1}{\b^{(2m-1)/2}}\right)\nonumber\\
&\quad+\frac{\G\left(m-\frac{1}{2}\right)}{2\pi^{m-\frac{1}{2}}}\sum_{n=1}^{\infty}r_{2m}(n)
\left\{\frac{1}{(n+\a)^{(2m-1)/2}}+\frac{1}{(n+\b)^{(2m-1)/2}}-\frac{2}{n^{(2m-1)/2}}\right\},
\end{align*}
where
\begin{align*}
a_1&:=4\zeta\left(\frac{1}{2}\right)\mathfrak{B}\left(\frac{1}{2}\right),\nonumber\\
a_2&:=\frac{2}{\pi}\zeta\left(\frac{3}{2}\right)\zeta\left(\frac{1}{2}\right),\nonumber\\
a_3&:=\frac{3}{4\pi^2}\left(16\zeta\left(\frac{1}{2}\right)\mathfrak{B}\left(\frac{5}{2}\right)-4\zeta\left(\frac{5}{2}\right)
\mathfrak{B}\left(\frac{1}{2}\right)\right),\nonumber\\
a_4&:=\frac{30}{\pi^3}\left(\frac{9}{8}-\frac{1}{2^{5/2}}\right)\zeta\left(\frac{7}{2}\right)\zeta\left(\frac{1}{2}\right).
\end{align*}
\end{theorem}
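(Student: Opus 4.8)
The plan is to specialize the analytically continued master identity \eqref{popgenrkeqnac} at $k=2m$ and $\nu=-\tf12$. Since \eqref{popgenrkeqnac} is valid for $\textup{Re}(\nu)>-1$ by Section \ref{ac}, the choice $\nu=-\tf12$ is legitimate, and this is the only place where that continuation is essential. First I would simplify the left-hand side. Using the half-integer formulas $I_{-1/2}(z)=\sqrt{2/(\pi z)}\cosh z$ \cite[p.~80]{watson-1966a} and $K_{-1/2}(z)=K_{1/2}(z)$ given by \eqref{kspl}, the product collapses, since $(\sqrt{n\a}-\sqrt{n\b})(\sqrt{n\a}+\sqrt{n\b})=n(\a-\b)$, to
\begin{equation*}
I_{-1/2}(\pi(\sqrt{n\a}-\sqrt{n\b}))K_{-1/2}(\pi(\sqrt{n\a}+\sqrt{n\b}))=\frac{\cosh(\pi(\sqrt{n\a}-\sqrt{n\b}))\,e^{-\pi(\sqrt{n\a}+\sqrt{n\b})}}{\pi\sqrt{n}\sqrt{\a-\b}}.
\end{equation*}
Thus the entire left-hand side of \eqref{popgenrkeqnac} equals $(\pi\sqrt{\a-\b})^{-1}$ times the series in the statement, and the whole proof reduces to multiplying \eqref{popgenrkeqnac} through by $\pi\sqrt{\a-\b}$ and simplifying each surviving term.

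Next I would treat the hypergeometric factors. At $\nu=-\tf12$ the parameters become $\pFq21{\tf12-m,1-m}{\tf12}{z}$, which is exactly the shape covered by the quoted Prudnikov identity $\pFq21{a,a+\tf12}{\tf12}{x}=\tf12\{(1+\sqrt{x})^{-2a}+(1-\sqrt{x})^{-2a}\}$ with $a=\tf12-m$, giving $\tf12\{(1+\sqrt z)^{2m-1}+(1-\sqrt z)^{2m-1}\}$. With $\sqrt z=(\sqrt{n+\a}-\sqrt{n+\b})/(\sqrt{n+\a}+\sqrt{n+\b})$ one has $1\pm\sqrt z$ equal to $2\sqrt{n+\a}/(\sqrt{n+\a}+\sqrt{n+\b})$ and $2\sqrt{n+\b}/(\sqrt{n+\a}+\sqrt{n+\b})$. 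Combining this with the algebraic prefactors $((\sqrt{n+\a}-\sqrt{n+\b})/(\sqrt{n+\a}+\sqrt{n+\b}))^{-1/2}$, $(\tf{1}{\sqrt{n+\a}}+\tf{1}{\sqrt{n+\b}})^{2m-2}$ and $(n+\a)^{-1/2}(n+\b)^{-1/2}$, and using $(n+\a)-(n+\b)=\a-\b$, the $n$-th summand collapses to a constant multiple of $(n+\a)^{-(2m-1)/2}+(n+\b)^{-(2m-1)/2}$, while the subtracted tail $2^{k-2-2\nu}(\a-\b)^{\nu}/n^{\nu+k/2}$ collapses to the matching $2\,n^{-(2m-1)/2}$. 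The identical manipulation applied to the separated $n=0$ term produces the $\a^{-(2m-1)/2}+\b^{-(2m-1)/2}$ piece, and the elementary term $-\tf{1}{2\nu}(\tf{\sqrt\a-\sqrt\b}{\sqrt\a+\sqrt\b})^{\nu}$ becomes, after multiplication by $\pi\sqrt{\a-\b}$, exactly $\pi(\sqrt\a+\sqrt\b)$.

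It remains to identify the constants $a_m$, which I expect to be the main (if modest) obstacle. The surviving non-series term is the zeta contribution $\tf{2^{-2\nu-1}(\a-\b)^{\nu}\G(\nu+m)}{\pi^m\G(\nu+1)}\zeta_{2m}(\nu+m)$ evaluated at $\nu=-\tf12$, namely a multiple of $\zeta_{2m}(m-\tf12)$. Here $m-\tf12$ lies to the left of the abscissa of absolute convergence $m$ of $\zeta_{2m}$, so the defining series \eqref{zeta} is unavailable and one must instead invoke the explicit factorizations \eqref{dsrk2}--\eqref{dsrk8}, analytically continued, which express $\zeta_{2m}(s)$ through $\zeta(s),\zeta(s-1),\dots$ and $\mathfrak{B}(s)$ at $s=m-\tf12$; after multiplying by $\pi\sqrt{\a-\b}$ and using the factor $\G(m-\tf12)/\pi^{m-1/2}$, these evaluations yield precisely $a_1,\dots,a_4$. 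Finally, convergence of the regularized series in the statement follows from \eqref{ases} specialized to $\nu=-\tf12$, which shows each brace is $O(n^{-(2m+1)/2})$, together with the bound $r_{2m}(n)=O(n^{m-1+\e})$ from \eqref{rknbd}, giving a summand of order $n^{-3/2+\e}$. Assembling the four simplified terms completes the proof for each $m\in\{1,2,3,4\}$.
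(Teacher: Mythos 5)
Your proposal is correct and follows exactly the route the paper sketches: specialize the analytically continued identity \eqref{popgenrkeqnac} at $k=2m$, $\nu=-\tfrac12$, collapse the Bessel product via the half-integer formulas, evaluate the $_2F_1$ with the quoted Prudnikov identity, and identify the constants $a_m$ through the continued factorizations \eqref{dsrk2}--\eqref{dsrk8}. In fact you supply more detail than the paper, which ``foregoes all further details''; your computations (including the cancellation of $\sqrt{\a-\b}$ and the convergence estimate from \eqref{ases}) all check out.
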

\begin{remark} We can let $\a\to\b^{+}$ in Theorem \ref{goody} to obtain interesting special cases.\end{remark}


\section{Concluding Remarks and Further Possible Work}\label{conclusion}
At first sight,  Theorem \ref{popgenrk} does not appear remarkable.  However, as we have seen, several elegant and well-known transformations in the literature are special cases of Theorem \ref{popgenrk}.

 The summands in \eqref{popgenrkeqn} contain a product of Bessel functions $I_{\nu}(X)$ and $K_{\nu}(x)$. Dixon and Ferrar \cite{dixfer1933} obtained an integral representations for the product $I_{\mu}(X)K_{\nu}(x)$, where the orders $\mu$ and $\nu$  are not necessarily equal.  Therefore, perhaps there exists a more general transformation than that in Theorem \ref{popgenrk}.

\begin{center}
\textbf{Acknowledgements}
\end{center}

\noindent
The second author's research was partially supported by the DST grant ECR/2015/000070. The authors sincerely thank Dmitry Vasilenko, Vice-Rector for International Relations at St.~Petersburg State University of Economics, for sending them a list of the  $12$ publications of A. I. Popov. They also thank Arindam Roy for sending them a copy of \cite{popov1935}, Anton Lukyanenko for translating for them a section of that paper, and Jeremy Rouse for discussions on bounds for $r_k(n)$.


\end{document}